\newcommand{\eqdef}{:=}
\newcommand{\R}{\mathbb{R}}
\newcommand{\Prob}{\mathbf{Prob}}
\newcommand{\E}{\mathbf{E}}
\newcommand{\bU}{\mathbf U}
\newcommand{\bA}{\mathbf A}
\newcommand{\bM}{\mathbf M}
\newcommand{\bI}{\mathbf I}
\newcommand{\cS}{\mathcal{ S}}
\newcommand{\cP}{\mathcal{P}}
\newcommand{\cX}{\mathcal{X}}
\newcommand{\bX}{\mathbf{X}}
\DeclareMathOperator*{\argmin}{arg\,min}
\DeclareMathOperator*{\argmax}{arg\,max}
\newtheorem{example}{Example}
\theoremstyle{plain}
\newtheorem{theorem}{Theorem}
\newtheorem{proposition}[theorem]{Proposition}
\newtheorem{lemma}[theorem]{Lemma}
\newtheorem{assumption}[theorem]{Assumption}
\theoremstyle{definition}
\newtheorem{definition}[theorem]{Definition}
\newcommand*{\rom}[1]{\expandafter\@slowromancap\romannumeral #1@}
\newcommand{\bb}{\mathbf{b}}
\newcommand{\bh}{\mathbf{h}}
\newcommand{\qq}[1]{\quad #1 \quad}
\newcommand{\be}{\mathbf{e}}
\newcommand{\bx}{\mathbf{x}}
\newcommand{\by}{\mathbf{y}}
\newcommand{\bu}{\mathbf{u}}
\newcommand{\cA}{\mathcal{A}}
\newcommand{\cC}{\mathcal{C}}
\newcommand{\cO}{\mathcal{O}}
\newcommand{\cL}{\mathcal{L}}
\newcommand{\cW}{\mathcal{W}}
\newcommand{\extR}{\bar{\R}}
\title{Global Convergence of 
Arbitrary-Block
 Gradient Methods \\ for Generalized  Polyak-\L{}ojasiewicz Functions}
\author{Dominik Csiba\thanks{University of Edinburgh} \and Peter Richt\'{a}rik\thanks{KAUST and University of Edinburgh}}
\begin{document}

\maketitle

\begin{abstract} 
In this paper we introduce two novel generalizations of the theory for gradient descent type methods in the proximal setting. First, we introduce the proportion function, which we further use to analyze all  known (and many new) block-selection rules for block coordinate descent methods under a single framework. This framework includes randomized methods with uniform, non-uniform or even adaptive sampling strategies, as well as deterministic methods with batch, greedy or cyclic selection rules. We additionally introduce a novel block selection technique called greedy minibatches, for which we provide competitive convergence guarantees. Second, the  theory of strongly-convex optimization was recently generalized to a specific class of non-convex functions satisfying the so-called Polyak-\L{}ojasiewicz condition. To mirror this generalization in the weakly convex case, we introduce the Weak Polyak-\L{}ojasiewicz condition, using which we give global convergence guarantees for a class of non-convex functions previously not considered in theory. Additionally, we establish (necessarily somewhat weaker) convergence guarantees for an even larger class of non-convex functions satisfying a certain smoothness assumption only.

By combining the two abovementioned generalizations we recover the state-of-the-art convergence guarantees for a large class of previously known methods and setups as special cases of our general framework. Moreover, our frameworks allows for the derivation of new guarantees for many new combinations of methods and setups, as well as a large class of novel non-convex objectives.
The flexibility of our approach offers a lot of potential for future research, as a new block selection procedure will have a convergence guarantee for all objectives considered in our framework, while a new objective analyzed under our approach will have a whole fleet of block selection rules with convergence guarantees readily available.
\end{abstract}

{\footnotesize
\tableofcontents
}

\section{Introduction}

During the last decade, gradient-type methods have become the methods of choice for solving optimization problems of very large sizes arising in fields such as machine learning, data science, engineering, and visual computing. 

Consider the optimization problem 
\[\min_{\bx\in \R^n} f(\bx),\]
where $f:\R^n\to\R$ is a differentiable function. Assume that this problem has a nonempty set of global minimizers $\cX^*$ (clearly, $\nabla f(x^*) = 0$ for all $x^*\in \cX^*$).  It is well known \cite{bubeck2015convex} that if $f$ is $L$-smooth and $\mu$-strongly convex, where  $L\geq \mu >0$, then the gradient descent method $\bx^{k+1} = \bx^k - \tfrac{1}{L}\nabla f(\bx^k)$ for all $k\geq 0$ satisfies 
$\xi(\bx^{k+1}) \leq \left(1-\tfrac{\mu}{L}\right) \xi(\bx^k),$
where $\bx^*\in \cX^*$ and 
\begin{equation}\label{eq:xi}
\xi(\bx) \qq{\eqdef} f(\bx) - f(\bx^*),
\end{equation} is the {\em optimality gap function}. Motivated by the rise of nonconvex  models in fields such as image and signal processing  and deep learning, there is interest in studying the performance of gradient-type methods for nonconvex functions. 

As observed by Polyak in 1963 \cite{polyak1963gradient}, and recently popularized and further studied by Karimi, Nutini and Schmidt \cite{karimi2016linear} in the context of proximal methods,  proofs of linear convergence  rely on a certain {\em consequence} of strong convexity known as the {\em Polyak-\L{}ojasiewics} (PL) inequality. Since functions satisfying the PL inequality need not be convex, linear convergence of gradient methods to the global optimum extends beyond the realm of convex functions. 

The (strong) PL inequality can be written in the form
\begin{equation}\label{eq:s09h9hf3}
\tfrac{1}{2}\|\nabla f(\bx)\|^2 \qq{\geq} \mu \cdot \xi(\bx), \qquad \bx\in \R^n.
\end{equation}

We write $f\in \cS_{PL}(\mu)$ if $f$ satisfies \eqref{eq:s09h9hf3}. The PL inequality and methods based on it have been an inspiration for many researchers in recent years \cite{de2016big, lei2017nonconvex, gao2016ojasiewicz}. It is known that in order to guarantee $\xi(\bx^k) \leq \epsilon$, it suffices to take $k=\cO((L/\mu)\log(1/\epsilon))$.

The starting point of this paper is the realization that {\em while the PL inequality serves as a generalization of strong convexity, there is no equivalent generalization of (weak) convexity.} One of the key contributions of this paper is to remedy this situation by introducing the {\em weak PL inequality}:
\begin{equation} \label{eq:98dh8gh3d} 
\|\nabla f(\bx)\| \cdot \|\bx-\bx^*\| \qq{\geq} \sqrt{\mu} \cdot \xi(\bx), \qquad \bx\in \R^n.
\end{equation}
We write $f\in \cW_{PL}(\mu)$ if $f$ satisfies \eqref{eq:98dh8gh3d}.
If  $f$ is convex, then $f\in \cW_{PL}(1)$. Indeed, by convexity and Cauchy-Schwartz inequality, we have
\[\xi(\bx) \qq{=} f(\bx) - f(\bx^*) \qq{\leq} \langle \nabla f(\bx), \bx-\bx^* \rangle \qq{\leq} \|\nabla f(\bx)\| \cdot \|\bx-\bx^*\|.\]

However, $\cW_{PL}(1)$ contains nonconvex functions as well. As an example consider the function $f(x_1, x_2) = x_1^2 x_2^2$, for which it is straightforward to show that $f \in \cW_{PL}(1)$ and it is apparently nonconvex.  If we allow $0<\mu<1$, the inequality \eqref{eq:98dh8gh3d} becomes weaker, and holds for a larger family of functions still.

 
In this paper we  prove that $\xi(\bx^k)\leq \epsilon$ if $k\geq 2L R^2/\epsilon$, where $R$ is a uniform upper bound on $\|\bx^k-\bx^0\|$. Since gradient descent is a monotonic method, such a bound exists if, for instance, the level set $\{\bx \;:\; f(\bx) \leq f(\bx^0)\}$ is bounded. This result extends standard convergence result for gradient descent for convex functions to weak PL functions. 

\subsection{Contributions}

We now  briefly summarize the main contributions of this work.

\begin{enumerate}
\item[(i)]  We consider a large family of gradient type methods. The methods include block coordinate descent with arbitrary block selection rules, such as cyclic, greedy, randomized, adaptive and so on. Gradient descent arises as a special case when the active  block at each iteration consists of all coordinates. Also, we introduce a novel method called greedy minibatch descent which we analyze using our developed theory to prove convergence for it in various setups mentioned in the next point.
\item[(ii)] We extend all results (strong and weak PL inequality, algorithms and  complexity results)  to the proximal setup. That is, we consider {\em composite} optimization problems  of the  form \[\min_{x\in \R^n} F(x) \eqdef f(x)+g(x),\] where $f$ is a differentiable function, and $g$ is a simple (and possibly nonsmooth) function. For instance, the weak PL inequality \eqref{eq:98dh8gh3d}  arises as a special case of the new {\em proximal weak PL inequality} when $g=0$. The complexity results are the same: $\cO(\log(1/\epsilon))$ for strongly PL functions (see Section~\ref{sec:SPL}), $\cO(1/\epsilon)$ for weakly PL functions (see Section~\ref{sec:WPLg}), and $\cO(1/\epsilon \log(1/\epsilon))$ for general nonconvex functions (see Section~\ref{sec:general_non-convex_applications}). The specific rates can be found in Tables~\ref{tab:rates_smooth} and \ref{tab:rates_nonsmooth}. The definitions of the various symbols and constants appearing in the tables is given in the rest of the text. 

\begin{table}[ht]
\centering
\begin{tabular}{|l|c|c|c|}
\hline
{\bf block selection rule} & {\bf strongly PL} &{\bf weakly PL} & {\bf general nonconvex}  \\ \hline \hline
gradient descent & $\tfrac{\lambda_{\max}(\bM)}{\mu}\log(\tfrac{\xi(\bx^0)}{\epsilon})$ & $\tfrac{\lambda_{\max}(\bM)}{\rho(\bx^0) \epsilon}$ &  $\frac{\lambda_{\max}(\bM) \xi(\bx^0)}{\epsilon} \log(\frac{\xi(\bx^0)}{\epsilon})$ \\ \hline
uniform coordinate & $\tfrac{n \max_i \{M_{ii}\}}{\mu}\log(\tfrac{\xi(\bx^0)}{\epsilon})$ & $\tfrac{n \max_i \{M_{ii}\}}{\rho(\bx^0) \epsilon}$ &  $\frac{n \max_i \{M_{ii}\} \xi(\bx^0)}{\epsilon} \log(\frac{\xi(\bx^0)}{\epsilon})$ \\ \hline
importance coordinate & $\tfrac{\sum_{i=1}^n M_{ii}}{\mu}\log(\tfrac{\xi(\bx^0)}{\epsilon})$ & $\tfrac{\sum_{i=1}^n M_{ii}}{\rho(\bx^0) \epsilon}$ &  $\frac{\sum_{i=1}^n M_{ii} \xi(\bx^0)}{\epsilon} \log(\frac{\xi(\bx^0)}{\epsilon})$ \\ \hline
greedy coordinate & $\tfrac{\sum_{i=1}^n M_{ii}}{\mu}\log(\tfrac{\xi(\bx^0)}{\epsilon})$ & $\tfrac{\sum_{i=1}^n M_{ii}}{\rho(\bx^0) \epsilon}$ &  $\frac{\sum_{i=1}^n M_{ii} \xi(\bx^0)}{\epsilon} \log(\frac{\xi(\bx^0)}{\epsilon})$ \\ \hline
uniform minibatch & $\tfrac{1}{\mu \lambda_{\min}(\E[\bM_{[S]}^{-1}])}\log(\tfrac{\xi(\bx^0)}{\epsilon})$ & $\tfrac{1}{\rho(\bx^0) \lambda_{\min}(\E[\bM_{[S]}^{-1}]) \epsilon}$ &  $\frac{ \xi(\bx^0)}{\lambda_{\min}(\E[\bM_{[S]}^{-1}]) \epsilon} \log(\frac{\xi(\bx^0)}{\epsilon})$ \\ \hline
greedy minibatch & $\tfrac{1}{\mu \lambda_{\min}(\E[\bM_{[S]}^{-1}])}\log(\tfrac{\xi(\bx^0)}{\epsilon})$ & $\tfrac{1}{\rho(\bx^0) \lambda_{\min}(\E[\bM_{[S]}^{-1}]) \epsilon}$ &  $\frac{ \xi(\bx^0)}{\lambda_{\min}(\E[\bM_{[S]}^{-1}]) \epsilon} \log(\frac{\xi(\bx^0)}{\epsilon})$ \\ \hline
\end{tabular}
\caption{Iteration complexity guarantees for $\E[\xi(\bx^K)] \leq \epsilon$ in the smooth case ($g=0$).}
\label{tab:rates_smooth}
\end{table}

\begin{table}[ht]
\centering
\begin{tabular}{|l|c|c|c|}
\hline
{\bf block selection rule} & {\bf strongly PL} &{\bf weakly PL} & {\bf general nonconvex}  \\ \hline \hline
gradient descent & $\tfrac{\lambda_{\max}(\bM)}{\mu}\log(\tfrac{\xi(\bx^0)}{\epsilon})$ & $\tfrac{\lambda_{\max}(\bM)}{\rho(\bx^0) \epsilon}$ &  $\frac{\lambda_{\max}(\bM) \xi(\bx^0)}{\epsilon} \log(\frac{\xi(\bx^0)}{\epsilon})$ \\ \hline
uniform coordinate & $\tfrac{n\max_i\{M_{ii}\} }{\mu}\log(\tfrac{\xi(\bx^0)}{\epsilon})$ & $\tfrac{n \max_i \{M_{ii}\}}{\rho(\bx^0) \epsilon}$ &  $\frac{n \max_i \{M_{ii}\} \xi(\bx^0)}{\epsilon} \log(\frac{\xi(\bx^0)}{\epsilon})$ \\ \hline
greedy coordinate & $\tfrac{n\max_i\{M_{ii}\} }{\mu}\log(\tfrac{\xi(\bx^0)}{\epsilon})$ & $\tfrac{n \max_i \{M_{ii}\}}{\rho(\bx^0) \epsilon}$ &  $\frac{n \max_i \{M_{ii}\} \xi(\bx^0)}{\epsilon} \log(\frac{\xi(\bx^0)}{\epsilon})$ \\ \hline
uniform minibatch & $\tfrac{nL_{\tau}}{\tau\mu}\log(\tfrac{\xi(\bx^0)}{\epsilon})$ & $\tfrac{nL_{\tau}}{\rho(\bx^0) \tau\epsilon}$ &  $\frac{ \xi(\bx^0) n L_{\tau}}{ \tau\epsilon} \log(\frac{\xi(\bx^0)}{\epsilon})$ \\ \hline
greedy minibatch & $\tfrac{nL_{\tau}}{\tau\mu}\log(\tfrac{\xi(\bx^0)}{\epsilon})$ & $\tfrac{nL_{\tau}}{\rho(\bx^0) \tau\epsilon}$ &  $\frac{ \xi(\bx^0) n L_{\tau}}{ \tau\epsilon} \log(\frac{\xi(\bx^0)}{\epsilon})$ \\ \hline
\end{tabular}
\caption{Iteration complexity guarantees for $\E[\xi(\bx^K)] \leq \epsilon$  in the non-smooth case ($g\neq 0$).}
\label{tab:rates_nonsmooth}
\end{table}

To the best of our knowledge, all the rates are novel, except those for strongly PL functions for gradient descent and uniform and greedy coordinate descent, in both smooth (i.e., $g=0$) and non-smooth (i.e., $g\neq 0$)  cases.  These were already shown in \cite{karimi2016linear}. Even in these cases, our class of strongly PL functions is somewhat larger than that considered  in \cite{karimi2016linear} in the non-smooth case ($g\neq 0$).
\end{enumerate}

\subsection{Outline}

We first perform the our general analysis specified for smooth gradient descent in Section~\ref{sec:GD}. In Section~\ref{sec:general_setup} we introduce the general setup considered in the rest of the work. In Section~\ref{sec:proportion} we introduce the proportion function, which is a tool for the general analysis of block selection rules. In Sections~\ref{sec:SPL}, \ref{sec:WPLg} and \ref{sec:general_nonconvex} we establish the main theory for strongly PL, weakly PL, and general non-convex functions, respectively. Finally, in Section~\ref{sec:experiments} we perform numerical experiments confirming our theoretical findings.

\subsection{Notation}

We use boldface to denote a multi-dimensional object. As an example, we have a vector $\bx$, a matrix $\bX$, while a scalar entry of a vector $x_i$ has a normal typeset. By $[n]$ we denote the set $\{1, \dots, n\}$. $\|\bx\| = \langle \bx, \bx \rangle^{1/2} $ is the L2 norm, where $\langle \bx, \by \rangle = \sum_i x_i y_i$ is the standard inner product. refer to Table~\ref{tab:notation} in the appendix for a summary of frequently used notation.


\section{Gradient Descent}\label{sec:GD}

 We assume throughout this section that $f$ is $L$-smooth for some $L>0$: \begin{equation}\label{eq:L-smoothness} 
 f(\bx+\bh) \qq{\leq} f(\bx) + \langle \nabla f(\bx), \bh \rangle + \tfrac{L}{2}\|\bh\|^2, \qquad \bx,\bh\in \R^n.
 \end{equation}  
We shall write $f\in C^1(L)$. In addition to this assumption, in our analysis we consider several  classes of {\em nonconvex} objectives: PL functions, weak PL functions, and gradient dominated functions. 
 
 In this section we perform a novel analysis of the gradient descent\footnote{For simplicity, we consider gradient descent with fixed stepsize inversely proportional to the Lipschitz constant: $1/L$. While one can extend our results to other stepsize strategies using standard techniques, we avoid doing so as to present our results in a simple setting.}  method for minimizing $f$:
\begin{equation}\label{eq:GD}
\bx^{k+1} \qq{=} \bx^k - \tfrac{1}{L}\nabla f(\bx^k).
\end{equation}
for the above classes of nonconvex functions. As we shall show, for these classes of objectives gradient descent converges to the global minimizer. By focusing on the notoriously known gradient descent method first,  we illuminate some of the key insights of this paper without distractions from additional complications caused by the proximal setup and particularities of other algorithms, making the more general treatment in further sections  more easily digestable. 

A key role in the analysis is played by the {\em forcing function} associated with $f$, defined as
\begin{equation}\label{eq:mu-GD-smooth}
\mu(\bx) \qq{\eqdef} \frac{\|\nabla f(\bx)\|^2}{2 \xi(\bx)}, \qquad x\in \R^n/\cX^*.
\end{equation}
For any fixed value of this function, the smaller the  gradient $\|\nabla f(\bx)\|^2$ is, the smaller the optimality gap $\xi(\bx)$. In other words, small gradients force the optimality gap to  become small. The importance of this function is clear from the following simple lemma, which  says that the larger $\mu(\bx^k)$ is, the more reduction we get at iteration $k$ in the optimality gap. 
 
\begin{lemma}\label{lem:firstGD} Let $f\in C^1(L)$ and let $\{\bx^k\}_{k\geq 0}$ be the sequence of iterates produced by the gradient descent method \eqref{eq:GD}. As long as $\bx^k \notin \cX^*$, we have
\[\xi(\bx^{k+1}) \qq{\leq} \left(1-\tfrac{\mu(\bx^k)}{L}\right)\xi(\bx^k).\]
Moreover,  $\|\nabla f(\bx)\|^2 \leq 2L \xi(\bx)$ for all $\bx$.
\end{lemma}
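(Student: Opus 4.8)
The plan is to derive both inequalities directly from the $L$-smoothness assumption \eqref{eq:L-smoothness}, which is the only structural hypothesis available. First I would apply \eqref{eq:L-smoothness} at the point $\bx = \bx^k$ with the gradient step direction $\bh = -\tfrac{1}{L}\nabla f(\bx^k)$. This gives
\[
f(\bx^{k+1}) \;\leq\; f(\bx^k) + \langle \nabla f(\bx^k), -\tfrac{1}{L}\nabla f(\bx^k)\rangle + \tfrac{L}{2}\left\|\tfrac{1}{L}\nabla f(\bx^k)\right\|^2 \;=\; f(\bx^k) - \tfrac{1}{2L}\|\nabla f(\bx^k)\|^2.
\]
Subtracting $f(\bx^*)$ from both sides yields $\xi(\bx^{k+1}) \leq \xi(\bx^k) - \tfrac{1}{2L}\|\nabla f(\bx^k)\|^2$. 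Now I substitute the definition of the forcing function \eqref{eq:mu-GD-smooth}, namely $\|\nabla f(\bx^k)\|^2 = 2\mu(\bx^k)\xi(\bx^k)$, which is valid precisely because $\bx^k \notin \cX^*$ so that $\xi(\bx^k) > 0$ and the quotient is well-defined. This turns the bound into $\xi(\bx^{k+1}) \leq \xi(\bx^k) - \tfrac{\mu(\bx^k)}{L}\xi(\bx^k) = \left(1-\tfrac{\mu(\bx^k)}{L}\right)\xi(\bx^k)$, which is the first claim.

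For the second claim, I would again start from $f(\bx^{k+1}) \leq f(\bx^k) - \tfrac{1}{2L}\|\nabla f(\bx^k)\|^2$, but now use that $f(\bx^{k+1}) \geq f(\bx^*)$ since $\bx^*$ is a global minimizer. This immediately gives $f(\bx^*) \leq f(\bx^k) - \tfrac{1}{2L}\|\nabla f(\bx^k)\|^2$, i.e. $\tfrac{1}{2L}\|\nabla f(\bx^k)\|^2 \leq f(\bx^k) - f(\bx^*) = \xi(\bx^k)$, hence $\|\nabla f(\bx^k)\|^2 \leq 2L\xi(\bx^k)$. To get the stated inequality for \emph{all} $\bx$ rather than just the iterates, I note that the argument never used anything special about $\bx^k$: for an arbitrary $\bx$, define $\bx^+ = \bx - \tfrac{1}{L}\nabla f(\bx)$, apply \eqref{eq:L-smoothness} to get $f(\bx^+) \leq f(\bx) - \tfrac{1}{2L}\|\nabla f(\bx)\|^2$, and combine with $f(\bx^+) \geq f(\bx^*)$. (Alternatively, one can note $\|\nabla f(\bx)\|^2 = 2\mu(\bx)\xi(\bx)$ together with the implicit fact that the first part forces $\mu(\bx)/L \leq 1$ for the contraction to make sense, but the cleaner route is the direct one just described; the case $\bx \in \cX^*$ is trivial since both sides vanish.)

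There is no real obstacle here — both statements are one-line consequences of plugging the step size $1/L$ into the descent lemma. The only point requiring a small amount of care is making sure the forcing function $\mu(\bx^k)$ is well-defined in the first claim, which is exactly why the hypothesis $\bx^k \notin \cX^*$ appears; and making sure the second claim's ``for all $\bx$'' is justified by re-running the descent estimate at an arbitrary point rather than only along the trajectory. I expect the write-up to be just a handful of lines.
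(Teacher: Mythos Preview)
Your proposal is correct and follows essentially the same approach as the paper: apply $L$-smoothness with $\bh=-\tfrac{1}{L}\nabla f(\bx^k)$ to obtain $\xi(\bx^{k+1})\leq \xi(\bx^k)-\tfrac{1}{2L}\|\nabla f(\bx^k)\|^2$, then rewrite via the forcing function for the first claim and use $\xi(\bx^{k+1})\geq 0$ for the second. The paper phrases the second conclusion as ``$\mu(\bx)\leq L$ for all $\bx\notin\cX^*$'' (your parenthetical alternative), but this is exactly equivalent to your direct inequality, and your explicit remark that the argument works at an arbitrary $\bx$ (not just along the trajectory) is a nice clarification the paper leaves implicit.
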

\begin{proof} Let $\bh^k = -\tfrac{1}{L}\nabla f(\bx^k)$. Then
\begin{eqnarray*}
\xi(\bx^{k+1}) &\overset{\eqref{eq:GD}}{=}& \xi(\bx^k + \bh^k) 
\qq{\overset{\eqref{eq:xi}}{=}} f(\bx^k + \bh^k) - f(\bx^*)\\
&\overset{\eqref{eq:L-smoothness}}{\leq} & f(\bx^k) + \langle f(\bx^k), \bh^k \rangle + \tfrac{L}{2}\|\bh^k\|^2 - f(\bx^*) 
\qq{\overset{\eqref{eq:xi}}{=}} \xi(\bx^k) +  \langle f(\bx^k), \bh^k \rangle + \tfrac{L}{2}\|\bh^k\|^2 \\
&=& \xi(\bx^k)- \tfrac{1}{2L}\|\nabla f(\bx^k)\|^2 \qq{\overset{\eqref{eq:mu-GD-smooth}}{=}} (1-\tfrac{\mu(\bx^k)}{L})\xi(\bx^k).
\end{eqnarray*}
Since $\xi(\bx^{k+1})\geq 0$, it must be the case that $\mu(\bx) \leq L$ for all $\bx \notin \cX^*$.
\end{proof}

It is well known that gradient descent is monotonic: $\xi(\bx^{k+1})\leq \xi(\bx^k)$ for all $k$. Note that this property  follows from the second-to-last identity in the proof, and relies on the assumption of $L$-smoothness only.  If $f$ is $\mu$-strongly convex or, more generally, if $f\in \cS_{PL}(\mu)$, then $\mu(\bx) \geq \mu$ for all $\bx \notin \cX^*$, and Lemma~\ref{lem:firstGD} implies the linear rate $\xi(\bx^{k+1}) \leq \left(1-\tfrac{\mu}{L}\right)\xi(\bx^k)$. This result was shown already by Polyak~\cite{polyak1963gradient}.

\subsection{Weakly Polyak-\L{}ojasiewicz functions}

Consider now functions satisfying a weak version of the PL inequality. To the best of our knowledge, this is the first work where such functions are considered.

\begin{definition}[Weak Polyak-\L{}ojasiewicz functions] \label{def:weaklyPL_smooth}
 We say that $f:\R^n\to \R$ is a weak Polyak-\L{}ojasiewicz (WPL) function with parameter $\mu\geq 0$ if there exists $\bx^*\in \cX^*$ such that 
 \begin{equation} \label{eq:weak_PL_def} 
 \sqrt{\mu} \cdot \xi(\bx) \qq{\leq}  \|\nabla f(\bx)\| \cdot \|\bx-\bx^*\|, \qquad \bx\in \R^n.
\end{equation}
For simplicity, we write $f\in \cW_{PL}(\mu)$.
\end{definition}

Consider the Huber loss given by \[H(z) \qq{\eqdef} \begin{cases}
z^2 \quad &|z| < 1 \\
2|z| - 1 \quad & \mbox{otherwise},
\end{cases}\]
and the derived function $f$ given by $f(x_1, x_2) = H(x_1)H(x_2)$. It is straightforward to show from the definition that $f$ is smooth function for which $f \in \cW_{PL}(\mu)$ for some $\mu > 0$, while $f \notin \cS_{PL}(\mu)$ for all $\mu > 0$.

Note that all\footnote{By ``all'' we implicitly mean all functions for which the definition make sense. That is, differentiable and having a global minimizer $\bx^*$.} functions belong to $\cW_{PL}(0)$. As the next result shows, WPL functions admit a  lower bound on $\mu(\bx)$ which is proportional to $\xi(\bx)$ and inversely proportional to $\|\bx-\bx^*\|^2$. 

\begin{lemma}\label{lem:nhd89hg30did} If $f\in \cW_{PL}(\mu)$, then
\[\mu(\bx) \qq{\geq}  \frac{\mu \xi(\bx)}{2\|\bx-\bx^*\|^2}, \qquad \bx\in \R^n/\cX^*.\]
\end{lemma}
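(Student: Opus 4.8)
The plan is to unpack the definition of the forcing function $\mu(\bx)$ from \eqref{eq:mu-GD-smooth} and insert the WPL inequality \eqref{eq:weak_PL_def} directly. By definition, $\mu(\bx) = \tfrac{\|\nabla f(\bx)\|^2}{2\xi(\bx)}$ for $\bx \in \R^n/\cX^*$, so it suffices to produce a lower bound on $\|\nabla f(\bx)\|^2$. From \eqref{eq:weak_PL_def} we have $\sqrt{\mu}\,\xi(\bx) \leq \|\nabla f(\bx)\|\cdot\|\bx-\bx^*\|$, and since $\bx \notin \cX^*$ implies $\|\bx - \bx^*\| > 0$ (as $\bx^*$ is the specific minimizer from the definition; one should note $\xi(\bx) > 0$ here too so the division is legitimate), we may rearrange to get $\|\nabla f(\bx)\| \geq \tfrac{\sqrt{\mu}\,\xi(\bx)}{\|\bx-\bx^*\|}$.

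Squaring this (both sides are nonnegative) yields $\|\nabla f(\bx)\|^2 \geq \tfrac{\mu\,\xi(\bx)^2}{\|\bx-\bx^*\|^2}$. Substituting into the definition of $\mu(\bx)$ gives
\[
\mu(\bx) \;=\; \frac{\|\nabla f(\bx)\|^2}{2\xi(\bx)} \;\geq\; \frac{\mu\,\xi(\bx)^2}{2\xi(\bx)\,\|\bx-\bx^*\|^2} \;=\; \frac{\mu\,\xi(\bx)}{2\|\bx-\bx^*\|^2},
\]
which is exactly the claimed bound. The only subtlety worth a sentence in the writeup is the handling of edge cases: if $\bx \notin \cX^*$ but $\xi(\bx) = 0$ (which cannot happen if $\bx^*$ is a global minimizer and $\bx$ is not) — this is ruled out, so $\xi(\bx) > 0$ and all divisions are valid. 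There is essentially no obstacle here; it is a one-line algebraic manipulation of the two definitions, and the main thing to be careful about is just that $\|\bx - \bx^*\| \neq 0$ on $\R^n/\cX^*$ so that rearranging \eqref{eq:weak_PL_def} is justified, and that squaring preserves the inequality since both sides are nonnegative.

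I would present this as a short direct proof with the chain of (in)equalities annotated by references to \eqref{eq:mu-GD-smooth} and \eqref{eq:weak_PL_def}, closing immediately. No induction, no case analysis beyond the trivial well-definedness remark is needed.
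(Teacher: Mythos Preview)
Your proof is correct and essentially identical to the paper's: both unpack the definition $\mu(\bx) = \|\nabla f(\bx)\|^2/(2\xi(\bx))$, apply the WPL inequality \eqref{eq:weak_PL_def} (squared) to lower-bound the numerator by $\mu\,\xi(\bx)^2/\|\bx-\bx^*\|^2$, and simplify. The paper presents this as a single chain of annotated (in)equalities without the edge-case commentary, but the argument is the same.
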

\begin{proof} We have
\[
\mu(\bx) \qq{\overset{\eqref{eq:mu-GD-smooth}}{=}}  \frac{\|\nabla f(\bx)\|^2}{2 \xi(\bx)} \qq{\overset{\eqref{eq:weak_PL_def} }{\geq}}  \frac{\mu \xi^2(\bx)/\|\bx-\bx^*\|^2}{2\xi(\bx) }  \qq{=}  \frac{\mu \xi(\bx)}{2\|\bx-\bx^*\|^2}.
\]
\end{proof}

Several basic properties of WPL functions are summarized in Appendix~\ref{Appendix-WPL}. Combining Lemma~\ref{lem:firstGD} and Lemma~\ref{lem:nhd89hg30did}, we get the recursion

\begin{equation}\label{eq:recursion}
\xi(\bx^{k+1}) \qq{\leq} \left( 1-  \frac{\mu \xi(\bx^k)}{2L\|\bx^k-\bx^*\|^2} \right) \xi(\bx^k).
\end{equation}

The next lemma will be useful in the analysis of this recursion.

\begin{lemma} \label{lem:onestepbound_xi}
Let $\{\alpha^t\}_{t=0}^k$ and $\{\beta^t\}_{t=0}^k$ be two sequences of positive numbers satisfying the recursion \begin{equation} \label{eq:lem:onestepbound}
\alpha^{t+1} \qq{\leq} \left(1 - \alpha^t \beta^t\right)\alpha^t.
\end{equation} Then for all $k\geq 0$ we have the bound \[\alpha^k \qq{\leq} \frac{\alpha^0}{1 + \alpha^0\sum_{t=0}^{k-1}\beta^t}.\]
\end{lemma}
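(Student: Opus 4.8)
The plan is to prove the bound by induction on $k$, using the recursion \eqref{eq:lem:onestepbound} to pass from $k$ to $k+1$. The base case $k=0$ is immediate since the claimed bound reads $\alpha^0 \leq \alpha^0/(1+0) = \alpha^0$. For the inductive step it is cleaner to work with reciprocals: since all the $\alpha^t$ and $\beta^t$ are positive and \eqref{eq:lem:onestepbound} gives $\alpha^{t+1} \leq \alpha^t$, the sequence $\{\alpha^t\}$ is positive and nonincreasing, so dividing is legitimate and the direction of inequalities is preserved.

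The key computation is the following. From $\alpha^{t+1} \leq (1-\alpha^t\beta^t)\alpha^t$ we get
\[
\frac{1}{\alpha^{t+1}} \qq{\geq} \frac{1}{(1-\alpha^t\beta^t)\alpha^t} \qq{=} \frac{1}{\alpha^t}\cdot\frac{1}{1-\alpha^t\beta^t} \qq{\geq} \frac{1}{\alpha^t}(1+\alpha^t\beta^t) \qq{=} \frac{1}{\alpha^t} + \beta^t,
\]
where the middle inequality is the elementary estimate $\tfrac{1}{1-u} \geq 1+u$ valid for $u<1$ (here $u = \alpha^t\beta^t$, which is $<1$ because $\alpha^{t+1}>0$). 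Telescoping this from $t=0$ to $t=k-1$ yields $\tfrac{1}{\alpha^k} \geq \tfrac{1}{\alpha^0} + \sum_{t=0}^{k-1}\beta^t$, and inverting gives exactly the claimed bound
\[
\alpha^k \qq{\leq} \frac{1}{\tfrac{1}{\alpha^0} + \sum_{t=0}^{k-1}\beta^t} \qq{=} \frac{\alpha^0}{1 + \alpha^0\sum_{t=0}^{k-1}\beta^t}.
\]

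The only point that needs a little care — and the main (mild) obstacle — is justifying that all quantities being inverted are strictly positive and that $\alpha^t\beta^t < 1$ for every $t$ in the range, so that the bound $\tfrac{1}{1-u}\geq 1+u$ applies and no sign flips occur. Both follow from the stated positivity of the sequences together with $\alpha^{t+1}>0$: if $\alpha^t\beta^t \geq 1$ then $(1-\alpha^t\beta^t)\alpha^t \leq 0$, contradicting $\alpha^{t+1}>0$. (One could alternatively present this as a direct induction on the statement itself, carrying the monotonicity $\alpha^{t+1}\leq\alpha^t$ along; the telescoping argument above is essentially the same proof written more transparently.) Everything else is routine algebra.
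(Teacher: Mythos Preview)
Your proof is correct and follows essentially the same approach as the paper: establish the increment inequality $\tfrac{1}{\alpha^{t+1}} - \tfrac{1}{\alpha^t} \geq \beta^t$, telescope, and invert. The only cosmetic difference is that the paper derives the increment via $\tfrac{\alpha^t-\alpha^{t+1}}{\alpha^{t+1}\alpha^t} \geq \tfrac{\alpha^t-\alpha^{t+1}}{(\alpha^t)^2} \geq \beta^t$ (using monotonicity of $\alpha^t$), whereas you take reciprocals of the recursion directly and apply $\tfrac{1}{1-u}\geq 1+u$; both are equivalent algebraic routes to the same telescoping step.
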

\begin{proof}
As $\alpha^t$ and $\beta^t$ are positive numbers, we have $\alpha^{t+1} \leq \alpha^t$ for all $t$ using \eqref{eq:lem:onestepbound}. Observe that \[\frac{1}{\alpha^{t+1}} - \frac{1}{\alpha^t} \qq{=} \frac{\alpha^t - \alpha^{t+1}}{\alpha^{t+1}\alpha^{t}} \qq{\geq} \frac{\alpha^{t} - \alpha^{t+1}}{(\alpha^t)^2} \qq{\stackrel{\eqref{eq:lem:onestepbound}}{\geq}} \beta^t,\]
which we can recursively used to show \[\frac{1}{\alpha^{k}} \qq{\geq} \frac{1}{\alpha^{k-1}} + \beta^{k-1} \qq{\geq} \dots \qq{\geq} \frac{1}{\alpha^{0}}+ \sum_{t=0}^{k-1}\beta^t.\]
We get the result by inverting the last equation.
\end{proof}

By applying the above lemma to recursion \eqref{eq:recursion}, we get a global convergence result for gradient descent applied to a WPL function. 

\begin{theorem} Let $f\in \cC^1(L)$, and let $\{\bx^k\}_{k\geq 0}$ be the sequence of iterates produced by the gradient descent method \eqref{eq:GD}. Assume $f\in \cW_{PL}(\mu)$ for $\mu>0$. Then for  all $k\geq 1$ we have
\begin{equation}\label{eq:bound98698y}
\xi(\bx^{k}) \qq{\leq} \frac{\xi(\bx^0)}{1+\xi(\bx^0) \tfrac{\mu}{2L}\sum_{t=0}^{k-1} \tfrac{1}{\|\bx^t-\bx^*\|^2}} \qq{\leq} \frac{2L}{\mu}\cdot \frac{1}{\sum_{t=0}^{k-1} \frac{1}{\|\bx^t-\bx^*\|^2}} . 
\end{equation}
\end{theorem}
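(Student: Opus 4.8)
The plan is to apply Lemma~\ref{lem:onestepbound_xi} directly to the recursion \eqref{eq:recursion} and then simplify. First I would set $\alpha^t \eqdef \xi(\bx^t)$ and $\beta^t \eqdef \frac{\mu}{2L\|\bx^t-\bx^*\|^2}$. The sequences $\{\alpha^t\}$ are nonnegative (they are optimality gaps), and strictly positive as long as $\bx^t\notin\cX^*$; and $\{\beta^t\}$ is strictly positive since $\mu>0$ and $\bx^t\neq\bx^*$ whenever $\bx^t\notin\cX^*$. The recursion \eqref{eq:recursion}, obtained by chaining Lemma~\ref{lem:firstGD} (one-step descent) with Lemma~\ref{lem:nhd89hg30did} (the WPL lower bound on $\mu(\bx^k)$), is precisely of the form \eqref{eq:lem:onestepbound}. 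Invoking Lemma~\ref{lem:onestepbound_xi} then yields
\[\xi(\bx^k) \qq{\leq} \frac{\xi(\bx^0)}{1+\xi(\bx^0)\sum_{t=0}^{k-1}\tfrac{\mu}{2L\|\bx^t-\bx^*\|^2}} \qq{=} \frac{\xi(\bx^0)}{1+\xi(\bx^0)\tfrac{\mu}{2L}\sum_{t=0}^{k-1}\tfrac{1}{\|\bx^t-\bx^*\|^2}},\]
which is the first inequality in \eqref{eq:bound98698y}.

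For the second inequality, I would use the elementary bound $\frac{a}{1+ab}\leq \frac1b$ valid for $a,b>0$: dropping the $1$ in the denominator gives $\frac{\xi(\bx^0)}{\xi(\bx^0)\tfrac{\mu}{2L}\sum_{t}\tfrac{1}{\|\bx^t-\bx^*\|^2}} = \frac{2L}{\mu}\cdot\frac{1}{\sum_{t=0}^{k-1}\tfrac{1}{\|\bx^t-\bx^*\|^2}}$. This requires the sum to be nonzero, which holds as long as some $\bx^t\notin\cX^*$ for $t\le k-1$; if instead $\bx^t\in\cX^*$ for some $t\le k$, then monotonicity (Lemma~\ref{lem:firstGD}) forces $\xi(\bx^k)=0$ and both bounds hold trivially (interpreting the right-hand sides as $+\infty$).

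There is essentially no obstacle here: the theorem is a mechanical consequence of the two preceding lemmas and the auxiliary recursion lemma. The only point requiring a word of care is the boundary case where an iterate lands exactly on $\cX^*$ (so that $\mu(\bx^k)$ in \eqref{eq:mu-GD-smooth} is undefined and the recursion \eqref{eq:recursion} terminates); this is handled by the monotonicity remark above. I would therefore state the proof in two lines — apply Lemma~\ref{lem:onestepbound_xi} to \eqref{eq:recursion}, then discard the additive $1$ — with a parenthetical remark covering the degenerate case.
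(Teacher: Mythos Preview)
Your proposal is correct and mirrors the paper's own argument exactly: the paper derives \eqref{eq:recursion} from Lemmas~\ref{lem:firstGD} and~\ref{lem:nhd89hg30did}, then applies Lemma~\ref{lem:onestepbound_xi} with $\alpha^t=\xi(\bx^t)$ and $\beta^t=\tfrac{\mu}{2L\|\bx^t-\bx^*\|^2}$, and finally remarks that the second inequality follows by dropping the additive $1$ in the denominator. Your extra sentence handling the degenerate case $\bx^t\in\cX^*$ is a nice touch the paper omits.
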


The second inequality is obtained from the first by neglecting the additive constant 1 in the denominator.

 By monotonicity, all iterates of gradient descent stay in the level set $\cL_0\eqdef \{\bx\in \R^n \;:\; f(\bx) \leq f(\bx^0)\}$. If  this set is bounded, then $R\eqdef  \max_{\bx\in \cL_0} \|\bx-\bx^*\| < +\infty$, and we have $\|\bx^k-\bx^*\| \leq R$ for all $k$. In this case, the bound \eqref{eq:bound98698y} implies
\[\xi(\bx^k) \qq{\leq} \frac{2L R^2}{\mu k}.\]

\subsection{Gradient dominated functions}

We now consider a new class of (not necessarily convex) functions.  To the best of our knowledge, this class was not considered in optimization before.

\begin{definition} We say that function $f:\R^n\to \R$ is {$\varphi$-gradient dominated} if there exists a  function  $\varphi:\R_+ \to \R_+$ such that $\varphi(0)=0$, $\lim_{t\to 0} \varphi(t) = 0$ and  \begin{equation} \label{eq:nd09un09s} \xi(\bx) \qq{\leq} \varphi(\|\nabla f(\bx)\|), \qquad \bx\in \R^n.\end{equation}
\end{definition}

The above definition essentially says that for any sequence $\{\bx^k\}$ (not necessarily related to iterates of gradient descent) such that $\|\nabla f(\bx^k)\|\to 0$, we must have $f(\bx^k)\to f(\bx^*)$. In particular, if $f$ has multiple minimizers, all must have the same function value.

As an example of the function $\varphi$, we might consider any function of the form $\varphi(t) = c \cdot |t|^p$, where $c > 0$ and $p > 0$. The specific choice of $p = 2$ was already considered before in \cite{reddi2016stochastic}.

\begin{theorem} Assume that $f\in \cC^1(L)$  is $\varphi$-gradient dominated.
Pick $\epsilon>0$ and let 
\begin{equation}\label{eq:K} 
k\qq{\geq} \frac{2L \xi(\bx^0)}{\epsilon}\log\left(\frac{\xi(\bx^0)}{\varphi(\epsilon)}\right).
\end{equation} Then
$\min \{\xi(\bx^t)  \;:\; t=0,1,\dots,k\} \leq \varphi(\epsilon)$.
\end{theorem}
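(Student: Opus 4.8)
The plan is to combine the monotonicity of gradient descent with the standard telescoping argument for the sum of squared gradient norms, and then invoke the gradient-domination hypothesis to convert a small gradient into a small optimality gap.

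First I would argue by contradiction: suppose $\xi(\bx^t) > \varphi(\epsilon)$ for all $t = 0, 1, \dots, k$. By Lemma~\ref{lem:firstGD} we have the per-step decrease $\xi(\bx^{t+1}) \leq \xi(\bx^t) - \tfrac{1}{2L}\|\nabla f(\bx^t)\|^2$; telescoping this over $t=0,\dots,k-1$ gives $\tfrac{1}{2L}\sum_{t=0}^{k-1}\|\nabla f(\bx^t)\|^2 \leq \xi(\bx^0) - \xi(\bx^k) \leq \xi(\bx^0)$, hence $\min_t \|\nabla f(\bx^t)\|^2 \leq 2L\xi(\bx^0)/k$. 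This already shows gradients become small, but to control $\varphi$ applied to the gradient norm (and because $\varphi$ need not be monotone), it is cleaner to instead use the contradiction hypothesis directly inside the telescoping. Indeed, under the assumption $\xi(\bx^t) > \varphi(\epsilon)$ for all relevant $t$, and since $f$ is $\varphi$-gradient dominated we get $\varphi(\|\nabla f(\bx^t)\|) \geq \xi(\bx^t) > \varphi(\epsilon)$. Combined with the per-step decrease $\xi(\bx^{t+1}) \leq (1 - \tfrac{\mu(\bx^t)}{L})\xi(\bx^t)$ and $\mu(\bx^t) = \|\nabla f(\bx^t)\|^2/(2\xi(\bx^t))$, we obtain $\xi(\bx^{t+1}) \leq \xi(\bx^t) - \tfrac{1}{2L}\|\nabla f(\bx^t)\|^2$.

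Now the key quantitative step: I want a uniform lower bound on $\|\nabla f(\bx^t)\|^2$ that is proportional to $\xi(\bx^t)$. From $\varphi(\|\nabla f(\bx^t)\|) \geq \xi(\bx^t)$ alone one cannot directly extract this without monotonicity assumptions on $\varphi$, so the intended route is probably different: use $\xi(\bx^{t+1}) \leq (1 - \tfrac{\mu(\bx^t)}{L})\xi(\bx^t)$ together with a lower bound $\mu(\bx^t) \geq \epsilon/(2\xi(\bx^0)) \cdot (\text{something})$. More concretely, as long as $\xi(\bx^t) > \varphi(\epsilon)$, the gradient domination inequality $\xi(\bx^t) \leq \varphi(\|\nabla f(\bx^t)\|)$ forces $\|\nabla f(\bx^t)\| > \epsilon$ \emph{provided} $\varphi$ is such that $\varphi(s) \leq \varphi(\epsilon) \Rightarrow s \leq \epsilon$ in the relevant range — and since $\varphi(0)=0$, $\lim_{t\to0}\varphi(t)=0$, for the contradiction we can take the infimum and note $\|\nabla f(\bx^t)\|$ must stay bounded away from $0$; the clean statement is that $\|\nabla f(\bx^t)\| \geq \epsilon$ cannot hold for too many steps because each such step decreases $\xi$ by at least $\tfrac{\epsilon^2}{2L}$, contradicting $\xi \geq 0$. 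That gives $\min_t \|\nabla f(\bx^t)\| < \epsilon$ after at most $2L\xi(\bx^0)/\epsilon^2$ steps, which is not quite the claimed $\tfrac{2L\xi(\bx^0)}{\epsilon}\log(\xi(\bx^0)/\varphi(\epsilon))$; so the actual argument must instead exploit the \emph{multiplicative} (linear-rate-style) decrease $\xi(\bx^{t+1}) \leq (1-\tfrac{\mu(\bx^t)}{L})\xi(\bx^t)$ with $\mu(\bx^t)$ bounded below by a fixed $\mu_0 = \Theta(\epsilon/\xi(\bx^0))$, yielding geometric decay $\xi(\bx^k) \leq (1 - \mu_0/L)^k \xi(\bx^0)$ and hence $\xi(\bx^k) \leq \varphi(\epsilon)$ once $k \geq \tfrac{L}{\mu_0}\log(\xi(\bx^0)/\varphi(\epsilon))$.

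So the decisive lemma I would prove is: while $\xi(\bx^t) > \varphi(\epsilon)$, one has $\mu(\bx^t) \geq \tfrac{\epsilon^2}{2\varphi(\|\nabla f(\bx^t)\|)} \cdot \tfrac{1}{(\cdots)}$ — more carefully, $\mu(\bx^t) = \tfrac{\|\nabla f(\bx^t)\|^2}{2\xi(\bx^t)} \geq \tfrac{\|\nabla f(\bx^t)\|^2}{2\xi(\bx^0)}$ by monotonicity of $\xi(\bx^t)$ down the iterates, and then $\|\nabla f(\bx^t)\|^2 \geq \epsilon^2$ using that $\xi(\bx^t) > \varphi(\epsilon)$ forces the gradient above $\epsilon$ (here is where the precise structural property of $\varphi$ is used; for $\varphi(t)=c|t|^p$ it is immediate, and the general case follows since $\xi(\bx^t) > \varphi(\epsilon)$ and $\xi(\bx^t) \le \varphi(\|\nabla f(\bx^t)\|)$ give $\varphi(\|\nabla f(\bx^t)\|) > \varphi(\epsilon)$). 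Thus $\mu(\bx^t) \geq \tfrac{\epsilon^2}{2\xi(\bx^0)} =: \mu_0$, and Lemma~\ref{lem:firstGD} gives $\xi(\bx^{t+1}) \leq (1 - \tfrac{\mu_0}{L})\xi(\bx^t)$ for every such $t$. Iterating, $\xi(\bx^k) \leq (1-\tfrac{\mu_0}{L})^k \xi(\bx^0) \leq e^{-k\mu_0/L}\xi(\bx^0)$, which is $\leq \varphi(\epsilon)$ as soon as $k \geq \tfrac{L}{\mu_0}\log(\xi(\bx^0)/\varphi(\epsilon)) = \tfrac{2L\xi(\bx^0)}{\epsilon^2}\log(\xi(\bx^0)/\varphi(\epsilon))$. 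The main obstacle is reconciling the exact exponent in \eqref{eq:K} — the statement has $\epsilon$, not $\epsilon^2$, in the denominator — which suggests the authors define $\varphi(\epsilon)$ or the bound slightly differently, or that the per-step bound they use is $\xi(\bx^{t+1}) \leq \xi(\bx^t) - \tfrac{\epsilon}{2L}\xi(\bx^t)$ rather than with $\epsilon^2$; I would check the precise normalization of the gradient-domination inequality against \eqref{eq:nd09un09s} and adjust $\mu_0$ accordingly, but the skeleton — monotonicity, uniform lower bound on the forcing function $\mu(\bx^t)$ while the gap is large, geometric decay, logarithmic iteration count — is the crux.
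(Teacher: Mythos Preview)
Your skeleton — contradiction, uniform lower bound on the forcing function $\mu(\bx^t)$, geometric decay via Lemma~\ref{lem:firstGD}, logarithmic iteration count — is exactly the paper's argument. The one place you diverge, and the source of both your monotonicity worry and your $\epsilon$-versus-$\epsilon^2$ puzzle, is the quantity on which you case-split. You assume $\xi(\bx^t) > \varphi(\epsilon)$ for all $t$ and then try to push that back through $\varphi$ to obtain a gradient lower bound; the paper instead splits directly on the \emph{squared gradient norm}. Either $\|\nabla f(\bx^t)\|^2 \leq \epsilon$ for some $t\in\{0,\dots,k-1\}$, in which case gradient domination \eqref{eq:nd09un09s} gives $\xi(\bx^t) \leq \varphi(\|\nabla f(\bx^t)\|)$ and we are done; or $\|\nabla f(\bx^t)\|^2 > \epsilon$ for every such $t$, which by \eqref{eq:mu-GD-smooth} and the monotonicity $\xi(\bx^t)\leq\xi(\bx^0)$ yields $\mu(\bx^t) > \epsilon/(2\xi(\bx^0))$ with no need to invert $\varphi$ at all. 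Lemma~\ref{lem:firstGD} then gives
\[
\xi(\bx^k) \;\leq\; \Bigl(1-\tfrac{\epsilon}{2L\xi(\bx^0)}\Bigr)^k \xi(\bx^0) \;\leq\; \exp\!\Bigl(-\tfrac{\epsilon k}{2L\xi(\bx^0)}\Bigr)\xi(\bx^0),
\]
and \eqref{eq:K} makes the right-hand side at most $\varphi(\epsilon)$.

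So the $\epsilon$ (rather than $\epsilon^2$) in \eqref{eq:K} is not a normalization quirk: $\epsilon$ is being used as a threshold on $\|\nabla f\|^2$, not on $\|\nabla f\|$. Once you split there, the lower bound on $\mu$ is $\epsilon/(2\xi(\bx^0))$ and the iteration count matches exactly. (You are right that the first branch still tacitly needs something like monotonicity of $\varphi$ to pass from $\|\nabla f(\bx^t)\|^2 \leq \epsilon$ to the stated conclusion $\xi(\bx^t)\leq\varphi(\epsilon)$; the paper glosses over this, in effect treating $\varphi$ as nondecreasing, which holds for all the examples $\varphi(t)=c\,t^p$ they mention.)
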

\begin{proof} If $\|\nabla f(\bx^t)\|^2 \leq \epsilon$ for some $k=0,1,\dots,k-1$, we are done by applying \eqref{eq:nd09un09s}. Otherwise, $\|\nabla f(\bx^t)\|^2 > \epsilon$ for all $t=0,1,\dots,k-1$. Then in view of \eqref{eq:mu-GD-smooth}, $\mu(\bx^t) > \epsilon/(2\xi(\bx^t))$ for all such $t$. By monotonicity, $\xi(\bx^{t+1}) \leq \xi(\bx^t)$ for all $t$, whence $\mu(\bx^t) > \epsilon/(2\xi(\bx^0)).$ 
By applying Lemma~\ref{lem:firstGD}, we get 
\[\xi(\bx^{t+1}) \qq{\leq} \left(1-\frac{\epsilon}{2L\xi(\bx^0)}\right)\xi(\bx^t), \qquad t=0,1,\dots,k-1.\] 
By unrolling the recurrence, and using the bound $(1- s)^{1/s} \leq \exp(-1)$ (which holds for $0< s \leq 1$),  we get
\[\xi(\bx^{k}) \qq{\leq} \left(1-\frac{\epsilon}{2L\xi(\bx^0)}\right)^k\xi(\bx^0) \qq{\leq} \exp\left(-\frac{\epsilon k}{2L\xi(\bx^0)}\right)\xi(\bx^0) \qq{\overset{\eqref{eq:K}}{\leq}} \varphi(\epsilon).\]
\end{proof}

\subsection{Brief literature review}

The original gradient descent method was developed by Cauchy \cite{cauchy1847methode} and it has seen a lot of development ever since. This development is documented in detail in \cite{nesterov2013introductory, bubeck2015convex}. In the recent years, a version of gradient descent called coordinate descent was developed. The first developments of coordinate descent are due to \cite{leventhal2010randomized} and it was first analyzed for general convex objectives by Nesterov in \cite{Nesterov:2010RCDM}.

An important part of the coordinate descent is its ability to work with arbitrary block selection strategies. In the seminal work of Nesterov \cite{Nesterov:2010RCDM}, there were three strategies introduced, which are known as coordinate descent with uniform probabilities, coordinate descent with importance sampling, and greedy coordinate descent. The first two strategies fall into the family of randomized strategies. These were further developed in \cite{UCDC, NSync, PCDM, ESO, AdaSDCA, SDNA, ISM}. The third selection rule is a deterministic strategy similar in nature to batch gradient descent and it was shown to be superior over randomized methods in terms of iteration complexity in \cite{nutini2015coordinate}.

The  Polyak-\L{}ojasiewicz condition was first introduced by Boris Polyak in \cite{polyak1963gradient}. It was revived recently in \cite{karimi2016linear} and applied to modern optimization approaches. Since then, multiple papers used the condition to develop new approaches \cite{de2016big, lei2017nonconvex, gao2016ojasiewicz}. Gradient dominated functions were recently considered in \cite{reddi2016stochastic}.

Lastly, we note that our framework considers the non-accelerated version of coordinate descent methods, although they play a key role in modern theory. If needed, the acceleration can be achieved non-directly by using approaches as proposed in \cite{lin2015universal} or \cite{frostig2015regularizing}. We leave the accelerated counterpart of this framework to future work. 

\section{General Setup} \label{sec:general_setup}

In this section we  move beyond the simplified setup considered in the previous section and introduce the setting considered in this paper in its full generality. Our general treatment differs from that in Section~\ref{sec:GD} in several ways. 

First, we consider the {\em composite} optimization problem \begin{equation} \label{eq:prox_problem}
\min_{x\in \R^n} \left\{ F(\bx) \qq{\eqdef} f(\bx) + g(\bx)\right\}, 
\end{equation}
where $f$ is assumed to be smooth, and $g$ is a simple (possibly nonconvex and nonsmooth) separable function. Second, we $f$ is assumed to be smooth in a slightly more general sense than $L$-smoothness of Section~\ref{sec:GD}. Third, we go beyond gradient descent and consider a large family of first order methods which include randomized, cyclic, adaptive and greedy coordinate descent, in serial and adaptive settings. 

In the following, we will refer to the problem \eqref{eq:prox_problem} with $g = 0$ as the smooth case and otherwise as the non-smooth case.

\subsection{Smoothness and separability} \label{sec:smoothness}

We assume, that $f$ is $\bM$-smooth, which is formalized by the following assumption:
\begin{assumption} \label{ass:Msmoothness}
We say that a function $f:\R^n \rightarrow \R$ is $\bM$-smooth, if there exists a positive definite matrix $\bM \in \R^{n \times n}$ such that \begin{equation} \label{eq:Msmoothness}
f(\bx + \bh) \qq{\leq} f(\bx) + \langle \nabla f(\bx), \bh \rangle + \tfrac{1}{2} \langle \bM \bh, \bh \rangle, \qquad  \bx,\bh \in \R^n.
\end{equation}
\end{assumption}
In the non-smooth case ($g \neq 0$), we will without loss of generality assume $\bM$ to be a multiple of the identity matrix; specifically $\bM := L \bI$,  where $\bI$ is the identity matrix. If \eqref{ass:Msmoothness} holds for some $\bM$, we can always replace $\bM$ by $L\bI$,   where $L = \lambda_{\max}(\bM)$. It is easy to verify, that if a function is $\bM$-smooth, it is also $L\bI$-smooth. Note that in some cases we might choose $L$ to be smaller. This will be in detail explained in Section~\ref{sec:proportion_nonsmooth}.

For simplicity, we will use the notation with $\bM$ also in the non-smooth case, but we will always treat it as the diagonal matrix $L \bI$.

In addition to smoothness of $f$, we assume that the function $g$ is separable, which is defined as follows:
\begin{assumption} \label{ass:separability}
We say that a function $g: \R^n \rightarrow \extR \eqdef \R \cup \{+\infty\}$ is separable, if there exist $n$ scalar functions $g_1, \dots, g_n:\R^n\to \extR $, such that \begin{equation} \label{eq:separability}
g(\bx) \qq{=} \sum_{i=1}^n g_i(x_i).
\end{equation}
\end{assumption}
Note that the function $g$ is treated as the non-smooth part of the problem (e.g., L1 norm, box constraints, and so on). When we refer to a smooth problem, we assume the setup with $g = 0$, while all the other setups are referred as non-smooth problems.

The problem described in \eqref{eq:prox_problem} is encountered in many areas, ranging from machine learning and signal processing to biology and beyond. We believe it does not need to be motivated further, as it was already considered in a lot of previous works.

\subsection{Masking vectors and matrices}

 Let $\bx \in \R^n$ be an arbitrary vector and let $\bX \in \R^{n\times n}$ be an arbitrary matrix. We will need to index vectors and matrices by subsets of coordinates $\emptyset \neq S \subseteq [n]$. The indexing has two distinct forms. By $\bx_S$ we denote the $|S|$-dimensional vector constructed by taking the entries of $\bx$ with indices in $S$, while the notation $\bx_{[S]}$ is used to zero out every entry of $\bx$ not appearing in $S$ without changing its length. We have similar notation for matrices, where $\bX_{S}$ denotes the $|S| \times |S|$ matrix of the entries with both column and row indices in $S$, while $\bX_{[S]}$ is used for the matrix $\bX$ with entries zeroes out outside of columns and rows with indices in $S$.  As a rule of thumb, the subscript $S$ changes the dimensions of the object, while the subscript $[S]$ maintains its dimensions.
 
 To illustrate the notation,  consider the following example.
 
 \begin{example}  Let $\bx\in \R^3$, $\bX\in \R^{3\times 3}$ and $S = \{1,3\}$. Then 
\[\bx = \left[\begin{array}{c}
1 \\ 2 \\ 3
\end{array} \right]  \qquad \Rightarrow \qquad \bx_{S} = \left[ \begin{array}{c}
1 \\ 3\end{array}\right], \qquad \bx_{[S]} = \left[ \begin{array}{c}
1 \\ 0 \\ 3\end{array}\right] \]
and
\[\bX = \left[ \begin{array}{ccc}
1 & 4 & 7 \\
2 & 5 & 8 \\
3 & 6 & 9
\end{array}\right] \qquad \Rightarrow \qquad \bX_{S} = \left[ \begin{array}{cc}
1 & 7 \\
3 & 9
\end{array}\right], \qquad \bX_{[S]}= \left[ \begin{array}{ccc}
1 & 0 & 7 \\
0 & 0 & 0 \\
3 & 0 & 9
\end{array}\right].\]
\end{example}

\subsection{Algorithm}

We now propose and analyze a wide class of block descent algorithms for solving \eqref{eq:prox_problem}. For a non-empty block of coordinates $S \subseteq [n]$, we define\begin{equation} \label{def:US}
U_S(\bx, \bu) \qq{\eqdef} \langle (\nabla f(\bx))_{[S]}, \bu \rangle + \frac{1}{2}\bu^\top \bM_{[S]} \bu + \sum_{i \in S} \left[ g_i(x_i + u_i) - g_i(x_i) \right].
\end{equation}

We assume that finding a minimizer of $U_S(\bx, \bh)$ in $\bh$ is cheap (e.g., there exists a closed form solution, or an efficient algorithm). Given an iterate $\bx^k$, in iteration $k$ of our method we select a block $S_k\subseteq [n]$ of active coordinates, according to an {\em arbitrary block selection procedure} $\cP_k$, and subsequently minimize $U_{S_k}(\bx^k,\bh)$ in $\bh$. The result is denoted $\bu^k$.  Due to the structure of problem \eqref{def:US}, the minimizer of $U_{S_k}$ does not depend on $u_i$ for $i\notin S_k$. Hence, only the active coordinates  $\bu^k_i$ for  $i\in S_k$ are relevant, and we set $\bx^{k+1} = \bx^k + \bu^k_{[S_k]}$. Equivalently, using other alternative notation, we can write this as
\[\bx^{k+1}_i \qq{=} \begin{cases} \bx^k_i + \bu^k_{i}, &\qquad i\in S_k,\\
\bx^k_i, & \qquad i\notin S_k,\end{cases}\]
or
\[\bx^{k+1}_{S_k} = \bx^k_{S_k} + \bu^k_{S_k}, \qquad \bx^{k+1}_{[n]/S_k} = \bx^k_{[n]/S_k}.\]

This is Algorithm~\ref{alg:general}.

\begin{algorithm}
\caption{Proximal Arbitrary-Block Descent Method}
\label{alg:general}
\begin{algorithmic}
\STATE \textbf{Input:} Initial iterate $\bx^{0}\in {\rm dom}( g)$, arbitrary block selection procedures $\{\mathcal{P}_k\}_{k=0}^{K-1}$
\FOR{$k = 0, \dots, K-1$}
\STATE Pick a non-empty subset of coordinates $S_k \subset [n]$ using the procedure $\mathcal{P}_k$
\STATE Compute $\bu^k \in \argmin_{\bu} \{U_{S_k}(\bx^{k}, \bu)\}$
\STATE Update $\bx^{k+1} = \bx^{k} + \bu^k_{[S_k]}$
\ENDFOR
\STATE Output $\bx^K$
\end{algorithmic}
\end{algorithm}
Observe, that in the smooth case we have \begin{equation} \label{eq:US_min_smooth}
\bu^k_{S_k} \qq{=} - \bM_{S_k}^{-1} \nabla_{S_k} f(\bx^k),
\end{equation}
where $\nabla_S f(\bx) \eqdef (\nabla f(\bx))_S$. The iteration can be computed as a solution of a linear system of a size $|S_k| \times |S_k|$, which is very cheap for small $|S_k|$. In the non-smooth case, the iterate does not have a closed-form solution in general  
but can be solved fast for a lot of forms of $g$, e.g., box constraints or L1 norm. Since we assume $\bM=L\bI$, we can write
\begin{equation} \label{eq:US_min_nonsmooth}
\bu^k_{S_k} \qq{=} \argmin_{\bu \in \R^{|S_k|}} \bigg\{ \langle \nabla_S f(\bx^k), \bu \rangle + \frac{L}{2}\|\bu\|^2 + \sum_{i \in S_k} g_i(x_i + u_i) \bigg\}.
\end{equation}

For simplicity, assume $\bM = L \bI$ for some constant $L>0$. If  $g=0$,  and we always pick $S_k = [n]$, then Algorithm~\ref{alg:general} reduces to gradient descent, considered in Section~\ref{sec:GD}. If $g\neq 0$, and we always pick $S_k = [n]$, then Algorithm~\ref{alg:general} reduces to proximal gradient descent. On the other hand, if we always pick $|S_k|=1$, then we obtain coordinate descent ($g=0$) or proximal coordinate descent ($g\neq 0$). The selection procedure $\cP_k$ may be set to choose the coordinates in a cyclic manner, greedily, randomly according to any (fixed or  evolving) probability law, and even adaptively to the entire history of the iterative process. There are many other possibilities between the two extremes of always selecting $|S_k|=1$ and $|S_k|=n$. Such methods can be considered block coordinate descent methods, subspace descent methods, or parallel coordinate descent methods (as the updates to individual coordinates can  be performed in parallel). We stress that unlike all other methods considered in the literature,  in our method we allow for the block selection procedure $\cP_k$ to be {\em arbitrary}, without any restrictions whatsover. 

By removing these restrictions, we allow for several new possibilities in the sampling procedure. These are: 1) the block selection procedure might change from iteration to iteration, allowing for adaptive strategies as \cite{AdaSDCA}, 2) the procedure might depend on previous iterations, which opens up the possibility of cyclic and other similar selections, and 3) the procedure does not have to be randomized, which allows for greedy selection procedures \cite{nutini2015coordinate}.

In all our convergence results we shall enforce several key common assumptions, together with some additional assumptions. In order to avoid repeating the common core, we shall summarize them here.

\begin{assumption}[Common Assumptions] \label{ass:non-convex_setup}
Let $f : \R^n \rightarrow \R$ be an $\bM$-smooth \eqref{eq:Msmoothness} function, let $g:\R^n \rightarrow \extR$ be separable \eqref{eq:separability}, and let $F: \R^n \rightarrow \extR$ be function defined using $f$ and $g$ as in \eqref{eq:prox_problem}. Assume $F$ has a global minimizer  $\bx^*$, such that $F(\bx^*) > -\infty$. Let $\bx^0\in {\rm dom}(g)$ be an initial point and let the sequence $\{\bx^k\}_{k=1}^K$ be generated using Algorithm~\ref{alg:general}, where  $\{S_k\}_{k=0}^{K-1}$ is an arbitrary sequence of non-empty (possibly random) subsets of $[n]$.
\end{assumption}

\subsection{Forcing function}

As before, let  us define the {\em optimality gap function}
\begin{equation} \label{eq:optimality_gap}
\xi(\bx) \qq{\eqdef} F(\bx) -  F(\bx^*),
\end{equation}
where $\bx^*$ is a minimizer of $F$. Observe that $\xi(\bx) \geq 0$ with equality only if $\bx \in \cX^* \eqdef \argmin_{\bx} F(\bx)$. 

We now extend the definition of the forcing function to the proximal setting. 

\begin{definition}[Forcing function: proximal version] \label{def:GPL_function}
Let 
\begin{equation} \label{eq:lambda_ih8d} 
\lambda(\bx) \qq{\eqdef} - L \cdot \min_{\by \in \R^n}\left\{ \langle \nabla f(\bx), \by \rangle + \frac{L}{2}\|\by\|^2 + g(\bx + \by) - g(\bx) \right\},
\end{equation} 
where $L$ is chosen such that \eqref{eq:Msmoothness} holds with $\bM = L\bI$. Specific choices of $L$ are explained in detail in Section~\ref{sec:proportion_nonsmooth}.
The non-negative function  
\begin{equation} \label{eq:GPL_function}
\mu(\bx) \qq{\eqdef}  \frac{\lambda(\bx)}{\xi(\bx)}  
\end{equation} is the {\em proximal forcing function}.  The domain of $\bx$ is ${\rm dom}(g) / \cX^*$. 
\end{definition}

Pick any $\bx\in {\rm dom}(g)/\cX^*$. The minimum in \eqref{eq:lambda_ih8d} is non-positive since setting $\by = \mathbf{0}$ gives a zero value. Hence, $\lambda(\bx)\geq 0$. Since the denominator in \eqref{eq:GPL_function} is   positive, $\mu(\bx)$ is always nonnegative. 

 In the smooth case ($g = 0$) we have \begin{equation} \label{eq:GPL_function_smooth}
\mu(\bx) \qq{=} \frac{\|\nabla f (\bx)\|^2}{2 \xi(\bx)},
\end{equation}
which is simply the forcing function \eqref{eq:mu-GD-smooth}.

\subsection{Proportion function}
We introduce one more notion, which we call the \textit{proportion function}. This function plays an important role in our theory.

\begin{definition} \label{def:proportion_function}
Let \begin{equation} \label{eq:set_Xalpha_proportion}
\cX \qq{\eqdef} \left\{\bx \in \R^n ~:~ \lambda(\bx) \neq 0 \right\}.
\end{equation} 
The \textit{proportion function} is defined by \begin{equation} \label{eq:proportion_function}
\theta(S,\bx) \qq{\eqdef} \frac{- \min_{\bu \in \R^n} \bU_S(\bx,\bu)}{\lambda(\bx)}
\end{equation}
for all $S \in \cS$ and $\bx \in \cX$. For $\bx \in \cX$, we set $\theta(S, \bx) = 0$. 
\end{definition}
Similarly as in the definition of the forcing  function, both of the minimizations in \eqref{eq:proportion_function} are non-positive, as $\bu = 0$ and $\by = 0$ gives zero value in the numerator and denominator, respectively.
Also observe that in the smooth case ($g = 0$) we have \begin{equation} \label{eq:proportion_function_smooth}
\theta(S,\bx) \qq{=} \frac{ (\nabla_S f(\bx))^\top (\bM_{S})^{-1} \nabla_S f(\bx)}{\|\nabla f(\bx)\|^2}.
\end{equation}
Note that the matrix $(\bM_{S})^{-1}$ exists since all principal submatrices of a positive definite matrix are also positive definite. A more detailed treatment of the proportion function can be found in Section~\ref{sec:proportion}.

Also note that $\cX$ \eqref{eq:set_Xalpha_proportion} might differ from $\cX^*$ in the case of local minimizers.

\subsection{Generic descent lemma}

We now formulate a simple but  important descent lemma which bounds the progress gained by a single iteration of Algorithm~\ref{alg:general}. Our bound applies to arbitrary block selection rules, and  will enable us to prove global convergence results for Algorithm~\ref{alg:general} for new classes of nonconvex functions.

\begin{lemma} \label{lem:onestepbound}

Let $\bx^{k+1}$ be the next iterate of Algorithm~\ref{alg:general} generated from $\bx^k$ by picking a nonempty set of coordinates $S_k\subseteq [n]$. Then  \begin{equation} \label{eq:onestepbound}
\xi(\bx^{k+1}) \qq{\leq} \big[1 - \theta(S_k,\bx^k) \cdot \mu(\bx^k)\big] \cdot \xi(\bx^k).
\end{equation}
Applying this repeatedly, for all $K\geq 1$ we obtain the estimate
 \begin{equation} \label{eq:onestepbound_global}
\xi(\bx^K) \qq{\leq} \left(\prod_{k=0}^{K-1}\left[1 -  \theta(S_k,\bx^{k}) \cdot \mu(\bx^k)\right]\right) \xi(\bx^0).
\end{equation}
\end{lemma}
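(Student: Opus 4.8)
The plan is to prove the one-step bound \eqref{eq:onestepbound} first, and then obtain \eqref{eq:onestepbound_global} by a trivial induction. The heart of the argument is the definition of $U_S$ in \eqref{def:US} together with the $\bM$-smoothness assumption \eqref{eq:Msmoothness}. First I would fix $k$, write $S=S_k$, $\bu=\bu^k$ for brevity, and start from the update rule $\bx^{k+1}=\bx^k+\bu_{[S]}$. Applying $\bM$-smoothness of $f$ with $\bh=\bu_{[S]}$, and then adding and subtracting $g(\bx^k)$ and using the separability of $g$ to replace $g(\bx^{k+1})-g(\bx^k)=g(\bx^k+\bu_{[S]})-g(\bx^k)$ by $\sum_{i\in S}[g_i(x^k_i+u_i)-g_i(x^k_i)]$, one recognizes exactly the expression
\[
F(\bx^{k+1}) \qq{\leq} F(\bx^k) + \langle (\nabla f(\bx^k))_{[S]}, \bu\rangle + \tfrac12 \bu^\top \bM_{[S]}\bu + \sum_{i\in S}\big[g_i(x^k_i+u_i)-g_i(x^k_i)\big] \qq{=} F(\bx^k) + U_S(\bx^k,\bu),
\]
where the inner product $\langle \nabla f(\bx^k),\bu_{[S]}\rangle$ equals $\langle (\nabla f(\bx^k))_{[S]},\bu\rangle$ because $\bu_{[S]}$ is supported on $S$, and similarly $\bu_{[S]}^\top \bM \bu_{[S]} = \bu^\top \bM_{[S]}\bu$. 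Subtracting $F(\bx^*)$ from both sides yields $\xi(\bx^{k+1}) \leq \xi(\bx^k) + U_S(\bx^k,\bu)$.

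Next, since $\bu^k$ is by construction a minimizer of $U_S(\bx^k,\cdot)$, we have $U_S(\bx^k,\bu^k) = \min_{\bu} U_S(\bx^k,\bu) = -\lambda(\bx^k)\,\theta(S,\bx^k)$ by the definition \eqref{eq:proportion_function} of the proportion function (valid when $\bx^k\in\cX$; the degenerate case $\lambda(\bx^k)=0$, i.e.\ $\bx^k\notin\cX$, makes $\theta(S,\bx^k)=0$ and the minimum of $U_S$ is nonpositive, so the bound still holds and in fact forces $\bx^{k+1}=\bx^k$ via monotonicity). Then I would substitute $\lambda(\bx^k) = \mu(\bx^k)\,\xi(\bx^k)$ from \eqref{eq:GPL_function}, giving
\[
\xi(\bx^{k+1}) \qq{\leq} \xi(\bx^k) - \theta(S_k,\bx^k)\,\mu(\bx^k)\,\xi(\bx^k) \qq{=} \big[1-\theta(S_k,\bx^k)\mu(\bx^k)\big]\xi(\bx^k),
\]
which is \eqref{eq:onestepbound}.

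Finally, \eqref{eq:onestepbound_global} follows by multiplying the one-step inequalities for $k=0,1,\dots,K-1$; this telescoping requires that each factor $1-\theta(S_k,\bx^k)\mu(\bx^k)$ be nonnegative so that the chain of inequalities is not reversed, which is guaranteed because $\xi(\bx^{k+1})\geq 0$ and $\xi(\bx^k)>0$ force $\theta(S_k,\bx^k)\mu(\bx^k)\leq 1$ (analogously to the observation $\mu(\bx)\leq L$ in Lemma~\ref{lem:firstGD}), and all three quantities $\theta$, $\mu$, $\xi$ are nonnegative. I expect the only mildly delicate point — the "main obstacle," though it is more bookkeeping than genuine difficulty — to be the careful handling of the masking notation: verifying that $\langle\nabla f(\bx^k),\bu^k_{[S]}\rangle$, $(\bu^k_{[S]})^\top\bM\bu^k_{[S]}$, and $g(\bx^k+\bu^k_{[S]})-g(\bx^k)$ collapse exactly to the three terms defining $U_S$, and keeping the $\cX$ versus $\cX^*$ distinction straight in the edge case where $\lambda(\bx^k)=0$ but $\bx^k$ is not a global minimizer.
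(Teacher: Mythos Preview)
Your proposal is correct and follows essentially the same route as the paper's proof: apply $\bM$-smoothness with $\bh=\bu^k_{[S_k]}$, use separability of $g$ to collapse the resulting expression to $\xi(\bx^k)+\min_\bu U_{S_k}(\bx^k,\bu)$, then invoke the definitions \eqref{eq:proportion_function} and \eqref{eq:GPL_function} to rewrite $\min_\bu U_{S_k}=-\theta(S_k,\bx^k)\lambda(\bx^k)=-\theta(S_k,\bx^k)\mu(\bx^k)\xi(\bx^k)$. Your extra remarks on the nonnegativity of the factors for the telescoping product and on the edge case $\lambda(\bx^k)=0$ are more explicit than the paper (which only notes that the step using \eqref{eq:proportion_function} may become an inequality when $\bx^k\notin\cX$), but the underlying argument is the same.
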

\begin{proof}
See Section~\ref{sec:proof_onestepbound}
\end{proof}

Recursion \eqref{eq:onestepbound} is a direct generalization of Lemma~\ref{lem:firstGD}. Indeed, if $\bM =L \bI$, $g=0$, and $S_k=[n]$, then in the view of \eqref{eq:proportion_function_smooth}, $\theta([n],\bx^k) = 1/L$. Note that as $\theta \geq 0$ and $\mu \geq 0$, we can be sure that $\xi(\bx^{k+1}) \leq \xi(\bx^{k})$, which means that we are not getting worse by iterating the Algorithm~\ref{alg:general}. The proof of Lemma~\ref{lem:onestepbound} is straightforward. The difficulty will lie in bounding the forcing and proportion functions so as to obtain convergence.   In Sections~\ref{sec:SPLg}, \ref{sec:WPLg} and \ref{sec:general_nonconvex} we apply this lemma to prove the main results of this paper. 

The $K$-step bound \eqref{eq:onestepbound_global} provides us with a compact and generic bound on the optimality gap at the $K$-th iterate of Algorithm~\ref{alg:general} dependent on the iterates $\bx^0, \bx^1,\dots, \bx^{K-1}$ and the selected sets $S_0,S_1,\dots,S_{k-1}$. This result does not immediately imply convergence as at this level of generality, it is possible for the product appearing in \eqref{eq:onestepbound_global} not to converge to zero. Indeed, this corollary also covers the situation where $S_k = {1}$ for all $k$, which clearly can't result in convergence. We will need to introduce further restrictions in order to establish convergence.

\section{Proportion Function} \label{sec:proportion}

In this section we show standard bounds on the proportion function, which are independent of a given iterate. This will be important further, to recover the convergence rates given by standard theory. We note, that for stochastic methods we bound the expectation of the proportion function conditioned on the last iterate, instead of directly bounding the proportion function. This quantity will be important in the theory for the convergence of stochastic methods, as specified in following sections.

The proportion function is the only quantity in the convergence theory, which is dependent on the choice of the set of coordinates $S$. Therefore, to analyze a new sampling strategy for coordinate descent, one only has to show a bound on the proportion function. This opens up a possible venue of novel techniques for coordinate selection.

In the following we tackle all the known cases of samplings, which we break down by their smoothness. Also, we introduce and analyze a new sampling procedure to showcase the generality of our framework.
\subsection{Smooth problems} \label{sec:proportion_smooth}

In the case of $g = 0$, we have the proportion function equal to \eqref{eq:proportion_function_smooth}, i.e., \[\theta(S,\bx) \qq{=} \frac{ (\nabla_S f(\bx))^\top (\bM_{S})^{-1} \nabla_S f(\bx)}{\|\nabla f(\bx)\|^2}\]
for all $\bx \in \cX^*$, and all $\emptyset \neq S \subseteq [n]$.
Let us break-down the cases according to specific choices of the set $S$.
\begin{itemize}
\item \textbf{Batch Gradient descent:}  In the case when $S = [n]$, we recover the standard gradient descent strategy, which dates back to the work of Cauchy \cite{cauchy1847methode}. In this case we can lower bound the proportion function by \begin{equation} \label{eq:prop_lower_bound_smooth_batch}
\theta([n], \bx) \qq{=} \frac{(\nabla f(\bx))^\top \bM^{-1} \nabla f(\bx)}{\|\nabla f (\bx)\|^2} \qq{\geq} \frac{1}{\lambda_{\max}(\bM)}
\end{equation}
for all $\bx \in \cX^*$. 
\item \textbf{Serial Coordinate Descent:} Suppose $S = \{i\}$, for some given $i$. In this case we get \begin{equation} \label{eq:prop_lower_bound_rcd}
\theta(\{i\}, \bx) \qq{=} \frac{ (\nabla_i f(\bx))^2}{M_{ii} \|\nabla f(\bx)\|^2.}
\end{equation}
There are multiple strategies for choosing the coordinate $i$, and we tackle them one by one.
\begin{itemize}
\item \textbf{Uniform probabilities:} Suppose we choose the coordinate $i$ with the probability $p_i$ given by $p_i = 1/n,$ independently of $\bx$. This strategy was originally analyzed in \cite{Nesterov:2010RCDM}. The expectation of $\theta$ can be lower bounded as \begin{equation} \label{eq:prop_lower_bound_rcd_uniform}
\E_i[\theta(\{i\}, \bx) ~|~ \bx] \qq{\stackrel{\eqref{eq:prop_lower_bound_rcd}}{=}} \frac{1}{n}\sum_{i=1}^n  \frac{(\nabla_i f(\bx))^2}{M_{ii} \|\nabla f(\bx)\|^2} \qq{\geq} \frac{1}{n \cdot \max_i \{M_{ii}\}}.
\end{equation} 
\item \textbf{Importance sampling:} Suppose we choose the coordinate $i$ with the probability $p_i$ given by 
\begin{equation}\label{eq:imp_sampling} p_i \qq{=} \frac{M_{ii}}{\sum_{i=1}^n M_{ii}},\end{equation}
independently of $\bx$. Again, this strategy was originally analyzed in \cite{Nesterov:2010RCDM}. The expectation of $\theta$ can be lower bounded as \begin{equation} \label{eq:prop_lower_bound_rcd_importance}
\E_i[\theta(\{i\}, \bx) ~|~ \bx] \qq{\stackrel{\eqref{eq:prop_lower_bound_rcd}}{=}} \frac{1}{\sum_{i=1}^n M_{ii}}\sum_{i=1}^n  \frac{M_{ii}(\nabla_i f(\bx))^2}{M_{ii}\|\nabla f(\bx)\|^2} \qq{\geq} \frac{1}{ \sum_{i=1}^n M_{ii}}
\end{equation}
\item \textbf{Greedy choice:} Suppose we choose the coordinate $i$ deterministically as \begin{equation} \label{eq:greedy_rule_smooth}
i \qq{=} \argmax_{j} \left\{ \frac{(\nabla_i f(\bx))^2 }{M_{ii}}\right\}. 
\end{equation}
It is straightforward that this strategy maximizes the proportion function \eqref{eq:prop_lower_bound_rcd} for a single iteration, given that we choose only a single coordinate. It was originally proposed in \cite{Nesterov:2010RCDM} and further improved in \cite{nutini2015coordinate}. However, in this case we do not have a better bound than \eqref{eq:prop_lower_bound_rcd_importance}, which would be independent of $\bx$\footnote{In \cite{nutini2015coordinate} they proved a slightly better bound using $\ell_1$-strong convexity, which can be achieved by replacing the $\ell_2$-norm by $\ell_{\infty}$-norm in the definition of the proportion and forcing functions.}. We can get this bound using that the maximum of some quantity is more than its average weighted by $M_{ii}$ \begin{equation} \label{eq:prop_lower_bound_rcd_greedy}
\theta(\{i\}, \bx) \qq{=} \frac{\max_j \{(\nabla_j f(\bx))^2/M_{jj}\}}{\|\nabla f(\bx)\|^2} \qq{\geq} \frac{1}{\sum_{j=1}^n M_{jj}}.
\end{equation}
Observe that in the case of $\|\nabla f(\bx)\|^2 = (\nabla_i f(\bx))^2$ we have that the above lower bound \eqref{eq:prop_lower_bound_rcd_greedy} could be $\sum_{j=1}^n M_{jj}/M_{ii}$ larger to still hold. Therefore on some iterations, greedy coordinate descent is much better than coordinate descent with importance sampling, although their global bounds are the same. This usally leads to superiority of greedy rules in practice, in the case that they can be implemented cheaply.
\end{itemize}
\item \textbf{Minibatch Coordinate Descent:} Suppose $|S| = \tau$, for some given $n > \tau > 1$. There are currently two analyzed strategies in this case and we introduce a third. 
\begin{itemize} 
\item \textbf{$\tau$-nice sampling:} Assume that we want to sample a subset of $\tau > 1$ coordinates at each iteration, uniformly at random from all subsets of cardinality $\tau$. It can be inferred from results established  in \cite{SDNA} that the expectation of the proportion function can be lower bounded by the quantity
\begin{equation} \label{eq:prop_sdna_sampling}
\E[\theta(S, \bx) ~|~ \bx] \qq{\stackrel{\eqref{eq:proportion_function_smooth}}{=}} \frac{ (\nabla f(\bx))^\top \E[\bM_{[S]}^{-1}] \nabla f(\bx)}{\|\nabla f(\bx)\|^2}  \qq{\geq}  \lambda_{\min}\left( \E\left[\bM_{[S]}^{-1}\right]\right),
\end{equation}
where $\bM_{[S]}^{-1}$ is the $n \times n$ matrix constructed by putting the matrix $\bM_{S}^{-1}$ on the columns and rows specified by $S$ and zero out the rest of the entries.
Additionally, assuming that we have a factorization $\bM = \bA^\top \bA$, where $\bA \in \R^{m \times n}$, using results from \cite{ESO} , this can be further lower bounded as \begin{equation} \label{eq:prop_taunice_sampling}
\lambda_{\min}\left( \E\left[\bM_{[S]}^{-1}\right]\right) \qq{\stackrel{\text{\cite{SDNA}}}{\geq}}   \frac{1}{n \cdot \max_i \{v_i\}}, 
\end{equation} 
where  \[v_i \qq{\stackrel{\text{\cite{ESO}}}{\eqdef}} \sum_{j=1}^m \left[ 1 + \frac{(\|\bA_{j:}\|_0 - 1)(\tau - 1)}{n-1} \right] A_{ji}^2.\]

\item \textbf{Importance sampling for minibatches:} A minibatch version of importance sampling was recently proposed in \cite{ISM}. The main idea is as follows: we randomly partition the coordinates into $\tau$ approximately equally sized ``buckets'', and subsequently and independently perform  standard importance sampling (as described above) for each bucket. The  sampling is then generated as the union of all sampled coordinates. For specific bounds, we recommend discussing the original paper \cite{ISM}, as they are not available in a compact form.
\item \textbf{Greedy minibatches:} To showcase the power of our framework, we introduce a brand-new selection rule called \textit{greedy minibatches}. This selection rule aims to select the set $S$ such that it minimizes the proportion function on the current iteration, i.e., \begin{equation} \label{eq:greedy_rule_minibatch_smooth}
S^g \qq{\eqdef} \argmax_{S : |S| = \tau} \left\{ (\nabla_S f(\bx))^\top (\bM_{S})^{-1} \nabla_S f (\bx) \right\}.
\end{equation}
The above selection procedure is a difficult problem in general, but it might be feasible for some specific problems, e.g., for diagonal $\bM$ or the function $f$ with a special structure (see \cite{nutini2015coordinate} for examples).

To get a lower bound on the proportion function independent of the current iterate $\bx$, we use the argument that the maximum of some quantities is always at least equal to any weighted mean of the same quantities. Using this, we get that $\theta(S^g, \bx) \geq \E[\theta(S, \bx) ~|~ \bx]$, where the sets $S$ were selected according to the $\tau$-nice sampling, importance sampling for minibatches introduced above, or any other sampling. Therefore, using \eqref{eq:prop_sdna_sampling} we get that 
\begin{equation} \label{eq:prop_minibatch_greedy}
\theta(S^g, \bx) \qq{\stackrel{\eqref{eq:proportion_function_smooth} + \eqref{eq:prop_sdna_sampling}}{\geq}}  \lambda_{\min}\left( \E\left[\bM_{[S]}^{-1}\right]\right).
\end{equation}
Observe that both the selection rule and the above bound is a generalization of the greedy coordinate sampling to minibatches. Also observe that the above bound is potentially very loose. As an example, consider a diagonal matrix $\bM$ with all the elements equal to 1. The right-hand side of \eqref{eq:prop_minibatch_greedy} is then equal to $\tau/n$, while the left-hand side is equal to $1$. Even in this very special case, the bound is disregarding a factor of $n/\tau$, which is potentially huge. For this reason, it is expected that the greedy minibatches will actually perform much better in practice than in theory \eqref{eq:prop_minibatch_greedy}.
\end{itemize}
\end{itemize}

\subsection{Non-smooth problems} \label{sec:proportion_nonsmooth}

Let us define for all $i$ and all $\bx \in \R^n$ the function \begin{equation} \label{eq:proximal_Di}
\lambda_i(\bx) \qq{\eqdef} - L \cdot \min_{v \in \R} \left\{\nabla_i f(\bx) v + \frac{L}{2}v^2 + g_i(x_i + v) - g_i(x_i)\right\}.
\end{equation}
Using the definition of proportion function \eqref{eq:proportion_function} with the diagonal $\bM \eqdef L \mathbf{I}$ we get \begin{equation} \label{eq:proportion_non-smooth}
\theta(S, \bx) \qq{\stackrel{\eqref{eq:proportion_function} + \eqref{eq:proximal_Di}}{=}} \frac{\sum_{i \in S} \lambda_i(\bx)}{L \sum_{j=1}^n \lambda_j(\bx)}.
\end{equation}  

As mentioned in Section~\ref{sec:smoothness}, the value $L$ can be chosen as $L = \lambda_{\max}(\bM)$ to satisfy the inequality \eqref{eq:Msmoothness}. However, this choice of $L$ might be suboptimal in many cases. As an example, when analyzing coordinate descent methods, the vector $\bh$ will always be of the form $h \be^i$, where $\be^i$ is the $i$-th coordinate vector (see Appendix~\ref{sec:proof_onestepbound}). Therefore, it would be sufficient for $L$ to satisfy \eqref{eq:Msmoothness} for the specific choice $\bh = h \be^i$ which leads to \[f(\bx + h\be^i) \qq{\leq} f(\bx) + \nabla_i f(\bx) h + \frac{M_{ii}}{2}h^2 \qq{\leq} f(\bx) + \nabla_i f(\bx) h  + \frac{L}{2}h^2\]
for each $i \in \{1, \dots, n\}$ and $h \in \R$. We can easily observe from the above that this will be satisfied with $L := \max_i \{M_{ii}\} \leq \lambda_{\max}(\bM)$. To account for this in general, we need to take some additional measures. Specifically, let $\cS$ be the collection of all sets $S \subset [n]$ which can be possibly generated during the iterative process by $\cP_k$. As an example, $\cS = \{\{1\}, \{2\}, \dots, \{n\}\}$ corresponds to all such sets for coordinate descent. We define $L$ as the smallest number for which \[f(\bx + \bh_{[S]}) \qq{\leq} f(\bx) + \langle \nabla f(\bx), \bh_{[S]}\rangle + \frac{1}{2}\langle \bM \bh_{[S]}, \bh_{[S]} \rangle\]
is satisfied for every $\bx, \bh \in \R^n$ and every $S \in \cS$.  It is straightforward to see that if the size of possible sets $S \in \cS$ is upper bounded as  $|S| \leq \tau$, then we can safely choose \begin{equation} \label{eq:L_general}
L \qq{=} L_\tau \qq{\eqdef} \max_{S : |S| = \tau} \{\lambda_{\max}(\bM_S)\} \qq{\leq} \max_{S:|S|=\tau}\left\{\sum_{i \in S} M_{ii} \right\},
\end{equation}
where the last inequality is due to the eigenvalues being positive and their sum being the trace.
We can easily verify that this generalizes to $L = L_{n} = \lambda_{\max}(\bM)$ for gradient descent and $L = L_1 = \max_i \{M_{ii}\}$ for coordinate descent.

Now, we will breakdown the cases depending on the block selection procedure.
\begin{itemize}
\item \textbf{Proximal Gradient Descent:} The first result in the proximal setting was the Iterative Shrinkage Tresholding Algorithm (ISTA), which selects all the coordinates on each iteration. The bound on the proportion function is given by \begin{equation} \label{eq:proportion_ISTA}
\theta([n], \bx) \qq{\stackrel{\eqref{eq:proportion_non-smooth}}{=}} \frac{\sum_{i = 1}^n \lambda_i(\bx)}{L_n  \sum_{j=1}^n \lambda_j(\bx)} \qq{=} \frac{1}{L_n} \qq{\stackrel{\eqref{eq:L_general}}{=}} \frac{1}{\lambda_{\max}(\bM)}. 
\end{equation}
\item \textbf{Serial Proximal Coordinate descent:} Suppose $|S| = 1$ and specifically let $S = \{i\}.$ Then we have that \begin{equation} \label{eq:proportion_non-smooth_serial}
\theta(\{i\}, \bx) \qq{\stackrel{\eqref{eq:proportion_non-smooth}}{=}} \frac{\lambda_i(\bx)}{L_1 \sum_{j=1}^n \lambda_j(\bx)},
\end{equation}
as the size of the selected sets are upper bounded by $1$. The procedure leading to the choice of $i$ distinguishes between various serial approaches. 
\begin{itemize}
\item \textbf{Uniform probabilities:} Assume we choose the coordinate $i$ uniformly at random at each iteration. Then we can bound the expectation of the proportion function as 
\begin{eqnarray} 
\E[\theta(\{i\}, \bx) ~|~ \bx] & \qq{\stackrel{\eqref{eq:proportion_non-smooth_serial}}{=}} & \E \left[ \frac{\lambda_i(\bx)}{L_1 \sum_{j=1}^n \lambda_j(\bx)} \right] \qq{=} \frac{\frac{1}{n}\sum_{i=1}^n \lambda_i(\bx)}{L_1 \sum_{j=1}^n \lambda_j(\bx)} \qq{=} \frac{1}{nL_1} \nonumber \\
& \qq{\stackrel{\eqref{eq:L_general}}{=}} & \frac{1}{n \max_i \{M_{ii}\}}. \label{eq:proportion_non-smooth_serial_uniform}
\end{eqnarray}
\item \textbf{Greedy choice:} Another approach is to pick the coordinate $i$, which maximizes the proportion function in \eqref{eq:proportion_non-smooth_serial}, which was analyzed in \cite{nutini2015coordinate}. The best bound independent on $\bx$ coincides with the above bound for uniform sampling, using the fact that a mean of some quantity is less than its maximum
\begin{eqnarray} 
\theta(\{i\}, \bx) & \qq{\stackrel{\eqref{eq:proportion_non-smooth_serial}}{=}} & \frac{\max_i \{\lambda_i(\bx)\}}{L_1\sum_{j=1}^n \lambda_j(\bx)} \qq{\geq}\frac{\frac{1}{n}\sum_{i=1}^n \lambda_i(\bx)}{L_1\sum_{j=1}^n \lambda_j(\bx)} \qq{=} \frac{1}{nL_1} \nonumber \\ & \qq{\stackrel{\eqref{eq:L_general}}{=}} & \frac{1}{n \max_{i}\{M_{ii}\}}. \label{eq:proportion_non-smooth_serial_greedy}
\end{eqnarray}
Similarly as in the smooth case, observe that the quantity $\max_i\{\lambda_i(\bx)\}$ is potentially up to $n$ times larger than $\frac{1}{n}\sum_{i=1}^n \lambda_i(\bx)$ and therefore the bound \eqref{eq:proportion_non-smooth_serial_greedy} can be potentially $n$ times larger in some cases, which usually results in better empirical results.
\end{itemize}
\item \textbf{Minibatch Proximal Coordinate Descent:} Suppose $|S| = \tau$, where $\tau$ is given, which implies that we can use $L = L_{\tau}$ in the bounds. We introduce two options for the block selection procedure.
\begin{itemize}
\item $\tau$\textbf{-nice sampling:} Only one variant of a sampling for this setup was considered before \cite{PCDM}, and that is a uniform choice of $\tau$ coordinates without repetition.  In expectation, each coordinate has a chance of $\tau/n$ to be picked, which is used in the bound to get  
\begin{eqnarray}
\E[\theta(S, \bx) ~|~ \bx] & \qq{\stackrel{\eqref{eq:proportion_non-smooth}}{=}} & \E \left[ \frac{\sum_{i \in S} \lambda_i(\bx)}{L_\tau\sum_{j=1}^n \lambda_j(\bx)} \right] \qq{=} \frac{\frac{\tau}{n}\sum_{i=1}^n \lambda_i(\bx)}{L_\tau\sum_{j=1}^n \lambda_j(\bx)} \qq{=} \frac{\tau}{nL_\tau} \nonumber \\ & \qq{\stackrel{\eqref{eq:L_general}}{=}} & \frac{\tau}{n \max_{S : |S| = \tau}\{\lambda_{\max}(\bM_{S})\}}. \label{eq:proportion_non-smooth_minibatch}
\end{eqnarray}
To our best knowledge, this bound is new, as the previous bound considered $L_n$ instead of $L_{\tau}$. As $L_{\tau} \leq L_n$, the new bound is better.
\item \textbf{Greedy minibatches:} Similarly as in the smooth case, we introduce a new selection rule -- greedy minibatches. Specifically, the corresponding set $S^g$ is given by \begin{equation} \label{eq:greedy_rule_minibatch_nonsmooth}
S^g \qq{\eqdef} \argmax_{S : |S| = \tau} \left\{ \sum_{i \in S} \lambda_i(\bx) \right\}.
\end{equation}
Note that for $\tau=1$ we recover the greedy coordinate descent. To give global bounds independent of $\bx$ for this strategy, we again use the fact that maximum is an upper bound for the mean, to get \begin{eqnarray}
\theta(S^g, \bx) & \qq{\stackrel{\eqref{eq:proportion_non-smooth}}{=}} &  \frac{\sum_{i \in S^g} \lambda_i(\bx)}{L_\tau\sum_{j=1}^n \lambda_j(\bx)} \qq{\geq} \frac{\frac{\tau}{n}\sum_{i=1}^n \lambda_i(\bx)}{L_\tau\sum_{j=1}^n \lambda_j(\bx)} \qq{=} \frac{\tau}{nL_\tau} \nonumber \\ & \qq{\stackrel{\eqref{eq:L_general}}{=}} & \frac{\tau}{n \max_{S : |S| = \tau}\{\lambda_{\max}(\bM_{S})\}}. \label{eq:proportion_non-smooth_minibatch_greedy}
\end{eqnarray}
Note that the the above bound can be potentially very pessimistic, as \[ 1 \qq{\geq} \frac{\sum_{i=1}^n \lambda_i(\bx)}{\sum_{i \in S^g} \lambda_i(\bx)} \qq{\geq} \frac{\tau}{n}\]
by using $\sum_{i \in S^g} \lambda_i(\bx) \geq \frac{\tau}{n}\sum_{i=1}^n \lambda_i(\bx) \geq \frac{\tau}{n}\sum_{i \in S^g} \lambda_i(\bx) $. Therefore, the bound \eqref{eq:proportion_non-smooth_minibatch_greedy} can be up to $n/\tau$ times better in certain cases.
\end{itemize}
\end{itemize}

\section{Strongly Polyak-\L{}ojasiewicz Functions} \label{sec:SPL}

In this section, we reinvent the strongly PL functions using the proximal forcing function \eqref{eq:GPL_function}, and develop the corresponding convergence rates. Also, we show how to recover the known results in this setting by applying our theory.

\subsection{Strongly PL functions}
\begin{definition}[Strongly PL functions: composite case] \label{def:strongPL-composite} We say that $F$ is a strongly PL function there exists a scalar $\mu > 0$ satisfying
\begin{equation} \label{eq:s98g98f898d} 
\mu(\bx) \qq{\geq} \mu 
\end{equation} for all $\bx \in {\rm dom}(g) / \cX^*$. The collection of all functions $F$ satisfying inequality \eqref{eq:s98g98f898d} will be denoted $\cS_{PL}^g(\mu)$, and we say that $F$ is strongly PL with parameter $\mu$.
\end{definition}

Recall that in the smooth case we said that a function $f \in \cS_{PL}(\mu)$, if $f$ satisfied the condition \eqref{eq:s09h9hf3}, which can be observed to be equivalent to \eqref{eq:s98g98f898d} for smooth functions. Therefore we have that $\cS_{PL}(\mu) \subset \cS^g_{PL}(\mu)$.

The above definition is not new, it was originally introduced in a slightly different form by Karimi et al.~\cite{karimi2016linear}.

\subsection{Strongly convex functions are strongly PL} \label{sec:SPLg}

Let $\lambda \geq 0$. Function $F:\R^n \rightarrow \extR$ is said to be \textit{$\lambda$-strongly convex}, if for all $\bx, \by \in \R^n$ and all $\beta \in [0,1]$ we have 
\begin{equation} \label{eq:strongly_convex}
F(\beta \bx + (1-\beta)\by) \qq{\leq} \beta F(\bx) + (1-\beta) F(\by) - \frac{\lambda\beta(1- \beta)}{2}\|\bx - \by\|^2.
\end{equation}
If $F$ is $0$-strongly convex, we refer to it simply as \textit{convex.} Consider a differentiable function $f:\R^n \rightarrow \R$. If $f$ is $\lambda$-strongly convex for $\lambda\geq 0$, then
\begin{equation} \label{eq:strongly_convex_smooth}
f(\bx + \bh) \qq{\geq} f(\bx) + \langle \nabla f(\bx), \bh \rangle + \frac{\lambda}{2}\|\bh\|^2, \qquad \bx,\bh \in \R^n.
\end{equation} 

We will now show that if $F$ is strongly convex, then $F\in \cS^g_{PL}(\rho)$ for some specific   $\rho$ and $\cA$. This means that the class of strongly PL (composite) functions contains the class of strongly convex (composite) functions. 

\begin{theorem} \label{thm:GPL_convex_strongly_convexXXX}
Assume $F$ is $\lambda_F$-strongly convex with $\lambda_F>0$, and $f$ is  $\lambda_f$-strongly convex with $\lambda_f\geq 0$.  Then \begin{equation} \label{eq:GPL_function_strongly_convex}
\mu(\bx) \qq{\geq}  \mu \qq{\eqdef} \min \left\{ \frac{L}{2}, 
\frac{L \lambda_F}{\lambda_F - \lambda_f + L} 
 \right\}
\end{equation}
for all $\bx\in {\rm dom}(g)/\cX^*$, and hence $F\in \cS^g_{PL}(\mu)$.
\end{theorem}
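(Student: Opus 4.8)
The plan is to exploit the variational definition \eqref{eq:lambda_ih8d} of $\lambda(\bx)$: since the minimum there is over all $\by\in\R^n$, every admissible test direction yields a lower bound
\[\lambda(\bx) \;\geq\; -L\left[\langle \nabla f(\bx), \by\rangle + \tfrac{L}{2}\|\by\|^2 + g(\bx+\by) - g(\bx)\right].\]
I would take $\by = \beta(\bx^*-\bx)$ for a scalar $\beta\in(0,1]$ to be fixed later, where $\bx^*$ is a global minimizer of $F$. This is admissible: strong convexity of $F$ forces ${\rm dom}(F)={\rm dom}(g)$ to be convex (since $f$ is finite everywhere), so $\bx+\beta(\bx^*-\bx)=\beta\bx^*+(1-\beta)\bx\in{\rm dom}(g)$ and $g$ is finite there. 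This finiteness check is the one technical point that must not be skipped.

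The second step bounds $g(\beta\bx^*+(1-\beta)\bx)$. Writing $g=F-f$, I would upper bound $F$ at the convex combination using $\lambda_F$-strong convexity \eqref{eq:strongly_convex}, and lower bound $f$ at the same point using $\lambda_f$-strong convexity \eqref{eq:strongly_convex_smooth} (so that $-f$ gets upper bounded). Substituting these into the displayed inequality, the two occurrences of $\beta\langle\nabla f(\bx),\bx^*-\bx\rangle$ cancel, the $F$-valued terms $\beta F(\bx^*)+(1-\beta)F(\bx)-f(\bx)-g(\bx)=\beta F(\bx^*)-\beta F(\bx)$ collapse to $-\beta\xi(\bx)$, and after collecting the three quadratic contributions ($\tfrac{L\beta^2}{2}$ from $\tfrac L2\|\by\|^2$, $-\tfrac{\lambda_F\beta(1-\beta)}{2}$ from strong convexity of $F$, and $-\tfrac{\lambda_f\beta^2}{2}$ from strong convexity of $f$) one is left with
\[\lambda(\bx) \;\geq\; L\beta\,\xi(\bx) - \tfrac{L\beta}{2}\big[(L-\lambda_f)\beta - \lambda_F(1-\beta)\big]\,\|\bx-\bx^*\|^2.\]

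The final step is the choice $\beta = \dfrac{\lambda_F}{L-\lambda_f+\lambda_F}$. Because $f$ is $L\bI$-smooth and $\lambda_f$-strongly convex, combining \eqref{eq:Msmoothness} (with $\bM=L\bI$) and \eqref{eq:strongly_convex_smooth} gives $\lambda_f\leq L$, so $\beta\in(0,1]$ and the choice is legal; moreover it makes the bracket $(L-\lambda_f)\beta-\lambda_F(1-\beta)$ vanish, so the $\|\bx-\bx^*\|^2$ term disappears entirely and $\lambda(\bx)\geq \tfrac{L\lambda_F}{L-\lambda_f+\lambda_F}\,\xi(\bx)$. Hence $\mu(\bx)=\lambda(\bx)/\xi(\bx)\geq \tfrac{L\lambda_F}{\lambda_F-\lambda_f+L}\geq \min\{\tfrac L2,\tfrac{L\lambda_F}{\lambda_F-\lambda_f+L}\}=\mu$ for all $\bx\in{\rm dom}(g)/\cX^*$, which is the claim. (If one prefers not to annihilate the quadratic term — e.g. to stay closer to the stated bound — one can instead invoke $\|\bx-\bx^*\|^2\leq \tfrac{2}{\lambda_F}\xi(\bx)$, valid since $\bx^*$ minimizes the $\lambda_F$-strongly convex $F$, and then optimize over $\beta$; this leads to the same optimal value of $\beta$.) The only real obstacle is the constant bookkeeping and spotting the exact $\beta$ that cancels the quadratic term; everything else is routine.
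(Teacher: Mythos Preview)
Your proof is correct and rests on the same core idea as the paper: plug the test direction $\by=\beta(\bx^*-\bx)$ into the variational definition of $\lambda(\bx)$ and control the result via the strong convexity of $F$ and $f$, arriving at the same concave quadratic in $\beta$. The paper routes this through an auxiliary lemma (Lemma~\ref{lem:technical_GPL}) that takes the $\bx$-dependent optimizer of that quadratic over $(0,1]$, producing a two-branch $\min$ in \eqref{eq:technical_lemma_GPL}, and then applies $(a+b)^2\geq 4ab$ to collapse the second branch. Your fixed, $\bx$-independent choice $\beta=\lambda_F/(L-\lambda_f+\lambda_F)$ annihilates the $\|\bx-\bx^*\|^2$ term outright and yields directly $\mu(\bx)\geq L\lambda_F/(\lambda_F-\lambda_f+L)$, which is actually sharper than the stated $\min$; this avoids both the case split and the AM-GM step. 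The price is that the paper's intermediate lemma is reusable for the weakly convex case (Theorem~\ref{thm:GPL_convex_weakly_convexXXX}), whereas your shortcut is tailored to the strongly convex setting.
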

\begin{proof}
See Section~\ref{sec:proof_GPL_strongly_convex}.
\end{proof}

In the above theorem we do not enforce separability assumption on $g$.


\subsection{Convergence}\label{sec:8h9d8h8fsd}

We have the  following convergence result, establishing convergence to a global minimizer.

\begin{theorem} \label{thm:SPLg}
Invoke Assumption~\ref{ass:non-convex_setup}. Further, assume $F \in \cS_{PL}^g(\mu)$ (i.e., $F$ is strongly PL with parameter $\mu$), and let 
\begin{equation} \label{eq:SPLg_muk}
\mu_{k} \qq{\eqdef} \frac{\mu \E[\xi(\bx^k) \cdot \theta (S_k, \bx^k)]}{\E[\xi(\bx^k)]}.
\end{equation}
Then \begin{equation} \label{eq:SPLg_convergence_rate} 
\sum_{k=0}^{K-1}\mu_k \geq \log \left(\frac{\xi(\bx^0)}{\epsilon}\right) \quad \Rightarrow \quad \E[\xi(\bx^K)] \leq \epsilon.
\end{equation}
\end{theorem}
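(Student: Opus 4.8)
The plan is to start from the generic descent lemma (Lemma~\ref{lem:onestepbound}), which gives, conditionally on the history up to iteration $k$,
\[
\xi(\bx^{k+1}) \qq{\leq} \big[1 - \theta(S_k,\bx^k)\cdot\mu(\bx^k)\big]\cdot\xi(\bx^k).
\]
Using the strong PL assumption $\mu(\bx^k)\geq\mu$ (and $\theta\geq 0$) we may replace $\mu(\bx^k)$ by $\mu$ to obtain $\xi(\bx^{k+1}) \leq \xi(\bx^k) - \mu\,\theta(S_k,\bx^k)\,\xi(\bx^k)$. I would then take expectations over everything (tower property): $\E[\xi(\bx^{k+1})] \leq \E[\xi(\bx^k)] - \mu\,\E[\theta(S_k,\bx^k)\,\xi(\bx^k)]$. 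The key move is to recognize that, by the very definition \eqref{eq:SPLg_muk} of $\mu_k$, the subtracted term equals $\mu_k\cdot\E[\xi(\bx^k)]$, so the recursion collapses to the clean scalar inequality
\[
\E[\xi(\bx^{k+1})] \qq{\leq} (1-\mu_k)\,\E[\xi(\bx^k)].
\]

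Next I would iterate this scalar recursion from $k=0$ to $K-1$, yielding $\E[\xi(\bx^K)] \leq \big(\prod_{k=0}^{K-1}(1-\mu_k)\big)\,\xi(\bx^0)$ (note $\xi(\bx^0)$ is deterministic). To finish, I would use the elementary bound $1-t\leq e^{-t}$, valid for all real $t$, to get $\prod_{k=0}^{K-1}(1-\mu_k) \leq \exp\big(-\sum_{k=0}^{K-1}\mu_k\big)$; this step also tacitly needs $\mu_k\leq 1$ so that the factors $(1-\mu_k)$ are nonnegative and the inequality chain is monotone — this follows because $\theta\leq 1/L$-type bounds together with $\mu(\bx)\leq L$ (the analogue of the ``moreover'' part of Lemma~\ref{lem:firstGD}) force $\theta(S_k,\bx^k)\mu(\bx^k)\leq 1$ pointwise, hence $\mu_k\in[0,1]$ after taking the weighted average. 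Then the hypothesis $\sum_{k=0}^{K-1}\mu_k \geq \log(\xi(\bx^0)/\epsilon)$ gives $\E[\xi(\bx^K)] \leq \exp\big(-\log(\xi(\bx^0)/\epsilon)\big)\,\xi(\bx^0) = \epsilon$, which is exactly \eqref{eq:SPLg_convergence_rate}.

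The main obstacle — really the only subtle point — is the passage from the conditional one-step bound to the unconditional expectation recursion and the correct handling of the correlation between $\theta(S_k,\bx^k)$ and $\xi(\bx^k)$. One must not naively factor $\E[\theta(S_k,\bx^k)\xi(\bx^k)]$ as a product of expectations; the definition of $\mu_k$ is engineered precisely so that this correlated quantity is absorbed verbatim, so the proof should invoke \eqref{eq:SPLg_muk} rather than any independence argument. A secondary care point is ensuring all the $\mu_k$ lie in $[0,1]$ so that the telescoping product of $(1-\mu_k)$ terms behaves monotonically and the $1-t\leq e^{-t}$ estimate applies in the intended direction; this is where the boundedness $\theta\cdot\mu\leq 1$ is needed, and I would state it as a short preliminary observation before the main computation.
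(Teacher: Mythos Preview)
Your proof is correct and follows essentially the same route as the paper: start from the one-step bound of Lemma~\ref{lem:onestepbound}, replace $\mu(\bx^k)$ by $\mu$ via the strong PL assumption, take full expectations, absorb the correlated term via the definition \eqref{eq:SPLg_muk} of $\mu_k$, and finish with $1-t\leq e^{-t}$. The only minor difference is ordering: the paper applies $1-\mu_k\leq e^{-\mu_k}$ at \emph{each} step and then chains the resulting inequalities $\E[\xi(\bx^{k+1})]\leq e^{-\mu_k}\E[\xi(\bx^k)]$, which (since $\E[\xi(\bx^k)]\geq 0$) works for all real $\mu_k$ and so avoids the need to verify $\mu_k\leq 1$; your product-first approach requires that extra check, and while your conclusion $\theta(S_k,\bx^k)\mu(\bx^k)\leq 1$ pointwise is correct, it follows most directly from nonnegativity of $\xi(\bx^{k+1})$ in the one-step bound itself rather than from separate ``$\theta\leq 1/L$'' and ``$\mu\leq L$'' estimates.
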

\begin{proof}
See Section~\ref{sec:proof_SPLg_general}.
\end{proof}

In order to get concrete complexity results from the above theorem, we need to estimate the speed of growth of $\sum_{k=0}^{K-1} \mu_k$ in $K$. There no universal way to do this, which is why we state the above result the way we do. Instead, in each situation this needs to be estimated separately. Typically, this will be done by lower bounding $\mu_k$ for each $k$ separately. Let us illustrate this using a couple examples.

If the blocks are selected {\em deterministically}, then the expectations in \eqref{eq:SPLg_muk} do not play any role, and we have $\mu_k = \mu \theta(S_k, \bx^k)$. If we have a global lower bound of the form \[\theta(S_k, \bx^k) \qq{\geq} c \qq{>} 0\] readily available, then $\sum_{k=0}^{K-1} \mu_k \geq \mu c K$, which implies the rate  
 \begin{equation} \label{eq:SPLg_convergence_rateXX} 
K \geq \frac{1}{c\mu} \log \left(\frac{\xi(\bx^0)}{\epsilon}\right) \quad \Rightarrow \quad \xi(\bx^K) \leq \epsilon
\end{equation}
or, equivalently, $\xi(\bx^K) \leq  \xi(\bx^0) \cdot e^{-c \mu K}$. More generally, convergence is established whenever we can lower bound  $\theta(S_k, \bx^k) \geq c_k >0$, where the constants $c_k$ sum up to infinity.

If the blocks are selected {\em stochastically}, then the sequence of iterates $\bx^k$ is also stochastic. In such cases, it is often possible to come up with a bound  for the expectation of $\theta(S_k, \bx^k)$ conditioned on $\bx^k$:
\begin{equation} \label{eq:independent_SPLg}
\E[\theta(S_k, \bx^k) ~|~ \bx^k ] \qq{\geq} c \qq{>} 0.
\end{equation}
If this is the case, we claim that $\mu_k$ can be lower bounded by $\mu c$, and hence Theorem~\ref{thm:SPLg} implies the same rate as before: given by \eqref{eq:SPLg_convergence_rateXX}, only with a bound on the expectation $\E[\xi(\bx^k)] \leq \epsilon$ on the right-hand side. More generally, if $\E[\theta(S_k, \bx^k) ~|~ \bx^k ] \geq c_k \geq 0$, then $\mu_k 
\geq \mu c_k$, and convergence is guaranteed as long as $\sum_{k}c_k = \infty$.

Let us now return to the claim. Indeed,  the bound $\mu_k \geq \mu c$ follows from \eqref{eq:independent_SPLg} by applying Lemma~\ref{lem:onestep_expectation} (see Appendix) with $X := \bx^k, Y := \theta_k$ and  $f(\bx) := \xi(\bx)$.


\subsection{Applications of Theorem~\ref{thm:SPLg}}

We now showcase the use of Theorem~\ref{thm:SPLg} on  selected algorithms which arise as special cases of our generic method (Algorithm~\ref{eq:US_min_nonsmooth}).

\paragraph{Proximal gradient descent.}
If for all $k$ we choose $S_k=[n]$ with probability 1,  
Algorithm~\ref{eq:US_min_nonsmooth} reduces to (proximal) gradient descent. In both smooth and non-smooth case we have $\theta([n],\bx^k) \geq 1/\lambda_{\max}(\bM) $ (see \eqref{eq:prop_lower_bound_smooth_batch} and \eqref{eq:proportion_ISTA}). Substituting into \eqref{eq:SPLg_convergence_rateXX}, we get the rate
\[K  \geq \frac{L}{\mu}\log \left(\frac{\xi(\bx^0)}{\epsilon}\right) \quad \Rightarrow \quad \xi(\bx^K) \leq \epsilon,\]
in both the smooth and non-smooth case. These results were previously  obtained for strongly PL functions in \cite{karimi2016linear}.

\paragraph{Randomized coordinate descent with uniform probabilities.}

Randomized coordinate descent, analyzed in \cite{Nesterov:2010RCDM, UCDC}, arises as special case of 
Algorithm~\ref{eq:US_min_nonsmooth} by choosing $S_k = \{i_k\}$, where $i_k$ is an index chosen from $[n]$ uniformly at random, and independently of the history of the method.

Note that a bound on $\E[\theta(S_k, \bx^k ~|~ \bx^k)]$ is readily available in Section~\ref{sec:proportion}, specifically in \eqref{eq:prop_lower_bound_rcd_uniform} for the smooth case and \eqref{eq:proportion_non-smooth_serial_uniform} in the non-smooth case, and it takes the form \[\E[\theta(S_k, \bx^k)  ~|~ \bx^k] \qq{\geq} c \qq{\eqdef} \tfrac{1}{ n \max_i \{M_{ii}\} }. \] As we have seen in the discussion immediately following Theorem~\ref{thm:SPLg},   this implies the bound $\mu_k \geq \mu c$. Applying Theorem~\ref{thm:SPLg},  we conclude that \[
K\geq  \frac{n \max\{M_{ii}\}}{\mu}\log\left(\frac{\xi(\bx^0)}{\epsilon}\right) \quad \Rightarrow \quad \E[\xi(\bx^K)] \leq \epsilon,
\]
which is the same bound as given in \cite{karimi2016linear} for strongly PL functions in both smooth and non-smooth case. 

\paragraph{Randomized coordinate descent with importance sampling}

We now allow for specific nonuniform probabilities: probability of choosing $S_k=\{i\}$ is proportional to $M_{ii}$ (see \eqref{eq:imp_sampling}). In view of  \eqref{eq:prop_lower_bound_rcd_importance}, we get $\E[\theta(S_k, \bx^k ~|~ \bx^k)] \geq c \eqdef 1/\sum_{i=1}^n M_{ii}$ for smooth functions. This leads to the complexity result
\[K  \geq \frac{\sum_{i=1}^nM_{ii}}{\mu}\log \left(\frac{\xi(\bx^0)}{\epsilon}\right) \quad \Rightarrow \quad \E[\xi(\bx^K)] \leq \epsilon,\]
which is {\em is a new result} for strongly PL functions. However, in  the special case of strongly-convex functions, this result is known \cite{Nesterov:2010RCDM, UCDC,NSync}.

\paragraph{Minibatch coordinate descent.}
Assume $S_k$ is a subset of $[n]$ of cardinality $\tau$, chosen uniformly at random. This leads to (strandard) minibatch coordinate descent. In view of \eqref{eq:prop_sdna_sampling}, we have the bound $\E[\theta(S, \bx) ~|~ \bx] \geq  \lambda_{\min}\left( \E\left[\bM_{[S]}^{-1}\right]\right)$ in the smooth case. Substituting into \eqref{eq:SPLg_convergence_rateXX}, we get the rate
\[K  \geq \frac{1}{\mu \lambda_{\min}(\E[\bM_{S}^{-1}])}\log \left(\frac{\xi(\bx^0)}{\epsilon}\right) \quad \Rightarrow \quad \E[\xi(\bx^K)] \leq \epsilon.\]
This result is novel for the strongly PL case, but it was already established before for strongly convex functions in \cite{SDNA}. 

In the non-smooth case we can use the bound \eqref{eq:proportion_non-smooth_minibatch} given by $\E[\theta(S, \bx) ~|~ \bx] \geq \tau/nL_{\tau}$, where $L_\tau$ is given by \eqref{eq:L_general}. It follows that we have the guarantee \[K  \geq \frac{n L_{\tau}}{\mu \tau}\log \left(\frac{\xi(\bx^0)}{\epsilon}\right) \quad \Rightarrow \quad \E[\xi(\bx^K)] \leq \epsilon,\]
which is novel for strongly PL and also for strongly convex functions, as $L_\tau$ is less or equal to the standard $L_n$ \cite{PCDM}.

\paragraph{Greedy coordinate descent.} Assume a single coordinate $S = \{i\}$ is chosen using the rule stated in \eqref{eq:greedy_rule_smooth}, i.e., the choice maximizes the proportion function on the given iteration. In the smooth case we have the bound  
\eqref{eq:prop_lower_bound_rcd_greedy}, i.e., $\theta(S_k,\bx^k) \geq c \eqdef 1/\sum_{i=1}^n M_{ii}$ . This leads to the rate
\[K \geq \frac{\sum_{i=1}^nM_{ii}}{\mu}  \log \left(\frac{\xi(\bx^0)}{\epsilon}\right) \quad \Rightarrow \quad \xi(\bx^K) \leq \epsilon.\]
Note that this is identical to the rate of randomized coordinate descent with importance sampling, with the exception that we have $\xi(\bx^K)$ instead of $\E[\xi(\bx^K)]$ due to the deterministic nature of the method.
Again, this result was already established in \cite{karimi2016linear}.  In the special case of strongly convex functions, this was first established by Nesterov~\cite{Nesterov:2010RCDM}.

As for the proximal case, we have a bound \eqref{eq:proportion_non-smooth_serial_greedy} given by $\theta(S, \bx^k) \geq c \eqdef 1/n \max_i\{M_{ii}\}$ which leads to a similar rate
\[K \geq \frac{n \max_{i}\{M_{ii}\}}{\mu}  \log \left(\frac{\xi(\bx^0)}{\epsilon}\right) \quad \Rightarrow \quad \xi(\bx^K) \leq \epsilon.\]
This rate is identical to the rate of randomized coordinate descent, except for the expectation on the right-hand side. This result was also estabilished in \cite{karimi2016linear}, while the strongly convex version is due \cite{nutini2015coordinate}.

\paragraph{Greedy minibatches.}

Assume the smooth and and that we choose the set of coordinates $S$ according to the rule described in \eqref{eq:greedy_rule_minibatch_smooth}, which is \[S = S(\bx) \qq{\eqdef} \argmax_{S \;:\; |S| = \tau} \left\{ (\nabla_S f(\bx))^\top (\bM_{S})^{-1} \nabla_S f (\bx) \right\}.\] Assuming that $|S| = \tau$, we can see that this rule maximizes the proportion function for the given iteration. The corresponding lower bound takes the form \eqref{eq:prop_minibatch_greedy} which leads to the rate

\[K \geq \frac{1}{\lambda_{\min}( \E[\bM_{[S]}^{-1}])\mu}  \log \left(\frac{\xi(\bx^0)}{\epsilon}\right) \quad \Rightarrow \quad \xi(\bx^K) \leq \epsilon.\]
Similarly as in the serial case, this bound is identical to uniform minibatches, with the only exception being the dropped expectation on the optimality gap $\xi(\bx)$, due to this algorithm being deterministic.

In the non-smooth case we have the bound on the proportion function given by \eqref{eq:proportion_non-smooth_minibatch_greedy} which leads to the rate
\[K \geq \frac{n L_{\tau}}{\mu \tau}  \log \left(\frac{\xi(\bx^0)}{\epsilon}\right) \quad \Rightarrow \quad \xi(\bx^K) \leq \epsilon.\]
Both of the above bounds are novel, as greedy minibatches is a novel sampling approach.

\section{Weakly Polyak-\L{}ojasiewicz Functions} \label{sec:WPLg}

In this section we introduce a generalized definition of Weakly PL functions, which specifies to Definition~\ref{def:weaklyPL_smooth} for a specific choice of parameters, and also covers the proximal case. We show that proximal convex functions can be analyzed using this framework and we give a general convergence rate guarantee for this class. Lastly, we specify our theory to several known setups, showcasing the generality of our definition.

\subsection{Weakly PL functions}

\begin{definition}[Weakly PL functions: general case] \label{def:weaklyPL-general} 
We say that $F$ is a weakly PL function, if there exists a scalar function $\rho: \R^n \rightarrow \R^+$, such that
\begin{equation} \label{eq:weakly_PL_general} \mu(\bx) \qq{\geq} \rho(\bx^0) \cdot \xi(\bx) \qq{>} 0,
\end{equation} for all $\bx^0, \bx \in {\rm dom}(g) / \cX^*$ such that $\xi(\bx) \leq \xi(\bx^0)$. The collection of all functions $F$ satisfying inequality \eqref{eq:weakly_PL_general} will be denoted $\cW_{PL}^{g}(\rho)$, and we say that $F$ is weakly PL with parameter $\rho$.
\end{definition}

Recall that in the smooth case we said that a function $f \in \cW_{PL}(\mu)$, if $f$ satisfied the condition \eqref{eq:nd09un09s}
\[\mu(\bx) \geq  \frac{\mu \xi(\bx)}{2\|\bx-\bx^*\|^2}, \qquad \bx\in \R^n/\cX^*. \]
In the proof of convergence of these methods, we further bounded the right-hand side of the above expression by $\mu \xi(\bx)/ 2R^2$, where we defined $R\eqdef  \max_{\bx\in \cL_0} \|\bx-\bx^*\| < +\infty$, with $\cL_0\eqdef \{\bx\in \R^n \;:\; f(\bx) \leq f(\bx^0)\}$. To see the above smooth case in our general framework defined in \eqref{eq:weakly_PL_general}, we can use $\rho(\bx^0) = \frac{1}{2R^2},$ which satisfies our assumption, as $R$ does not depend on $\bx$.

\subsection{Weakly convex functions are weakly PL}

We will now show that if $F$ is convex, then $F\in \cW^{g}_{PL}(\rho)$ for some specific $\rho$. This means that weakly PL functions generalize weakly convex functions also in the composite setting.

\begin{theorem} \label{thm:GPL_convex_weakly_convexXXX}
Assume $f$ and $g$ are convex, and let $\bx^*$ be a global minimizer of $F$.  Then \begin{equation} \label{eq:GPL_function_convex}
\mu(\bx) \qq{\geq}  \xi(\bx) \cdot \min \left\{ \frac{1}{2 \xi(\bx)},\frac{1 }{2L\|\bx - \bx^*\|^2} \right\} 
\end{equation}
for all $\bx\in {\rm dom}(g)/\cX^*$. Also, $F$ is a weakly PL function with the parameter $\rho$ given by
\begin{equation} \label{eq:weakly_PL_rho_convergence}
\rho(\bx^0) \qq{=} \min \left\{\frac{L}{2 \xi(\bx^0)}, \frac{1}{2 R^2} \right\},
\end{equation}
where \begin{equation} \label{eq:weakly_PL_R_convergence}
R \qq{\eqdef}  \max_{\bx \in \R^n : f(\bx) \leq f(\bx^0)} \|\bx-\bx^*\| \qq{<} +\infty.
\end{equation}
\end{theorem}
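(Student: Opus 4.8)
The plan is to prove the two displayed inequalities in turn, the second being an easy consequence of the first combined with monotonicity of the algorithm.

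\textbf{Step 1: bounding $\mu(\bx)$ from below.} I would start from the definitions \eqref{eq:lambda_ih8d} and \eqref{eq:GPL_function}, so that $\mu(\bx) = \lambda(\bx)/\xi(\bx)$ with
\[
\lambda(\bx) \qq{=} -L \cdot \min_{\by} \left\{ \langle \nabla f(\bx), \by \rangle + \tfrac{L}{2}\|\by\|^2 + g(\bx+\by) - g(\bx) \right\}.
\]
Since the minimum is over all $\by$, I am free to plug in any convenient test point and get a lower bound on $\lambda(\bx)$; I would use the scaling $\by = -t(\bx - \bx^*)$ for $t \in [0,1]$, which keeps $\bx + \by = (1-t)\bx + t\bx^*$ inside ${\rm dom}(g)$ by convexity. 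Evaluating the bracket at this $\by$ and using convexity of $g$ (so that $g((1-t)\bx + t\bx^*) \le (1-t)g(\bx) + t g(\bx^*)$) and convexity of $f$ (so that $\langle \nabla f(\bx), \bx^* - \bx \rangle \le f(\bx^*) - f(\bx)$), the linear and $g$-terms combine to at most $-t\,\xi(\bx)$, leaving
\[
\lambda(\bx) \qq{\geq} L \cdot \max_{t \in [0,1]} \left\{ t\,\xi(\bx) - \tfrac{L t^2}{2}\|\bx - \bx^*\|^2 \right\}.
\]
The unconstrained maximizer of the inner quadratic is $t^* = \xi(\bx)/(L\|\bx-\bx^*\|^2)$; if $t^* \le 1$ the maximum is $\xi(\bx)^2/(2L\|\bx-\bx^*\|^2)$, while if $t^* > 1$ the maximum is attained at $t=1$ and equals at least $\xi(\bx) - \tfrac{L}{2}\|\bx-\bx^*\|^2 \ge \xi(\bx)/2$ (using $t^*>1$, i.e. $L\|\bx-\bx^*\|^2 < \xi(\bx)$). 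Dividing by $\xi(\bx)$ in both cases gives exactly \eqref{eq:GPL_function_convex}, namely $\mu(\bx) \ge \min\{1/2,\ \xi(\bx)/(2L\|\bx-\bx^*\|^2)\}$ — which matches the stated form $\xi(\bx)\cdot\min\{1/(2\xi(\bx)), 1/(2L\|\bx-\bx^*\|^2)\}$.

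\textbf{Step 2: extracting the WPL parameter $\rho$.} To verify Definition~\ref{def:weaklyPL-general} I must produce $\rho$ with $\mu(\bx) \ge \rho(\bx^0)\xi(\bx) > 0$ whenever $\xi(\bx) \le \xi(\bx^0)$. From Step 1, $\mu(\bx) \ge \min\{1/2,\ \xi(\bx)/(2L\|\bx-\bx^*\|^2)\}$. For the second term, since $\xi(\bx)\le\xi(\bx^0)$ implies $f(\bx) \le f(\bx^0) + g(\bx^0) - g(\bx)$ — here I would be a little careful, but in the $g$-convex setting $\bx$ stays in the relevant sublevel region — we have $\|\bx-\bx^*\| \le R$ with $R$ as in \eqref{eq:weakly_PL_R_convergence}, so $\xi(\bx)/(2L\|\bx-\bx^*\|^2) \ge \xi(\bx)/(2LR^2)$; wait, I should track the constant: rewriting $\tfrac12 = \tfrac{L}{2\xi(\bx^0)}\cdot\tfrac{\xi(\bx^0)}{L} \ge \tfrac{L}{2\xi(\bx^0)}\xi(\bx)\cdot\tfrac{1}{L}$ is not quite it, so instead I bound $\tfrac12 \ge \tfrac{\xi(\bx)}{2\xi(\bx^0)} = \xi(\bx)\cdot\tfrac{1}{2\xi(\bx^0)}$, and the second term $\ge \xi(\bx)\cdot\tfrac{1}{2LR^2}$; hence $\mu(\bx) \ge \xi(\bx)\cdot\min\{\tfrac{1}{2\xi(\bx^0)}, \tfrac{1}{2LR^2}\}$. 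Comparing with \eqref{eq:weakly_PL_rho_convergence} I'd reconcile the factor of $L$ (the paper writes $L/(2\xi(\bx^0))$); this discrepancy likely reflects a slightly different normalization of the first term which I would resolve by keeping the $L$ from the definition of $\lambda$ attached, i.e. not dividing it out when bounding $\tfrac{L}{2}$ against $\tfrac{L\xi(\bx)}{2\xi(\bx^0)}$. Either way, setting $\rho(\bx^0) = \min\{L/(2\xi(\bx^0)), 1/(2R^2)\}$ gives $\mu(\bx)\ge\rho(\bx^0)\xi(\bx)$, and $\rho(\bx^0)>0$ because $\xi(\bx^0)<\infty$ and $R<\infty$.

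\textbf{Expected obstacles.} The genuinely delicate point is Step 2's passage from $\xi(\bx)\le\xi(\bx^0)$ to $\|\bx-\bx^*\|\le R$: the definition of $R$ in \eqref{eq:weakly_PL_R_convergence} uses the sublevel set of $f$ alone ($f(\bx)\le f(\bx^0)$), not of $F$, so I need to argue that the iterates (or the $\bx$'s to which the bound is applied) actually satisfy $f(\bx)\le f(\bx^0)$ — this is where monotonicity of Algorithm~\ref{alg:general}, i.e. $\xi(\bx^{k+1})\le\xi(\bx^k)$ from Lemma~\ref{lem:onestepbound}, together with convexity of $g$ does the work, but it must be invoked explicitly. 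The finiteness $R<\infty$ is an implicit boundedness-of-level-set hypothesis, as in the smooth case. The quadratic-maximization bookkeeping in Step 1 (the two-case split on whether $t^*\lessgtr 1$) is routine but is the computational heart of the argument; I would present it cleanly via the identity $\max_{t\in[0,1]}(at - \tfrac{b}{2}t^2) \ge \min\{\tfrac{a^2}{2b}, a - \tfrac{b}{2}\} \ge \min\{\tfrac{a^2}{2b}, \tfrac{a}{2}\}$ valid for $a,b>0$, applied with $a = \xi(\bx)$, $b = L\|\bx-\bx^*\|^2$.
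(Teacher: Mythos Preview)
Your approach is essentially the same as the paper's. The paper proves a general technical lemma (Lemma~\ref{lem:technical_GPL}) covering the $\lambda_f$- and $\lambda_F$-strongly convex case, then specializes to $\lambda_f=\lambda_F=0$; your Step~1 is exactly that lemma's proof with those parameters set to zero, carried out directly (same test point $\by=t(\bx^*-\bx)$, same use of convexity of $f$ and $g$, same quadratic-in-$t$ maximization with the $t^*\lessgtr 1$ split). Your Step~2 matches the paper's short level-set bound $\xi(\bx)\le\xi(\bx^0)$ and $\|\bx-\bx^*\|\le R$.

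Two of your ``expected obstacles'' are well spotted. First, the factor-of-$L$ confusion you flag is real: the paper's own proof produces $\mu(\bx)\ge\xi(\bx)\cdot\min\{L/(2\xi(\bx)),\,1/(2\|\bx-\bx^*\|^2)\}$, which disagrees with the displayed statement \eqref{eq:GPL_function_convex} by an $L$ (the proof's version is the one consistent with $\rho$ in \eqref{eq:weakly_PL_rho_convergence}); your computation is correct once you retain the $L$ prefactor from $\lambda(\bx)=-L\cdot\min\{\cdots\}$ when dividing by $\xi(\bx)$. Second, your worry that $R$ is defined via the $f$-sublevel set rather than the $F$-sublevel set is legitimate --- the paper glosses over this (``as the function value is bounded'') and the natural fix is to take $R$ over $\{\bx:F(\bx)\le F(\bx^0)\}$, which is what the monotonicity of the algorithm actually delivers.
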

\begin{proof}
See Section~\ref{sec:proof_GPL_convex}.
\end{proof}
Note that in the smooth case we have $\xi(\bx) \leq \frac{L}{2}\|\bx - \bx^*\|^2$, therefore 
\[\min \left\{ \frac{1}{2 \xi(\bx)},\frac{1 }{2L\|\bx - \bx^*\|^2} \right\} \qq{=} \frac{1 }{2L\|\bx - \bx^*\|^2},\]
which also leads to $\rho(\bx^0) = 1/(2R^2)$ in the smooth case.
\subsection{Convergence}

For this class, we can show the following convergence result.
\begin{theorem} \label{thm:WPLg}
Invoke Assumption~\ref{ass:non-convex_setup}. Further, assume $F \in \cW_{PL}^{g}(\rho)$ (i.e., $F$ is weakly PL with parameter $\rho$). Let \begin{equation} \label{eq:WPLg_muk}
\mu_{k} \qq{\eqdef} \rho(\bx^0)\frac{\E[(\xi(\bx^k))^2 \cdot \theta(S_k, \bx^k)]}{(\E[\xi(\bx^k)])^2}.
\end{equation}
Then \begin{equation} \label{eq:WPLg_convergence_rate} 
\sum_{k=0}^{K-1}\mu_k \geq \frac{1}{\epsilon} \quad \Rightarrow \quad \E[\xi(\bx^K)] \leq \epsilon.
\end{equation}

\end{theorem}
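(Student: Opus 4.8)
The plan is to mirror the smooth-case analysis of Section~\ref{sec:GD}: produce a one-step contraction of the form $\alpha^{k+1} \leq (1-\mu_k\alpha^k)\alpha^k$ for the expected optimality gap $\alpha^k \eqdef \E[\xi(\bx^k)]$, and then feed the resulting recursion into Lemma~\ref{lem:onestepbound_xi}.

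First I would record that Algorithm~\ref{alg:general} is monotonic pathwise: since $\theta(S_k,\bx^k)\geq 0$ and $\mu(\bx^k)\geq 0$, Lemma~\ref{lem:onestepbound} gives $\xi(\bx^{k+1})\leq\xi(\bx^k)$ for every realization, hence $\xi(\bx^k)\leq\xi(\bx^0)$ for all $k$. This is exactly the regime in which the weakly PL inequality \eqref{eq:weakly_PL_general} is valid, so for iterates outside $\cX^*$ we may substitute $\mu(\bx^k)\geq\rho(\bx^0)\,\xi(\bx^k)$ into Lemma~\ref{lem:onestepbound}; on $\cX^*$ the inequality below is trivial (both sides vanish). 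In either case
\[\xi(\bx^{k+1}) \qq{\leq} \xi(\bx^k) - \rho(\bx^0)\,\theta(S_k,\bx^k)\,\big(\xi(\bx^k)\big)^2.\]

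Next I would take total expectations on both sides. With $\alpha^k=\E[\xi(\bx^k)]$ (so $\alpha^0=\xi(\bx^0)$ since $\bx^0$ is deterministic, and with the trivial case $\alpha^k=0$ dispatched separately, as then $\alpha^K=0$ by monotonicity), the definition \eqref{eq:WPLg_muk} gives $\rho(\bx^0)\,\E[\theta(S_k,\bx^k)(\xi(\bx^k))^2]=\mu_k(\alpha^k)^2$, so the expectation inequality becomes precisely
\[\alpha^{k+1} \qq{\leq} \alpha^k - \mu_k(\alpha^k)^2 \qq{=} (1-\mu_k\alpha^k)\,\alpha^k,\]
which is recursion \eqref{eq:lem:onestepbound} for the sequences $\{\alpha^k\}$ and $\{\mu_k\}$. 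Applying Lemma~\ref{lem:onestepbound_xi} yields $\alpha^K\leq\alpha^0/(1+\alpha^0\sum_{k=0}^{K-1}\mu_k)$, and if $\sum_{k=0}^{K-1}\mu_k\geq 1/\epsilon$ then $\alpha^K\leq\alpha^0/(1+\alpha^0/\epsilon)=\epsilon/(1+\epsilon/\alpha^0)\leq\epsilon$, which is the claim.

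The two expectation manipulations are routine; the one point that needs care is the pathwise monotonicity used to license \eqref{eq:weakly_PL_general} at every iterate (for stochastic $\cP_k$ this must hold almost surely, which it does thanks to nonnegativity of $\theta$ and $\mu$), together with the degenerate cases where some $\alpha^k$ or $\mu_k$ vanishes. For the latter one either stops early (if $\alpha^k=0$) or simply observes that the argument of Lemma~\ref{lem:onestepbound_xi} still goes through with $\mu_k\geq 0$ whenever $\alpha^k>0$, since $\tfrac{1}{\alpha^{k+1}}-\tfrac{1}{\alpha^k}\geq\tfrac{\alpha^k-\alpha^{k+1}}{(\alpha^k)^2}\geq\mu_k$.
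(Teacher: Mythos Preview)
Your proposal is correct and follows essentially the same route as the paper: combine the one-step descent from Lemma~\ref{lem:onestepbound} with the weak PL bound \eqref{eq:weakly_PL_general}, take expectations to obtain $\E[\xi(\bx^{k+1})]\leq(1-\mu_k\E[\xi(\bx^k)])\E[\xi(\bx^k)]$, and then invoke Lemma~\ref{lem:onestepbound_xi}. If anything, your write-up is slightly more careful than the paper's own proof, since you explicitly justify the use of \eqref{eq:weakly_PL_general} via pathwise monotonicity and handle the degenerate cases $\alpha^k=0$ and $\mu_k=0$, which the paper leaves implicit.
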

\begin{proof}
See Section~\ref{sec:proof_WPLg_general}.
\end{proof}

Similarly as in the previous section, we need to bound the quantity $\sum_{k=0}^K \mu_k$ in $K$ to get a complexity result. The standard theory is developed by bounding each of $\mu_k$ separately, although this is apparently not the optimal way.

In the case that the blocks are selected deterministically, then the expectations in \eqref{eq:WPLg_muk} do not play any role and we have $\mu_k = \rho(\bx^0) \theta(S_k, \bx^k)$. Additionally, if we have a global lower bound $\theta(S_k, \bx^k) \geq c > 0$, then $\sum_{k=0}^{K-1}\mu_k \geq \rho(\bx^0) c K$, which implies the rate \[K \geq \frac{1}{\rho(\bx^0) c \epsilon} \qq{\Rightarrow} \xi(\bx^K) \leq \epsilon\]
and equivalently $\xi(\bx^K) \leq \frac{1}{\rho(\bx^0) c K}$.

We get a similar result for stochastic block selection. Specifically, if we have a global bound for the quantity $\E[\theta(S_k, \bx^k) ~|~ \bx^k] \geq c > 0,$ we can lower bound the values of $\mu_k$ in Theorem~\ref{thm:WPLg} by $\rho(
\bx^0) c$. This can be done by first lower bounding the denominator of \eqref{eq:WPLg_muk} by $\E[(\xi(\bx^k))^2]$ using the trivial bound $\E[X]^2 \leq \E[X^2]$ and further applying Lemma~\ref{lem:onestep_expectation} with $X := \bx^k, Y := \theta_k, f(\bx) = (\xi(\bx))^2$.

In general, we can claim convergence if we have a sequence of lower bounds $\{c_k\}_{k=0}^\infty$ for $\theta(S_k, \bx^k) \geq c_k$ in the deterministic case or $\E[\theta(S_k, \bx^k) ~|~ \bx^k] \geq c_k$ in the stochastic case, and additionally $\sum_{k=0}^\infty c_k = \infty$.

\subsection{Applications of Theorem~\ref{thm:WPLg}}

We now showcase the use of Theorem~\ref{thm:WPLg} on several methods which arise as special cases of our generic method (Algorithm~\ref{eq:US_min_nonsmooth}). All the results for general $\cW_{PL}^g$ functions are novel, as the notion itself is novel. In most cases, we recover known theory by specializing the results to convex objectives.

\paragraph{Proximal gradient descent.}
In both smooth and non-smooth case we have $\theta([n],\bx^k) \geq c \eqdef 1/\lambda_{\max}(\bM) $ (see \eqref{eq:prop_lower_bound_smooth_batch} and \eqref{eq:proportion_ISTA}). Substituting into \eqref{eq:WPLg_convergence_rate}, we get the rate
\[K  \geq \frac{\lambda_{\max}(\bM)}{\rho(\bx^0)\epsilon} \quad \Rightarrow \quad \xi(\bx^K) \leq \epsilon,\]
in both the smooth and non-smooth case. This result is novel for Weakly PL functions, but it was estabilished before for convex functions in \cite{nesterov2013introductory}.

\paragraph{Randomized coordinate descent with uniform probabilities.}

The bound on the quantity $\E[\theta(S_k, \bx^k ~|~ \bx^k)]$ is readily available in Section~\ref{sec:proportion}, specifically in \eqref{eq:prop_lower_bound_rcd_uniform} for the smooth case and \eqref{eq:proportion_non-smooth_serial_uniform} in the non-smooth case, and it takes the form $\E[\theta(S_k, \bx^k)  ~|~ \bx^k] \geq c  \eqdef \tfrac{1}{ n \max_i \{M_{ii}\} }.$ As discussed right after Theorem~\ref{thm:WPLg},   this implies the bound $\mu_k \geq \mu c$. Applying Theorem~\ref{thm:WPLg},  we conclude that \[
K\geq  \frac{n \max\{M_{ii}\}}{\rho(\bx^0)\epsilon} \quad \Rightarrow \quad \E[\xi(\bx^K)] \leq \epsilon,
\]
which is novel for weakly PL functions, but it is well known for convex objectives \cite{Nesterov:2010RCDM}.

\paragraph{Randomized coordinate descent with importance sampling}

In view of  \eqref{eq:prop_lower_bound_rcd_importance}, we get the lower bound $\E[\theta(S_k, \bx^k ~|~ \bx^k)] \geq c \eqdef 1/\sum_{i=1}^n M_{ii}$ for smooth functions. This leads to the complexity result
\[K  \geq \frac{\sum_{i=1}^nM_{ii}}{\rho(\bx^0) \epsilon} \quad \Rightarrow \quad \E[\xi(\bx^K)] \leq \epsilon,\]
which is a novel result for weakly PL functions. However, in the special case of convex functions, this result is known \cite{Nesterov:2010RCDM}.

\paragraph{Minibatch coordinate descent.}
 In view of \eqref{eq:prop_sdna_sampling}, we have the lower bound $\E[\theta(S, \bx) ~|~ \bx] \geq  \lambda_{\min}\left( \E\left[\bM_{[S]}^{-1}\right]\right)$ in the smooth case. Substituting into \eqref{eq:WPLg_convergence_rate}, we get the rate
\[K  \geq \frac{1}{\rho(\bx^0) \lambda_{\min}(\E[\bM_{S}^{-1}]) \epsilon} \quad \Rightarrow \quad \E[\xi(\bx^K)] \leq \epsilon.\]
This result is novel for the weakly PL case and to the best of our knowledge also for the convex case, as the results in \cite{SDNA} only consider strongly convex objectives.

In the non-smooth case we can use the bound \eqref{eq:proportion_non-smooth_minibatch} given by $\E[\theta(S, \bx) ~|~ \bx] \geq \tau/nL_{\tau}$, where $L_\tau$ is given by \eqref{eq:L_general}. It follows that we have the guarantee \[K  \geq \frac{n L_{\tau}}{\rho(\bx^0) \tau \epsilon} \quad \Rightarrow \quad \E[\xi(\bx^K)] \leq \epsilon,\]
which is novel for weakly PL and to the best of our knowledge also for weakly convex functions, as $L_\tau$ is less or equal to the standard $L_n$ \cite{PCDM}.

\paragraph{Greedy coordinate descent.} In the smooth case we have the bound  
\eqref{eq:prop_lower_bound_rcd_greedy}, i.e., $\theta(S_k,\bx^k) \geq c \eqdef 1/\sum_{i=1}^n M_{ii}$ . This leads to the rate
\[K \geq \frac{\sum_{i=1}^nM_{ii}}{\rho(\bx^0) \epsilon} \quad \Rightarrow \quad \xi(\bx^K) \leq \epsilon.\]
This is identical to the above rate of randomized coordinate descent with importance sampling, with a dropped expectation on $\xi(\bx^0)$. This result is new for weakly PL functions and in the special case of weakly convex functions, it was first established by Nesterov~\cite{Nesterov:2010RCDM}.

In the proximal case, we have a bound \eqref{eq:proportion_non-smooth_serial_greedy} given by $\theta(S, \bx^k) \geq c \eqdef 1/n \max_i\{M_{ii}\}$ which leads to the rate
\[K \geq \frac{n \max_{i}\{M_{ii}\}}{\rho(\bx^0) \epsilon}  \quad \Rightarrow \quad \xi(\bx^K) \leq \epsilon.\]
This rate is identical to the rate of randomized coordinate descent, except for the expectation on the right-hand side. This result is novel for weakly PL and to the best of our knowledge it is also novel for the special case of convex functions.

\paragraph{Greedy minibatches.}

In the smooth case we have the bound \eqref{eq:prop_minibatch_greedy} which leads to the rate
\[K  \geq \frac{1}{\rho(\bx^0) \lambda_{\min}(\E[\bM_{S}^{-1}]) \epsilon} \quad \Rightarrow \quad \E[\xi(\bx^K)] \leq \epsilon.\]

In the non-smooth case we have the bound on the proportion function given by \eqref{eq:proportion_non-smooth_minibatch_greedy} which leads to the rate
\[K  \geq \frac{n L_{\tau}}{\rho(\bx^0) \tau \epsilon} \quad \Rightarrow \quad \E[\xi(\bx^K)] \leq \epsilon.\]
Both of the above results are shared with uniform minibatch coordinate descent, except that we are bounding the optimality gap itself instead of its expectation.

Both of the above rates are novel, as greedy minibatches constitute a novel sampling approach.

\section{General Nonconvex Functions} \label{sec:general_nonconvex}

In this section we establish a generic convergence result applicable to general nonconvex functions. This is done at the expense of losing global optimality: we will show that either $\lambda(\bx^k)$ gets small, or that $F(\bx^k)$ is close to the global minimum $F(\bx^*)$. Recall that in the smooth case ($g=0$)  we have $\lambda(\bx^k) = \tfrac{1}{2}\|\nabla f(\bx^k)\|^2$.

\begin{theorem} \label{thm:non-convex_case}
Invoke Assumption~\ref{ass:non-convex_setup}. Let $\epsilon>0$ be fixed. Further, let
\begin{equation} \label{eq:non-convex_muk}
\mu_k \qq{\eqdef} \frac{\E[\xi(\bx^k) \theta(S_k, \bx^k)]}{\E[\xi(\bx^k)]}. 
\end{equation}

If the inequality
\begin{equation} \label{eq:convergence_non-convex_condition}
\frac{\xi(\bx^0)}{\epsilon} \log \left( \frac{\xi(\bx^0)}{\epsilon} \right) \qq{\leq} \sum_{k=0}^{K-1} \mu_k,
\end{equation}
holds, then at least one of the following conclusions holds: \begin{enumerate}
\item[(i)] $\lambda(\bx^k)< \epsilon$ for at least one $k \in \{0, \dots, K-1\}$, 
\item[(ii)] $\E[\xi(\bx^K)] \leq \epsilon$.
\end{enumerate}

\end{theorem}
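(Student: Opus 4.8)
The plan is to prove the statement in the form of its contrapositive on conclusion (i): assuming $\lambda(\bx^k)\geq\epsilon$ for every $k\in\{0,\dots,K-1\}$ (for every realization of the block selection, in the stochastic case), I would deduce conclusion (ii). Before that I would dispose of the trivial case $\xi(\bx^0)\leq\epsilon$: since $\bx^0$ is deterministic and Lemma~\ref{lem:onestepbound} gives the pathwise monotonicity $\xi(\bx^{k+1})\leq\xi(\bx^k)$ (because $\theta\geq 0$ and $\mu\geq 0$), one immediately gets $\E[\xi(\bx^K)]\leq\xi(\bx^0)\leq\epsilon$, i.e.\ (ii). So from now on $\xi(\bx^0)>\epsilon$, which is exactly what makes the logarithm in hypothesis~\eqref{eq:convergence_non-convex_condition} positive. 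Note also that $\lambda(\bx^k)\geq\epsilon>0$ forces $\bx^k\notin\cX^*$ for $k\leq K-1$, so the forcing function $\mu(\bx^k)$ is well defined along the relevant part of the trajectory.

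The core of the argument starts from the generic descent inequality $\xi(\bx^{k+1})\leq[1-\theta(S_k,\bx^k)\mu(\bx^k)]\xi(\bx^k)$ of Lemma~\ref{lem:onestepbound}. First I would rewrite the per-step factor through the definition $\mu(\bx)=\lambda(\bx)/\xi(\bx)$ of the proximal forcing function~\eqref{eq:GPL_function}, so that $\theta(S_k,\bx^k)\mu(\bx^k)=\theta(S_k,\bx^k)\lambda(\bx^k)/\xi(\bx^k)$. Then I would use the two facts available under the failure of (i), namely $\lambda(\bx^k)\geq\epsilon$ and $\xi(\bx^k)\leq\xi(\bx^0)$ (monotonicity), to get $\theta(S_k,\bx^k)\mu(\bx^k)\geq\tfrac{\epsilon}{\xi(\bx^0)}\theta(S_k,\bx^k)$, hence $\xi(\bx^{k+1})\leq(1-\tfrac{\epsilon}{\xi(\bx^0)}\theta(S_k,\bx^k))\xi(\bx^k)$. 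Next I would take expectations and invoke the very definition~\eqref{eq:non-convex_muk} of $\mu_k$, which is precisely the identity $\E[\theta(S_k,\bx^k)\xi(\bx^k)]=\mu_k\E[\xi(\bx^k)]$, to obtain $\E[\xi(\bx^{k+1})]\leq(1-\tfrac{\epsilon\mu_k}{\xi(\bx^0)})\E[\xi(\bx^k)]$ for $k=0,\dots,K-1$. Unrolling this geometric-type recursion, bounding $1-s\leq e^{-s}$, and using $\E[\xi(\bx^0)]=\xi(\bx^0)$, I would reach $\E[\xi(\bx^K)]\leq\xi(\bx^0)\exp(-\tfrac{\epsilon}{\xi(\bx^0)}\sum_{k=0}^{K-1}\mu_k)$, into which hypothesis~\eqref{eq:convergence_non-convex_condition} — $\sum_{k=0}^{K-1}\mu_k\geq\tfrac{\xi(\bx^0)}{\epsilon}\log(\tfrac{\xi(\bx^0)}{\epsilon})$ — plugs directly to give $\E[\xi(\bx^K)]\leq\xi(\bx^0)\cdot\tfrac{\epsilon}{\xi(\bx^0)}=\epsilon$. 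This is the same skeleton as the proof of the gradient-dominated theorem in Section~\ref{sec:GD}, with the forcing and proportion functions taking over the roles played there by $\mu(\bx^k)$ and $1/L$.

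The main obstacle I anticipate is bookkeeping around the expectations rather than any genuine difficulty: I must make sure the substitution $\E[\theta(S_k,\bx^k)\xi(\bx^k)]=\mu_k\E[\xi(\bx^k)]$ is exactly the content of~\eqref{eq:non-convex_muk} for each $k$, and that monotonicity is available along every realization so that $\xi(\bx^k)\leq\xi(\bx^0)$ may be inserted inside the expectation before taking it. There is also a minor sign caveat to record: if some factor $1-\epsilon\mu_k/\xi(\bx^0)$ were negative, then $\E[\xi(\bx^{k+1})]=0$ (as $\xi\geq 0$), and monotonicity forces $\E[\xi(\bx^K)]=0\leq\epsilon$, so one may assume without loss of generality that every factor lies in $[0,1]$. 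If a cleaner conditional bound is preferred, Lemma~\ref{lem:onestep_expectation} from the appendix can be invoked in place of the direct expectation computation, exactly as in the discussions following Theorems~\ref{thm:SPLg} and~\ref{thm:WPLg}.
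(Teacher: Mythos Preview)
Your proposal is correct and follows essentially the same route as the paper's proof: assume (i) fails so that $\lambda(\bx^k)\geq\epsilon$, bound $\mu(\bx^k)\geq\epsilon/\xi(\bx^0)$ via monotonicity, insert this into the one-step descent inequality of Lemma~\ref{lem:onestepbound}, take expectations and use the definition of $\mu_k$, then unroll with $1-s\leq e^{-s}$ and apply~\eqref{eq:convergence_non-convex_condition}. Your explicit handling of the trivial case $\xi(\bx^0)\leq\epsilon$ and the sign caveat on $1-\epsilon\mu_k/\xi(\bx^0)$ are small but welcome additions that the paper leaves implicit.
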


\begin{proof}
See Section~\ref{sec:proof_non-convex_case}.
\end{proof}

In the smooth case ($g = 0$), condition $(i)$ reduces to  $\tfrac{1}{2}\|\nabla f(\bx^k)\|^2 \leq \epsilon$ for at least one $k \in \{0, \dots, K-1\}$. Also note that the same strategies for bounding $\mu_k$ as those outlined in \ref{sec:8h9d8h8fsd} apply here. To sum it up, if we have a global bound $\theta(S_k, \bx^k) \geq c > 0$ in the deterministic case, then it follows from Theorem~\ref{thm:non-convex_case} that \[K \geq \frac{\xi(\bx^0)}{c\epsilon}\log\left( \frac{\xi(\bx^0)}{\epsilon}\right) \qq{\Rightarrow} \left( (\xi(\bx^K) \leq \epsilon) \vee (\exists k \in [K] : \lambda(\bx^k) \leq \epsilon)\right).\]
Similarly, if we have a bound $\E[\theta(S_k,\bx^k) ~|~ \bx^k] \geq c > 0$ in the stochastic case, we get the same result as above with an expectation over the optimality gap $\E[\xi(\bx^K)]$, as the optimality gap becomes a random variable. 
\subsection{Applications of Theorem~\ref{thm:non-convex_case}} \label{sec:general_non-convex_applications}

In this part we apply the results from Theorem~\ref{thm:non-convex_case} to several known methods to acquire local convergence guarantees. To the best of our knowledge, all the results in this section are novel.

\paragraph{Proximal gradient descent.}
In both smooth and non-smooth case we have $\theta([n],\bx^k) \geq c \eqdef 1/\lambda_{\max}(\bM) $ (see \eqref{eq:prop_lower_bound_smooth_batch} and \eqref{eq:proportion_ISTA}). Substituting into \eqref{eq:convergence_non-convex_condition}, we get the rate
\[K \geq \frac{\lambda_{\max}(\bM)\xi(\bx^0)}{\epsilon}\log\left( \frac{\xi(\bx^0)}{\epsilon}\right) \qq{\Rightarrow} \left( (\xi(\bx^K) \leq \epsilon) \vee (\exists k \in [K] : \lambda(\bx^k) \leq \epsilon)\right)\]
in both the smooth and non-smooth case.

\paragraph{Randomized coordinate descent with uniform probabilities.}

In both smooth and non-smooth case we have the bound $\E[\theta(S_k, \bx^k)  ~|~ \bx^k] \geq c  \eqdef \tfrac{1}{ n \max_i \{M_{ii}\} }$ (see \eqref{eq:prop_lower_bound_rcd_uniform} and \eqref{eq:proportion_non-smooth_serial_uniform}). This implies the rate \[K \geq \frac{\max_{i}\{M_{ii}\}\xi(\bx^0)}{\epsilon}\log\left( \frac{\xi(\bx^0)}{\epsilon}\right) \qq{\Rightarrow} \left( (\E[\xi(\bx^K)] \leq \epsilon) \vee (\exists k \in [K] : \lambda(\bx^k) \leq \epsilon)\right)\]
for both smooth and non-smooth functions.

\paragraph{Randomized coordinate descent with importance sampling}

In view of  \eqref{eq:prop_lower_bound_rcd_importance}, we have that $\E[\theta(S_k, \bx^k ~|~ \bx^k)] \geq c \eqdef 1/\sum_{i=1}^n M_{ii}$ for smooth functions. This leads to the complexity result
\[K \geq \frac{\xi(\bx^0)\sum_{i=1}^nM_{ii}}{\epsilon}\log\left( \frac{\xi(\bx^0)}{\epsilon}\right) \qq{\Rightarrow} \left( (\E[\xi(\bx^K)] \leq \epsilon) \vee (\exists k \in [K] : \lambda(\bx^k) \leq \epsilon)\right).\]

\paragraph{Minibatch coordinate descent.}
We have the bound $\E[\theta(S, \bx) ~|~ \bx] \geq  \lambda_{\min}\left( \E\left[\bM_{[S]}^{-1}\right]\right)$ in the smooth case from \eqref{eq:prop_sdna_sampling}. Substituting into \eqref{eq:convergence_non-convex_condition}, we get the rate
\[K \geq \frac{\xi(\bx^0)}{\epsilon \lambda_{\min}( \E[\bM_{[S]}^{-1}])}\log\left( \frac{\xi(\bx^0)}{\epsilon}\right) \qq{\Rightarrow} \left( (\E[\xi(\bx^K)] \leq \epsilon) \vee (\exists k \in [K] : \lambda(\bx^k) \leq \epsilon)\right)\]
for smooth objectives. In the non-smooth case we can use the bound \eqref{eq:proportion_non-smooth_minibatch} given by $\E[\theta(S, \bx) ~|~ \bx] \geq \tau/nL_{\tau}$, where $L_\tau$ is given by \eqref{eq:L_general}. It follows that we get the rate \[K \geq \frac{\xi(\bx^0) n L_{\tau}}{\tau \epsilon}\log\left( \frac{\xi(\bx^0)}{\epsilon}\right) \qq{\Rightarrow} \left( (\E[\xi(\bx^K)] \leq \epsilon) \vee (\exists k \in [K] : \lambda(\bx^k) \leq \epsilon)\right)\]
for non-smooth functions.

\paragraph{Greedy coordinate descent.} In the smooth case we have the bound  
\eqref{eq:prop_lower_bound_rcd_greedy}, i.e., $\theta(S_k,\bx^k) \geq c \eqdef 1/\sum_{i=1}^n M_{ii}$ . This leads to the rate
\[K \geq \frac{\xi(\bx^0)\sum_{i=1}^nM_{ii}}{\epsilon}\log\left( \frac{\xi(\bx^0)}{\epsilon}\right) \qq{\Rightarrow} \left( (\xi(\bx^K) \leq \epsilon) \vee (\exists k \in [K] : \lambda(\bx^k) \leq \epsilon)\right)\]
for smooth functions. In the proximal case, we have a bound \eqref{eq:proportion_non-smooth_serial_greedy} given by $\theta(S, \bx^k) \geq c \eqdef 1/n \max_i\{M_{ii}\}$ which leads to the rate
\[K \geq \frac{\max_{i}\{M_{ii}\}\xi(\bx^0)}{\epsilon}\log\left( \frac{\xi(\bx^0)}{\epsilon}\right) \qq{\Rightarrow} \left( (\xi(\bx^K) \leq \epsilon) \vee (\exists k \in [K] : \lambda(\bx^k) \leq \epsilon)\right)\]
for non-smooth functions.

\paragraph{Greedy minibatches.}

In the smooth case we have the bound \eqref{eq:prop_minibatch_greedy} which leads to the rate
\[K \geq \frac{\xi(\bx^0)}{\epsilon \lambda_{\min}( \E[\bM_{[S]}^{-1}])}\log\left( \frac{\xi(\bx^0)}{\epsilon}\right) \qq{\Rightarrow} \left( (\xi(\bx^K) \leq \epsilon) \vee (\exists k \in [K] : \lambda(\bx^k) \leq \epsilon)\right)\]

In the non-smooth case we have the bound on the proportion function given by \eqref{eq:proportion_non-smooth_minibatch_greedy} which leads to the rate
\[K \geq \frac{\xi(\bx^0) n L_{\tau}}{\tau \epsilon}\log\left( \frac{\xi(\bx^0)}{\epsilon}\right) \qq{\Rightarrow} \left( (\xi(\bx^K) \leq \epsilon) \vee (\exists k \in [K] : \lambda(\bx^k) \leq \epsilon)\right)\]
Uniform minibatch coordinate descent has bounds of the same form, except that the expectation is missing due to the deterministic nature of the method.

Also, both of the above rate are new, as greedy minibatches was introduced as a novel sampling approach.

\section{Experiments} \label{sec:experiments}

In this section we show results of sample numerical experiments. We will focus on showcasing the theory of the general non-convex optimization methods introduced in Section~\ref{sec:general_nonconvex}. 

\subsection{Setup}

For our experiments, we consider a function defined as in \eqref{eq:prox_problem} with $f$ defined as
\begin{equation} \label{eq:experiments_f}
f(\bx) \qq{\eqdef} \frac{1}{2m} \|\mathbf{A}\bx - \mathbf{b}\|^2 + \frac{1}{m} \cos( \langle \mathbf{c}, \bx \rangle)
\end{equation}
with $\bA \in \R^{m \times n}$ and $\bb, \mathbf{c} \in \R^n$, and the function $g$ defined as \begin{equation} \label{eq:experiments_g}
g(\bx) \qq{\eqdef} \lambda \|\bx\|_1.
\end{equation}
While it is not motivated by any specific problem, it is clearly non-convex and we can easily control it to observe the behavior of the proposed method. 

\subsection{Global convergence of serial coordinate descent}

In the first part, we consider the setup defined in the above section using \eqref{eq:experiments_f} and \eqref{eq:experiments_g} with $m = 1000$ and $n = 100$. We generate $\bA \in \R^{m \times n}$ as a random matrix with fixed singular values linearly spaced between $\frac{1}{m}$ and $1$. The vector $\bb \in \R^m$ is set to $\bb = \bA \by$ for a vector $\by \in \R^n$ randomly generated from a normalized Gaussian distribution. Similarly, $\mathbf{c}$ is also randomly generated from a normalized Gaussian. We consider two different problems, based on the value of $\lambda$. We have a smooth problem for $\lambda = 0$ and a non-smooth problem for $\lambda = \frac{1}{2m}$.

We measure the performance of the various coordinate descent approaches described in Section~\ref{sec:general_non-convex_applications}. The convergence behaviors corresponding to the smooth and non-smooth setup can be found in Figure~\ref{fig:serial_convergences}, on the left and right plots, respectively.

To make sure that the functions being optimized are indeed non-convex, we plotted a 1-dimensional slice of the function being optimized around its optimum. These plots can be found on Figure~\ref{fig:serial_reliefs}, for the smooth and non-smooth version respectively.

\begin{figure}[bt]
\begin{subfigure}{0.49\linewidth}
\includegraphics[width=\linewidth]{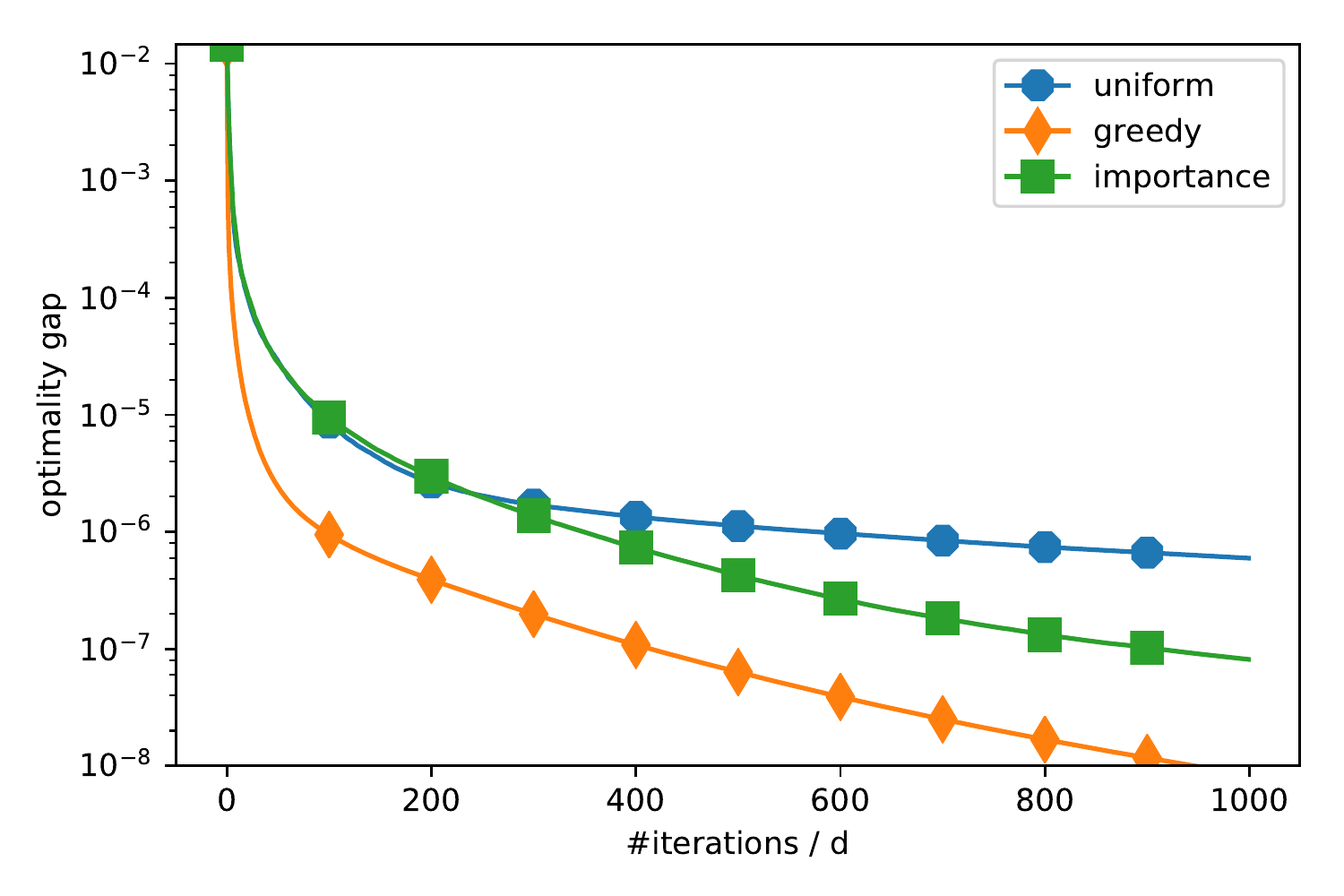}
\end{subfigure}
\begin{subfigure}{0.49\linewidth}
\includegraphics[width=\linewidth]{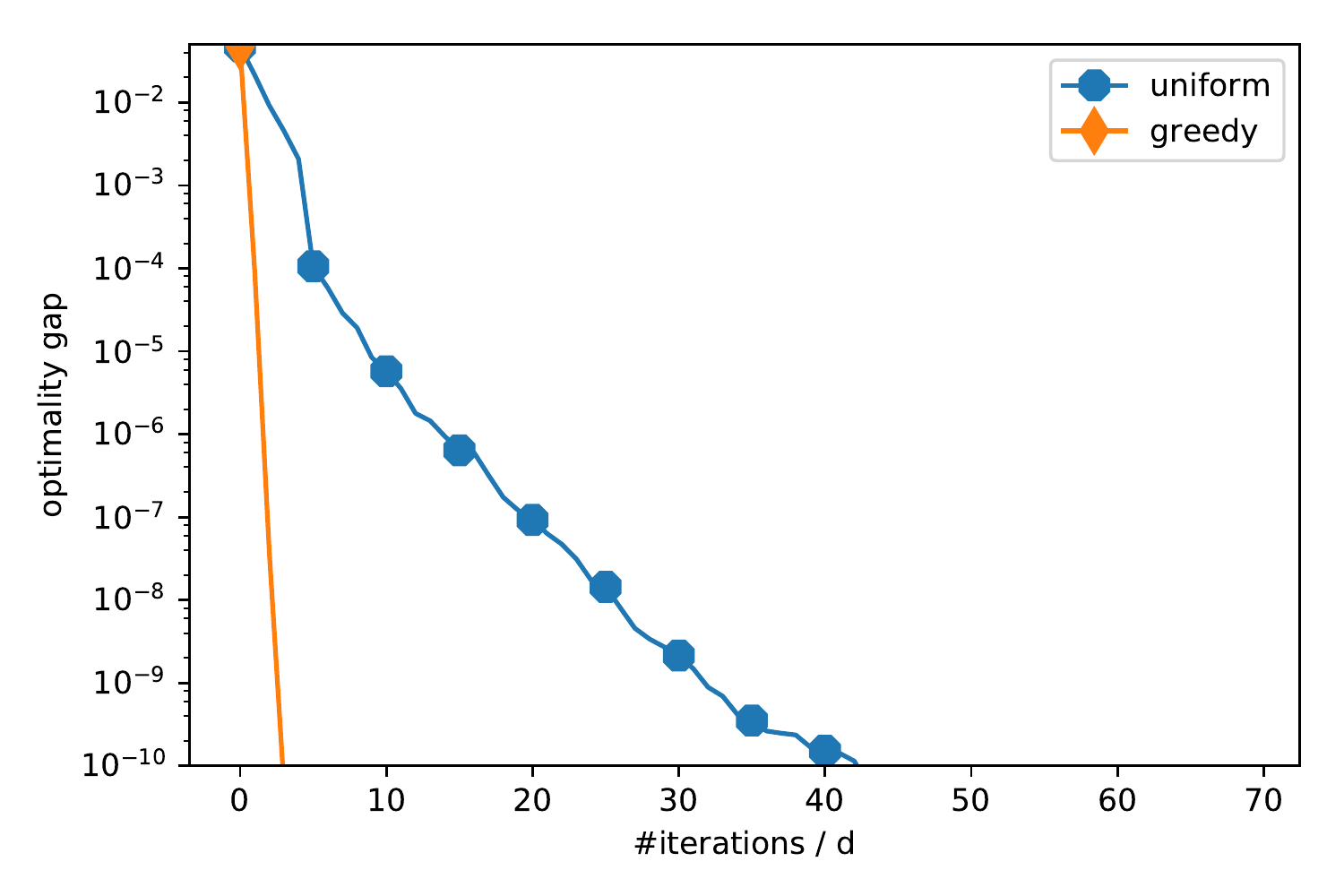}
\end{subfigure}
\caption{Plots of the convergences of various sampling methods for the smooth (left) and non-smooth (right) experiment.}
\label{fig:serial_convergences}
\end{figure}

\begin{figure}[bt]
\begin{subfigure}{0.49\linewidth}
\includegraphics[width=\linewidth]{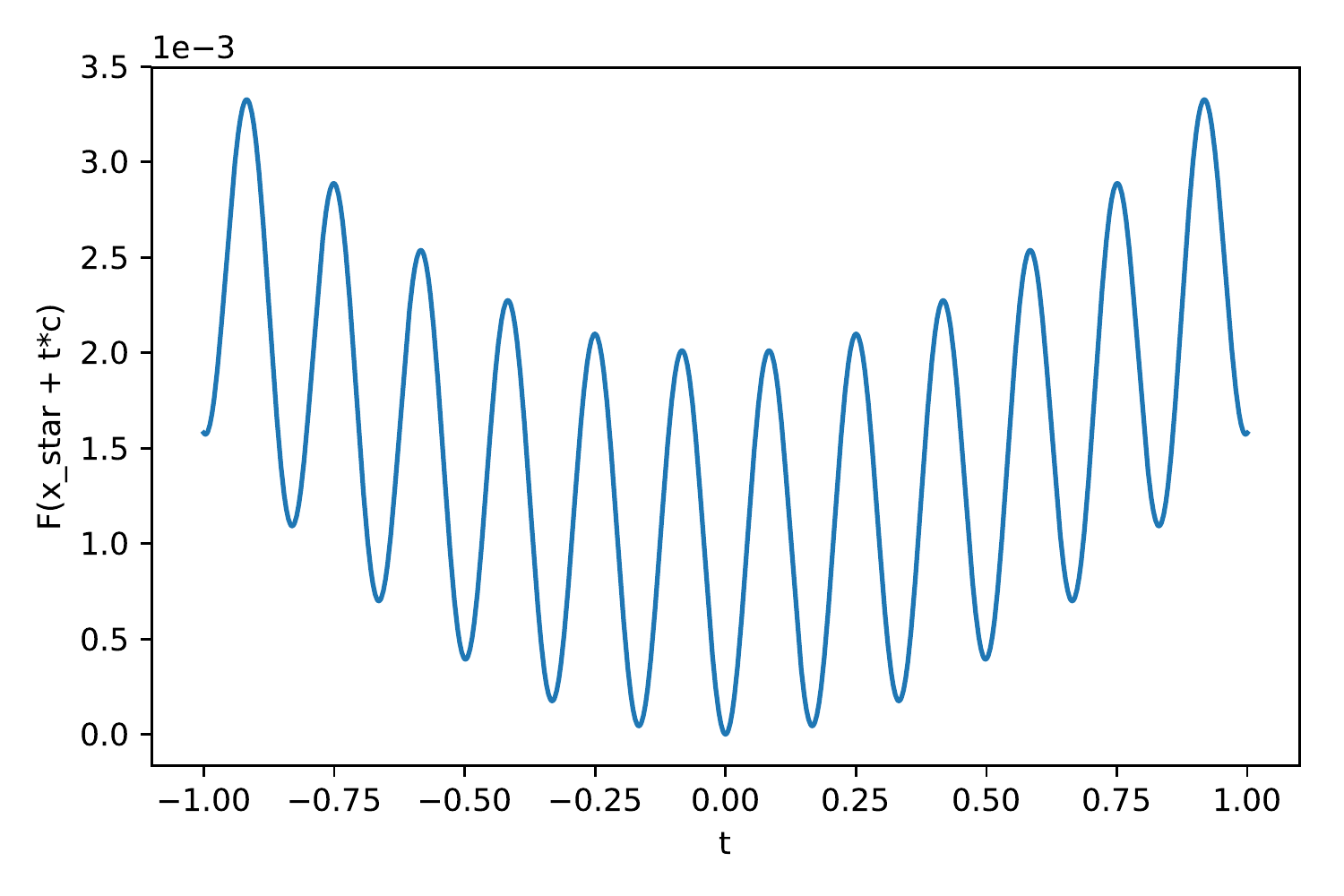}
\end{subfigure}
\begin{subfigure}{0.49\linewidth}
\includegraphics[width=\linewidth]{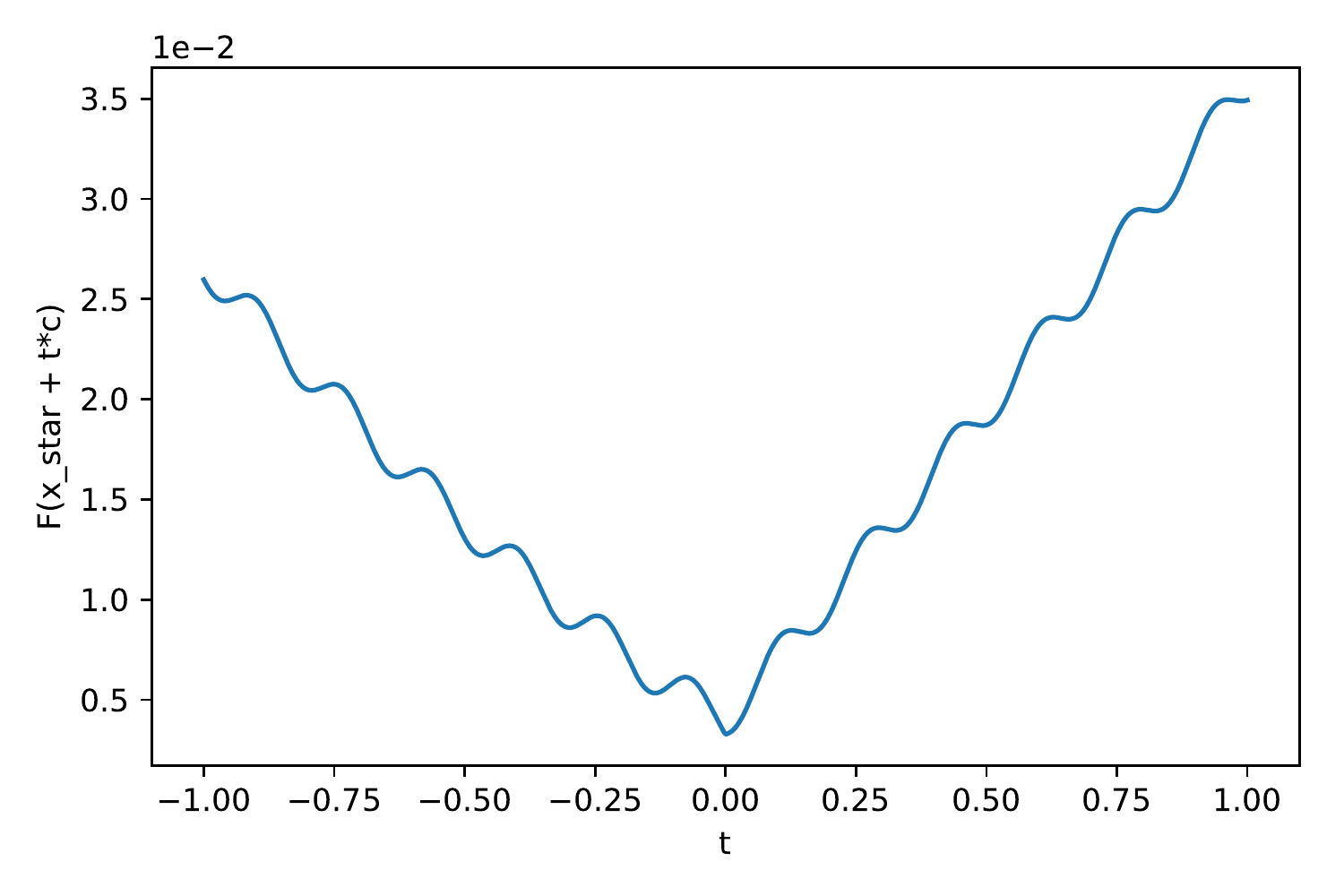}
\end{subfigure}
\caption{Plots of the functions in a 1-dimensional slice around the optimum for the smooth (left) and non-smooth (right) experiment.}
\label{fig:serial_reliefs}
\end{figure}

\subsection{Local convergence of the gradient}

The theory of the non-convex case in Theorem~\ref{thm:non-convex_case} does not always guarantee convergence to the global optimum, but it at least guarantees a convergence of the magnitude of the gradient. To showcase this scenario, we focused on a 1-dimensional instance of the smooth problem defined in \eqref{eq:experiments_f}. We set $\bA = [1]$, $\bb = \tfrac{\pi}{c}$ and we have chosen $c$ such that the function $f$ has a flat inflection point ($c \approx 2.15$). It is trivial to show, that the optimal value is 0 and it is achieved for $x = \tfrac{\pi}{c} \approx 1.46$. The shape of the function around the optimum can be found on the left plot of Figure~\ref{fig:1d_example}.

We used a 1-dimensional gradient descent method for the convergence and we reported on three quantities: The function suboptimality (\textit{fx}), the magnitude of the gradient (\textit{dfx}) and the rate predicted by the theory (\textit{rate}). Theory states that either the function suboptimality or the magnitude of the gradient has to be below the predicted rate, which is what we observe in the right plot of Figure~\ref{fig:1d_example} as well. Note that the theory focuses on worst case bounds, which is possibly the case why the difference between the rate and the magnitudes is so huge.

\begin{figure}[bt]
\begin{subfigure}{0.49\linewidth}
\includegraphics[width=\linewidth]{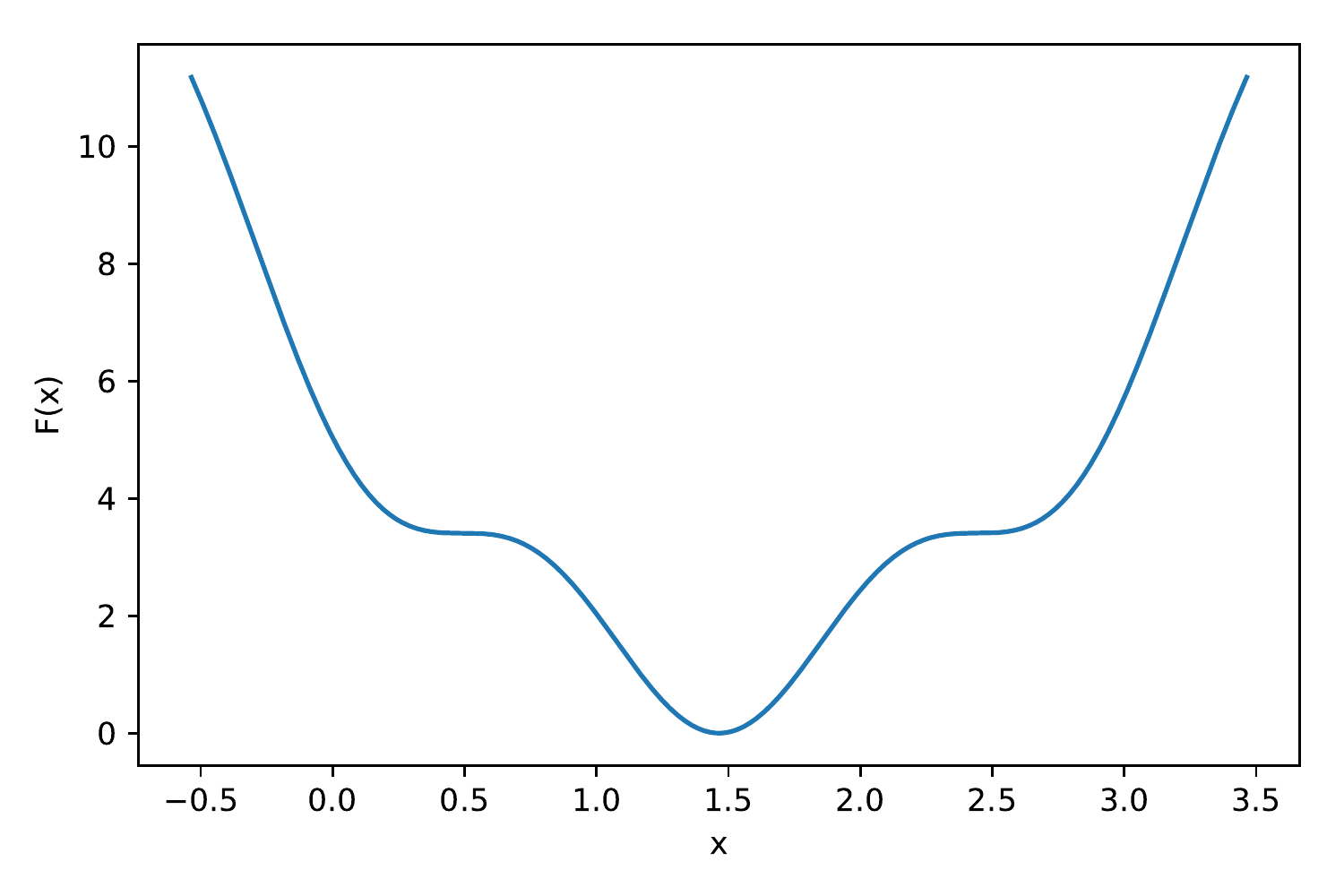}
\end{subfigure}
\begin{subfigure}{0.49\linewidth}
\includegraphics[width=\linewidth]{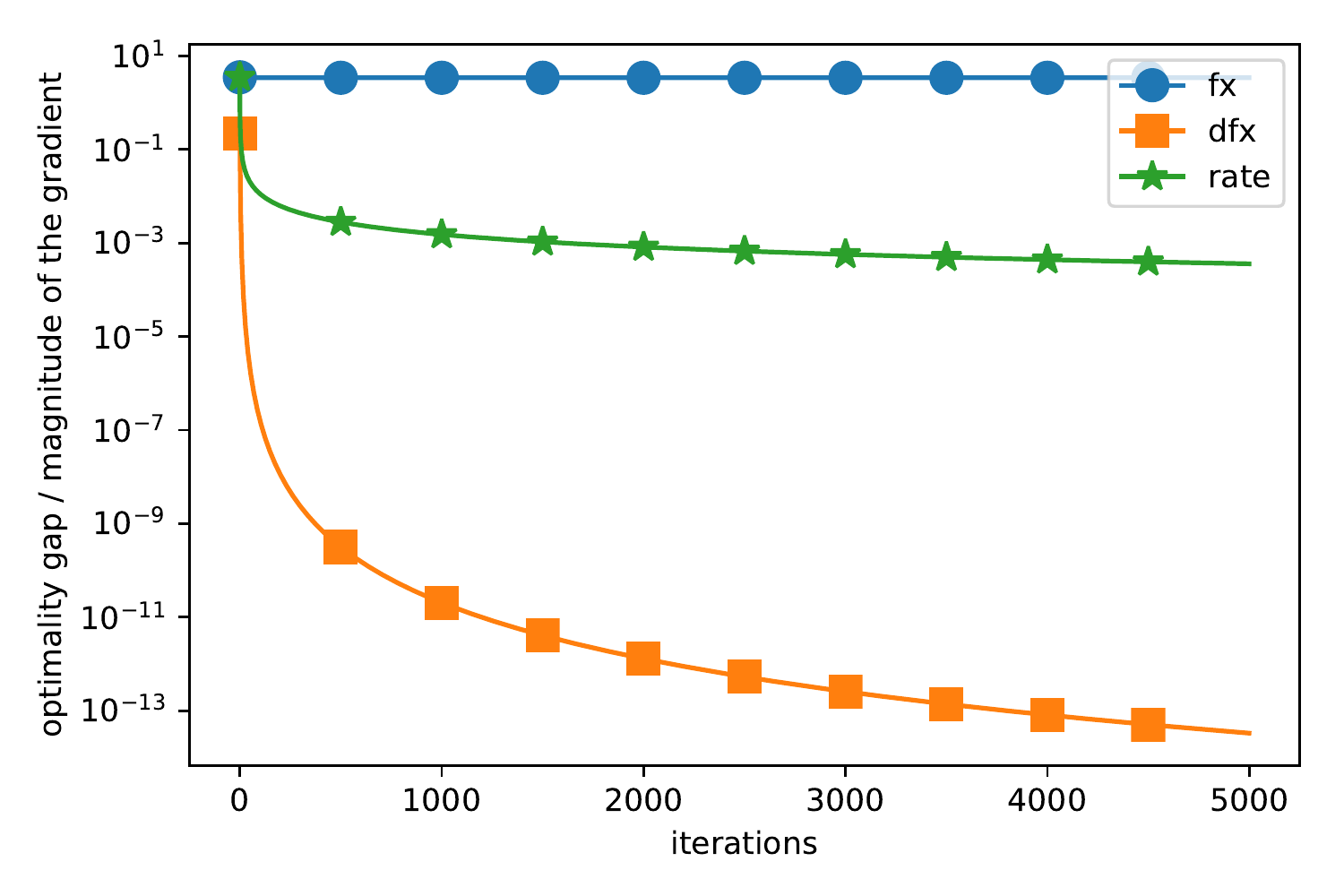}
\end{subfigure}
\caption{Plot of a one-dimensional function with a plateau being optimized (left) and the convergence of the magnitude of the gradient descent with its corresponding rate (right).}
\label{fig:1d_example}
\end{figure}

\bibliographystyle{natbib}
\bibliography{biblio}

\newpage
\appendix

\section{Some basic properties of WPL functions} \label{Appendix-WPL}

Here we establish some basic properties of smooth WPL functions.

\begin{proposition} Assume $f\in \cW_{PL}(\mu)$ for some $\mu>0$. Then
\begin{itemize}
\item[(i)] $\cX^* = \{\bx \in \R^n \;:\; \nabla f(\bx) = 0\}$
\item[(ii)] $\cX^* = \{\bx \in \R^n \;:\; \xi(\bx) = 0\}$ 
\item[(iii)] If $\nabla f(\bx)$ is continuous, then $\cX^*$ is convex.
\item[(iv)] Assume $\nabla f(\bx)$ is continuous. Then $f$ is non-decreasing on all rays emanating from $\bx^*$. That is, $\psi(t) \eqdef f(\bx^* + t(\bx-\bx^*))$ is non-decreasing on $t\geq 0$ for all $\bx\in \R^n$.
\end{itemize}
\end{proposition}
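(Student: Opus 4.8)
The plan is to dispatch (i) and (ii) directly from the definitions and then obtain (iii) from (iv). For (ii): $\xi(\bx)=0$ says $f(\bx)=f(\bx^*)=\min f$, i.e. $\bx\in\cX^*$, and the converse is just the definition of $\cX^*$ — no appeal to the WPL inequality is needed. For (i): $\cX^*\subseteq\{\nabla f=0\}$ is first-order optimality, and conversely, if $\nabla f(\bx)=0$ the right-hand side of \eqref{eq:weak_PL_def} vanishes, so $\xi(\bx)\le 0$; since $\xi\ge 0$ this forces $\xi(\bx)=0$, hence $\bx\in\cX^*$ by (ii).

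For (iv) I would fix $\bx$, set $\bv=\bx-\bx^*$ and $\psi(t)=f(\bx^*+t\bv)$; this is $C^1$ (gradient continuous), with $\psi(0)=\min f$ and $\psi\ge\psi(0)$. Arguing by contradiction, if $\psi$ fails to be non-decreasing then, because $\psi(0)$ is the global minimum, $\psi$ must attain a local maximum at some interior $\bar t>0$ with $\psi(\bar t)>\min f$. Writing $\bz:=\bx^*+\bar t\bv$, the radial directional derivative vanishes there, $\langle\nabla f(\bz),\bz-\bx^*\rangle=\bar t\,\psi'(\bar t)=0$, while $\xi(\bz)=\psi(\bar t)-\min f>0$, so \eqref{eq:weak_PL_def} forces $\nabla f(\bz)\neq 0$: the gradient at $\bz$ is nonzero but tangent to the sphere through $\bz$ centred at $\bx^*$. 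The hard part is turning this into a contradiction — pointwise WPL is perfectly consistent with a tangential gradient, so the argument must be global. My plan is to follow a descent path out of $\bz$ and use the PL-type estimate $\|\nabla f(\by)\|^2\ge\mu\,\xi(\by)^2/\|\by-\bx^*\|^2$ of Lemma~\ref{lem:nhd89hg30did} to play the guaranteed decrease of $\xi(\by)$ against the change of $\|\by-\bx^*\|$ along the path, thereby showing that $f$ cannot drop below $\psi(\bar t)$ just inside $\bz$ on the ray without violating \eqref{eq:weak_PL_def}; equivalently, this shows every sublevel set $\{f\le c\}$ is star-shaped about $\bx^*$.

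Granting (iv), I would deduce (iii) as follows. Running (iv) along the ray from $\bx^*$ through any $\by\in\cX^*$ produces a non-decreasing $\psi$ with $\psi(0)=\psi(1)=\min f$, hence $\psi\equiv\min f$ on $[0,1]$, so $[\bx^*,\by]\subseteq\cX^*$ and $\cX^*$ is star-shaped about $\bx^*$. For genuine convexity, take $\bx_1,\bx_2\in\cX^*$ (the colinear case is trivial) and put $T=\conv\{\bx^*,\bx_1,\bx_2\}$; by (iv) every point of $T$ has $f$-value at most that of the point where its ray from $\bx^*$ meets the edge $[\bx_1,\bx_2]$, so $\max_T f$ is attained at some $\bp\in[\bx_1,\bx_2]$. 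If $\bp$ lay in the relative interior of that edge with $f(\bp)>\min f$, then $\bp$ maximizes $f$ over the two-dimensional face $T$, giving $\nabla f(\bp)\perp(\bx_1-\bx_2)$ together with $\langle\nabla f(\bp),\bx_j-\bx^*\rangle\ge 0$; but \eqref{eq:weak_PL_def} makes $\nabla f(\bp)\neq 0$, and I expect the same sphere-tangency/descent analysis as in (iv), carried out in the plane of $T$, to rule this out, so $\max_T f=\min f$ and $[\bx_1,\bx_2]\subseteq\cX^*$.

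The main obstacle is the closing step of (iv): excluding a nonzero gradient that is purely tangential to the sphere centred at $\bx^*$ — pointwise this is invisible to WPL, so the argument has to track a global object (a descent flow) and exploit Lemma~\ref{lem:nhd89hg30did} quantitatively. A secondary nuisance is that \eqref{eq:weak_PL_def} is anchored at one specific $\bx^*$, so (iv) does not automatically extend to rays from an arbitrary minimizer (which would give convexity of $\cX^*$ immediately); that is why I route (iii) through the triangle/face-maximum argument rather than simply invoking star-shapedness about every point.
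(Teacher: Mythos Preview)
The paper states this proposition \emph{without proof}, so there is nothing to compare against. Your arguments for (i) and (ii) are correct and complete.

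The gap you flag in (iv), however, is not a missing technical step: claims (iii) and (iv) are \emph{false} as stated. For (iii), take the paper's own example $f(x_1,x_2)=x_1^2x_2^2$, which the introduction asserts lies in $\cW_{PL}(1)$ with $\bx^*=0$; its gradient is polynomial hence continuous, yet $\cX^*=\{(x_1,x_2):x_1x_2=0\}$ is the union of the coordinate axes and is not convex. Your triangle argument stalls at exactly this configuration: with $\bx_1=(1,0)$, $\bx_2=(0,1)$ the maximum of $f$ over $\conv\{0,\bx_1,\bx_2\}$ is attained at $\bp=(\tfrac12,\tfrac12)$, where $\nabla f(\bp)=(\tfrac14,\tfrac14)\perp(\bx_1-\bx_2)$ and $f(\bp)=\tfrac1{16}>0$ --- precisely the situation you set up, but no contradiction follows.

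For (iv), define in polar coordinates centred at $\bx^*=0$
\[
f(r,\theta)\;=\;r^2\bigl(2+\sin\!\bigl(4(\theta-\ln r)\bigr)\bigr),\qquad f(0)=0.
\]
Writing $h(\phi)=2+\sin(4\phi)$, one has $\partial_r f=r(2h-h')$ and $\tfrac1r\partial_\theta f=rh'$, so $\|\nabla f\|\le Cr$ and $\nabla f$ is continuous at the origin; moreover $f\ge r^2>0$ for $r>0$, so $\cX^*=\{0\}$. A direct computation gives
\[
\|\nabla f\|^2\,\|\bx-\bx^*\|^2 \;=\; r^4\bigl[(2h-h')^2+h'^2\bigr]\;=\;r^4\bigl[2(h'-h)^2+2h^2\bigr]\;\ge\;2r^4h^2\;=\;2\,\xi(\bx)^2,
\]
so $f\in\cW_{PL}(2)$. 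But $2h-h'=4+2\sin(4\phi)-4\cos(4\phi)$ has minimum $4-\sqrt{20}<0$, hence $\partial_r f<0$ on a nonempty open set and $f$ is not radially non-decreasing from $\bx^*$. Thus the ``closing step'' you could not supply for (iv) does not exist; the tangential-gradient scenario you isolated is not excluded by WPL, and the descent-flow idea cannot rescue the claim.
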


\begin{proposition} Assume that $f$ has continuous gradient. If there exists a constant $c>0$ such that 
\[\int_{0}^1 \|\nabla f(\bx^* + t(\bx-\bx^*))\|	\; dt \qq{\leq} c \|\nabla f(\bx)\|, \qquad \bx\in \R^n,\]
then $f\in \cW_{PL}(\tfrac{1}{c^2})$.
\end{proposition}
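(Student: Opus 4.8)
The plan is to pass from the integral hypothesis to the weak PL inequality \eqref{eq:weak_PL_def} by integrating the gradient along the straight segment joining $\bx^*$ to $\bx$. Concretely, for a fixed $\bx\in\R^n$ set $\gamma(t)\eqdef \bx^*+t(\bx-\bx^*)$ for $t\in[0,1]$. Since $f$ has a continuous gradient, $t\mapsto f(\gamma(t))$ is continuously differentiable, and the chain rule together with the fundamental theorem of calculus gives
\[
\xi(\bx) \qq{=} f(\bx)-f(\bx^*) \qq{=} \int_0^1 \langle \nabla f(\gamma(t)), \bx-\bx^*\rangle \, dt .
\]

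Next I would bound the right-hand side from above. Applying the Cauchy--Schwarz inequality inside the integral and then pulling the constant factor $\|\bx-\bx^*\|$ out,
\[
\xi(\bx) \qq{\leq} \int_0^1 \|\nabla f(\gamma(t))\|\cdot\|\bx-\bx^*\| \, dt \qq{=} \|\bx-\bx^*\| \int_0^1 \|\nabla f(\gamma(t))\| \, dt .
\]
Now invoke the hypothesis $\int_0^1 \|\nabla f(\bx^*+t(\bx-\bx^*))\|\,dt \leq c\,\|\nabla f(\bx)\|$ to conclude $\xi(\bx)\leq c\,\|\nabla f(\bx)\|\cdot\|\bx-\bx^*\|$, i.e.
\[
\tfrac{1}{c}\,\xi(\bx) \qq{\leq} \|\nabla f(\bx)\|\cdot\|\bx-\bx^*\|, \qquad \bx\in\R^n .
\]
Since $\sqrt{1/c^2}=1/c$, this is exactly \eqref{eq:weak_PL_def} with parameter $\mu=1/c^2$, so $f\in\cW_{PL}(\tfrac{1}{c^2})$.

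There is really no serious obstacle here: the only things to be a little careful about are that the line integral representation of $\xi(\bx)$ requires $\nabla f$ continuous (which is assumed) so that $f(\gamma(t))$ is $C^1$ on $[0,1]$, and that bounding $\int_0^1 \langle \nabla f(\gamma(t)),\bx-\bx^*\rangle\,dt$ by $\int_0^1 \|\nabla f(\gamma(t))\|\,\|\bx-\bx^*\|\,dt$ is legitimate even when the integrand changes sign, since $\int h \le \int |h|$. The statement is vacuous (or trivial) when $\bx=\bx^*$, so one may assume $\bx\neq\bx^*$; the displayed chain of inequalities then holds verbatim.
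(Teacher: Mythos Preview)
Your proof is correct and follows exactly the same route as the paper: express $\xi(\bx)$ via the fundamental theorem of calculus along the segment from $\bx^*$ to $\bx$, apply Cauchy--Schwarz inside the integral, and then invoke the hypothesis to obtain $\xi(\bx)\le c\,\|\nabla f(\bx)\|\,\|\bx-\bx^*\|$. Your write-up is slightly more detailed (spelling out the $C^1$ justification and the $\bx=\bx^*$ triviality), but the argument is identical.
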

\begin{proof}  Using the fundamental theorem of calculus, Cauchy-Schwartz inequality, and then  applying the assumption, we get \begin{eqnarray*} \xi(\bx) = f(\bx) - f(\bx^*) & = &  \int_{0}^1 \langle \nabla f(\bx^* + t(\bx-\bx^*)), \bx-\bx^* \rangle \; dt\\
&\leq & \int_{0}^1 \|  \nabla f(\bx^* + t(\bx-\bx^*))\| \cdot \| \bx-\bx^*\| \; dt \\
&\leq & c \| \nabla f(\bx)\| \|\bx-\bx^*\| .
\end{eqnarray*}
\end{proof}

Let us shed light on the above result. If $f$ is convex, then the directional derivative $\psi(t)\eqdef \langle \nabla f(\bx^* + t(\bx-\bx^*)), \bx-\bx^* \rangle$ is an increasing function of $t$, and hence can be bounded above on $[0,1]$ by $\psi(1) = \langle \nabla f(\bx), \bx-\bx^* \rangle \leq \|\nabla f(\bx)\| \|\bx-\bx^*\|$. It follows that $f\in \cW_{PL}(1)$, which we already know.

\begin{theorem} The following hold: 
\begin{enumerate}
\item If $\mu_1 \geq \mu_2$, then $\cW_{PL}(\mu_1)\subseteq \cW_{PL}(\mu_2)$.

\item If $f$ is convex, then $f\in \cW_{PL}(\mu)$ for all $\mu \leq 1$.
\item If $f\in \cW_{PL}(\mu)$, then $af+b \in \cW_{PL}(\mu)$ for all $a\geq 0$ and $b\in \R$. 
\item If $f\in \cC^1(L)$, then $\cS_{PL}(\mu)\subseteq \cW_{PL}(4\mu/L)$.
\item Let $f\in \cC^1(L)$, fix $\bx^*\in \cX^*$, and assume that there exists a constant $c>0$ such that $\|\bx-\bx^*\| \leq c \|\nabla f(\bx)\|$ for all $\bx\in \R^n$. Then $f\in \cW_{PL}(\tfrac{4}{L^2 c^2})$.
\item Assume that there exists a constant $c>0$ such that $\|\nabla f(\bx)\| \leq c$ for all $\bx \in \R^n$. If $f\in \cW_{PL}(\mu)$, then $\xi(\bx) \leq \frac{c}{\sqrt{\mu}}\|\bx - \bx^*\|$ for all $\bx \in \R^n$. That is, $f$ is Lipschitz on each ray emanating from $\bx^*$ with Lipschitz constant $\frac{c}{\sqrt{\mu}}$.
\item Assume $f\in \cW_{PL}(\mu)$ with $\mu>0$. If 
\[\frac{\langle \nabla f(\bx), \bx-\bx^* \rangle}{\|\nabla f(\bx)\|\|\bx-\bx^*\|} \qq{\geq} \frac{1}{\sqrt{\mu}},\]
then $f$ satisfies the restricted convexity property: 
\[f(\bx^*) \qq{\geq} f(\bx) + \langle \nabla f(\bx), \bx^*-\bx \rangle, \qquad \bx\in \R^n.\]
\end{enumerate}
\end{theorem}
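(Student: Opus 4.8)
The plan is to derive the restricted convexity inequality directly by chaining three facts: the weak PL inequality \eqref{eq:weak_PL_def}, the angle hypothesis, and the nonnegativity $\xi(\bx)\geq 0$. Fix $\bx\in\R^n$. First I would dispose of the degenerate cases in which the quotient in the hypothesis is undefined: if $\bx=\bx^*$, both sides of the claimed inequality equal $f(\bx^*)$; if $\nabla f(\bx)=0$, then (by part (i) of the first proposition of Appendix~\ref{Appendix-WPL}, or directly since $\sqrt\mu\,\xi(\bx)\leq\|\nabla f(\bx)\|\|\bx-\bx^*\|=0$) we have $\xi(\bx)=0$, and the inequality again holds with equality. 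So assume $\|\nabla f(\bx)\|\cdot\|\bx-\bx^*\|>0$.

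Next comes the core estimate. By the hypothesis,
\[
\langle \nabla f(\bx), \bx-\bx^*\rangle \qq{\geq} \frac{1}{\sqrt\mu}\,\|\nabla f(\bx)\|\cdot\|\bx-\bx^*\|.
\]
Now apply the weak PL inequality \eqref{eq:weak_PL_def}, which gives $\|\nabla f(\bx)\|\cdot\|\bx-\bx^*\|\geq \sqrt\mu\,\xi(\bx)$; substituting into the right-hand side above yields
\[
\langle \nabla f(\bx), \bx-\bx^*\rangle \qq{\geq} \frac{1}{\sqrt\mu}\cdot\sqrt\mu\,\xi(\bx) \qq{=} \xi(\bx) \qq{=} f(\bx)-f(\bx^*).
\]
Rearranging $f(\bx)-f(\bx^*)\leq \langle \nabla f(\bx),\bx-\bx^*\rangle$ into $f(\bx^*)\geq f(\bx)+\langle\nabla f(\bx),\bx^*-\bx\rangle$ completes the argument.

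I do not expect any real obstacle here — the statement is essentially the observation that the WPL inequality becomes an equality chain once the gradient and the displacement are ``well aligned'' in the precise quantitative sense the hypothesis specifies. The only points requiring a word of care are the degenerate cases handled above, and the implicit consistency remark that, by Cauchy--Schwarz, the quotient on the left of the hypothesis never exceeds $1$, so the condition is only meaningful when $\mu\geq 1$ (in particular for $\mu=1$, where it forces $\nabla f(\bx)$ to be a nonnegative multiple of $\bx-\bx^*$); this is worth stating but does not affect the proof.
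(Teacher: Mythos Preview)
Your proposal is correct and follows essentially the same approach as the paper: chain the weak PL inequality $\sqrt{\mu}\,\xi(\bx)\leq\|\nabla f(\bx)\|\|\bx-\bx^*\|$ with the angle hypothesis $\|\nabla f(\bx)\|\|\bx-\bx^*\|\leq\sqrt{\mu}\,\langle\nabla f(\bx),\bx-\bx^*\rangle$, divide by $\sqrt{\mu}$, and rearrange. The paper's proof is terser and omits the degenerate cases and the Cauchy--Schwarz consistency remark you include, but the substance is identical.
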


\begin{proof}

\begin{enumerate}
\item Obvious.
\item This was established in the introduction for $\mu=1$.  It only remains to apply 1) to conclude that 2) holds for all $\mu \leq 1$.  
\item Obvious. 
\item If $f\in \cS_{PL}(\mu)$, then
\[\frac{\|\nabla f(\bx)\|^2 \cdot \|\bx-\bx^*\|^2}{\xi^2(\bx)} \qq{\overset{\eqref{eq:s09h9hf3}}{\geq}} \frac{2 \mu \|\bx - \bx^*\|^2}{\xi(\bx)} \qq{\overset{\eqref{eq:L-smoothness}}{\geq}}  \frac{4 \mu}{L}.\]

\item We have $\xi(\bx) \leq \frac{L}{2}\|\bx - \bx^*\|^2$ using smoothness. Combining this with the assumption $\|\bx - \bx^*\| \leq c\|\nabla f(\bx)\|$ from the claim we will prove $f \in \cW_{PL}(\frac{4}{L^2 c^2})$ from the definition \eqref{eq:weak_PL_def} \[\frac{2}{Lc}\xi(\bx) \qq{\leq} \frac{1}{c}\|\bx - \bx^*\|^2 \qq{\leq}  \| \nabla f (\bx)\| \cdot \|\bx - \bx^* \|.\]
\item Directly using the definition of $f \in \cW_{PL}(\mu)$ with the assumption $\|\nabla f(\bx)\| \leq c$ we get \[\xi(\bx) \qq{\stackrel{\eqref{eq:weak_PL_def}}{\leq}} \frac{1}{\sqrt{\mu}} \|\nabla f(\bx)\| \|\bx - \bx^*\| \qq{\leq} \frac{c}{\sqrt{\mu}} \|\bx - \bx^*\|.\]
\item Combining the definition of $f \in \cW_{PL}(\mu)$ \eqref{eq:weak_PL_def} with the assumption from the claim we get that \[\sqrt{\mu}\xi(\bx) \qq{\stackrel{\eqref{eq:weak_PL_def}}{\leq}} \|\nabla f(\bx)\| \cdot \| \bx - \bx^*\| \qq{\leq} \sqrt{\mu} \langle \nabla f(\bx),  \bx - \bx^* \rangle.\]
Dividing both sides by $\sqrt{\mu}$ and adding $f(\bx^*)$ we get the restricted convexity.
\end{enumerate}
\end{proof}

\newpage
\section{Proofs}
\subsection{Proof of Lemma~\ref{lem:onestepbound}} \label{sec:proof_onestepbound}

\begin{proof}
From the definition of Algorithm~\ref{alg:general} we have that $\bx^+ = \bx + \bu_{[S]}^*$, where \begin{equation} \label{eq:prf_onestepbound_ustar}
\bu^* \qq{\eqdef} \argmin_{\bu \in \R^n} \left\{ \langle \nabla_{[S]} f(\bx), \bu \rangle + \frac{1}{2}\bu^\top \bM_{[S]} \bu + \sum_{i \in S} \left[ g_i(x_i + u_i) - g_i(x_i) \right] \right\}
\end{equation}

It follows that \begin{eqnarray*}
\xi(\bx^+) &=& \xi(\bx + \bu^*) \\
&\stackrel{\eqref{eq:optimality_gap}}{=}& F(\bx + \bu^*) - F(\bx^*) \\
&\stackrel{\eqref{eq:prox_problem}}{=}& f(\bx + \bu^*_{[S]}) + g(\bx + \bu^*_{[S]}) - F(\bx^*)\\
&\stackrel{\eqref{eq:Msmoothness}}{\leq} & f(\bx) + \langle \nabla f(\bx), \bu^*_{[S]} \rangle + \frac{1}{2} (\bu^*_{[S]})^\top \bM \bu^*_{[S]} + g(\bx + \bu_{[S]}^*) - F(\bx^*) \\
&\stackrel{\eqref{eq:prox_problem}}{=} & F(\bx) + \langle \nabla f(\bx), \bu^*_{[S]} \rangle + \frac{1}{2} (\bu^*_{[S]})^\top \bM \bu^*_{[S]} + g(\bx + \bu_{[S]}^*) - g(\bx) - F(\bx^*) \\
&\stackrel{\eqref{eq:optimality_gap}}{=} & \xi(\bx) + \langle \nabla f(\bx), \bu^*_{[S]} \rangle + \frac{1}{2} (\bu^*_{[S]})^\top \bM \bu^*_{[S]} + g(\bx + \bu_{[S]}^*) - g(\bx) \\
&\stackrel{\eqref{eq:separability}}{=} & \xi(\bx) + \langle \nabla f(\bx), \bu^*_{[S]} \rangle + \frac{1}{2} (\bu^*_{[S]})^\top \bM \bu^*_{[S]} + \sum_{i \in S} \left[ g_i(x_i + u_{i}^*) - g_i(x_i) \right] \\
&\stackrel{\eqref{eq:prf_onestepbound_ustar}}{=} & \xi(\bx) + \min_{\bu \in \R^n} \left\{ \langle \nabla_{[S]} f(\bx), \bu \rangle + \frac{1}{2}\bu^\top \bM_{[S]} \bu + \sum_{i \in S} \left[ g_i(x_i + u_i) - g_i(x_i) \right] \right\} \\
&\stackrel{\eqref{eq:proportion_function}}{\leq}& \xi(\bx) + \theta(S,\bx) \cdot \min_{\by \in \R^n} \left\{ \langle \nabla f(\bx), \by \rangle + \frac{L}{2}\|\by\|^2 + g(\bx + \by) - g(\bx) \right\} \\
&\stackrel{\eqref{eq:GPL_function}}{=}& [1 - \theta(S,\bx) \cdot \mu(\bx)] \cdot \xi(\bx)
\end{eqnarray*}
Note that the inequality in the one-to-last line might happen in the case, when $\bx \notin \mathcal{X}$ in the definition of the proportion function in \eqref{eq:proportion_function}.
Chaining up the resulting expressions from $\bx^K$ all the way back to $\bx^0$ proves the $K$-step bound.
\end{proof}

\subsection{Proof of Theorem~\ref{thm:GPL_convex_strongly_convexXXX}} \label{sec:proof_GPL_strongly_convex}
\begin{proof}
Using the result of Lemma~\ref{lem:technical_GPL}, we have the bound \eqref{eq:technical_lemma_GPL}. To get the result, we will use the bound $(a + b)^2 \geq 4ab$, which holds for any $a,b > 0$. Specifically, we will use it for $a = \xi(\bx)$ and $b = \tfrac{\lambda_F}{2}\|\bx - \bx^*\|^2$ in the expression in \eqref{eq:technical_lemma_GPL} to get 
\begin{align*}
- L \cdot \min_{\by \in \R^n} &\left\{ \langle \nabla f(\bx), \by \rangle + \frac{L}{2}\|\by\|^2 + g(\bx + \by) - g(\bx) \right\} \\ &\geq \xi(\bx) \cdot \min \left\{ \frac{L}{2}, \frac{L\lambda_F}{\lambda_F - \lambda_f + L} \right\},
\end{align*}
which is the claimed result.
\end{proof}

\subsection{Proof of Theorem~\ref{thm:SPLg}} \label{sec:proof_SPLg_general}
\begin{proof}
The one-step bound \eqref{eq:onestepbound} combined with the definition of the class $\cS_{PL}^g$ \eqref{eq:weak_PL_def} gives
\begin{align*}
\xi(\bx^{k+1}) \qq{\stackrel{\eqref{eq:onestepbound}}{\leq}} \left( 1 - \theta(S_k, \bx^k) \cdot \mu(\bx^k) \right) \cdot \xi(\bx^k) \qq{\stackrel{\eqref{eq:weak_PL_def}}{\leq}}  (1 - \theta(S_k, \bx^k) \cdot \mu) \cdot \xi(\bx^k).
\end{align*}
Now, by taking full expectation over the whole sampling procedure on both sides and using the definition of $\mu_k$ \eqref{eq:SPLg_muk} we get
\begin{eqnarray*}
\E [\xi(\bx^{k+1})] & \leq & \E\left[\left (1 - \theta(S_k, \bx^k) \mu \right) \cdot \xi(\bx^k)\right] \\
& = & \E[\xi(\bx^k)] - \mu \E[\theta(S_k, \bx^k) \cdot \xi(\bx^k)] \\
 & \stackrel{\eqref{eq:SPLg_muk}}{=} &  \left( 1 - \mu_k \right) \cdot \E [\xi(\bx^k)].
\end{eqnarray*}
To establish the convergence rate \eqref{eq:SPLg_convergence_rate}, we use the estimate $1 - s \leq e^{-s}$ to get $\E[\xi(\bx^{k+1})] \leq e^{-\mu_k} \E[\xi(\bx^{k})],$
which we can simply chain together repeatedly to get \[\E[\xi(\bx^{k+1})] \qq{\leq} e^{- \sum_{k=0}^{K-1} \mu_k} \xi(\bx^0).\]
Setting the right-hand side less or equal to $\epsilon$ and rearranging we finally get \eqref{eq:SPLg_convergence_rate}.
\end{proof}

\subsection{Proof of Theorem~\ref{thm:GPL_convex_weakly_convexXXX}} \label{sec:proof_GPL_convex}

\begin{proof}
Using the result of Lemma~\ref{lem:technical_GPL}, we have the bound \eqref{eq:technical_lemma_GPL}. Plugging $\lambda_F = \lambda_f = 0$ into the expression in \eqref{eq:technical_lemma_GPL} we get 
\begin{align*}
- L \cdot \min_{\by \in \R^n} &\left\{ \langle \nabla f(\bx), \by \rangle + \frac{L}{2}\|\by\|^2 + g(\bx + \by) - g(\bx) \right\} \\ &\geq \xi^2(\bx) \cdot \min \left\{ \frac{L}{2 \xi(\bx)}, \frac{1}{2 \|\bx - \bx^*\|^2} \right\},
\end{align*}
which is the first part of the claimed result. As for the second part, we can directly bound $\xi(\xi) \leq \xi(\bx^0)$, as these are the only pairs of $\bx, \bx^0$ we need to consider according to Definition~\ref{def:weaklyPL-general}. Similarly, as the function value is bounded, the quantities $\|\bx - \bx^*\|$ can be upper bounded by the largest distance in the level set of $f(\bx^0)$, which is given by $R$ in \eqref{eq:weakly_PL_R_convergence}. Combining these arguments, we get that \[\mu(\bx) \qq{\geq} \xi(\bx) \cdot \min \left\{ \frac{L}{2 \xi(\bx)}, \frac{1}{2 \|\bx - \bx^*\|^2} \right\} \qq{\geq} \xi(\bx) \cdot \min \left\{ \frac{L}{2 \xi(\bx^0)}, \frac{1}{2 R^2} \right\},\]
which is the claimed result.
\end{proof}

\subsection{Proof of Theorem~\ref{thm:WPLg}} \label{sec:proof_WPLg_general}

\begin{proof}
Combining the one-step bound \eqref{eq:onestepbound} with the definition of the class $\cW_{PL}^{g}$ \eqref{eq:weakly_PL_general} we get that
\begin{align*}
\xi(\bx^{k+1}) &\qq{\stackrel{\eqref{eq:onestepbound}}{\leq}}  [1 - \theta(S_k, \bx^k) \cdot \mu(\bx^k)] \cdot \xi(\bx^k)  \\
&\qq{\stackrel{\eqref{eq:weakly_PL_general}}{\leq}} [1 - \theta(S_k, \bx^k) \cdot \rho(\bx^0) \cdot \xi(\bx^k)] \cdot \xi(\bx^k)
\end{align*}
 Now, taking full expectation over the whole sampling process on both sides and using the definition of $\mu_k$ \eqref{eq:WPLg_muk} we get
\begin{eqnarray*}
\E [\xi(\bx^{k+1})] &\leq & \E\left[\left (1 - \theta(S_k, \bx^k) \cdot \rho(\bx^0) \cdot \xi(\bx^k)\right) \cdot \xi(\bx^k)\right] \\
& = & \E[\xi(\bx^k)] - \rho(\bx^0) \E[\theta(S_k, \bx^k) \cdot (\xi(\bx^k))^2] \\
&\stackrel{\eqref{eq:WPLg_muk}}{=} & \left( 1 - \mu_k \E [\xi(\bx^{k})]\right) \cdot \E [\xi(\bx^k)].
\end{eqnarray*}
Observe, that $\{\mu_k\}_{k=0}^{K}$ and $\{\E[\xi(\bx^k)]\}_{k=0}^K$ are both positive scalars, therefore we can use Lemma~\ref{lem:onestepbound_xi} to get the bound \[\E[\xi(\bx^K)] \qq{\leq} \frac{\xi(\bx^0)}{1 + \xi(\bx^0) \sum_{k=0}^{K-1} \mu_k}.\] Putting the right-hand side less than $\epsilon$ and rearranging leads to the claimed result \eqref{eq:WPLg_convergence_rate}.
\end{proof}

\subsection{Proof of Theorem~\ref{thm:non-convex_case}} \label{sec:proof_non-convex_case}

\begin{proof}
If part $(i)$ from the claim holds, we are done. Now assume on the contrary, that $(i)$ does not hold, i.e., \begin{equation} \label{eq:thm:non-convex_prf1}
- L \cdot  \min_{\by \in \R^n} \left\{ \langle \nabla f(\bx^k), \by \rangle + \frac{L}{2}\|\by\|^2 + g(\bx^k + \by) - g(\bx^k) \right\} \qq{\geq} \epsilon
\end{equation} for all $k \in \{0, \dots, K-1\}$. It follows, that 
\begin{align*}
\mu(\bx^k) \qq{\stackrel{\eqref{eq:GPL_function_smooth}}{=}} \frac{-L \cdot \min_{\by \in \R^n} \left\{ \langle \nabla f(\bx^k), \by \rangle + \frac{L}{2}\|\by\|^2 + g(\bx^k + \by) - g(\bx^k) \right\}}{\xi(\bx^k)} \qq{\stackrel{\eqref{eq:thm:non-convex_prf1}}{\geq}} \frac{\epsilon}{\xi(\bx^k)}.
\end{align*}
for all $k \in \{0, \dots, K-1\}$.
Using the result from Lemma~\ref{lem:onestepbound}, we have that $\xi(\bx^k) \leq \xi(\bx^0)$ for all $k$, which we can use to further bound \begin{equation} \label{eq:thm:non-convex_prf2}
\mu(\bx^k) \qq{\geq} \frac{\epsilon}{\xi(\bx^0)}.
\end{equation}
Using \eqref{eq:onestepbound} combined with the above result \eqref{eq:thm:non-convex_prf2} we get
\begin{align*}
\xi(\bx^K) &\qq{\stackrel{\eqref{eq:onestepbound}}{\leq}} \left[1 - \mu(\bx^{K-1}) \cdot \theta(S_{K-1}, \bx^{K-1})\right] \xi(\bx^{K-1})  \\
&\qq{\stackrel{\eqref{eq:thm:non-convex_prf2}}{\leq}} \left[ 1 - \frac{\epsilon \theta(S_{K-1}, \bx^{K-1})}{\xi(\bx^0)} \right] \xi(\bx^{K-1})
\end{align*}
Taking the expectation over the whole sampling procedure on both sides and using the definition of $\mu_k$ in \eqref{eq:non-convex_muk} we get \begin{eqnarray}
\E[\xi(\bx^K)] & \leq & \E \left[ \left( 1- \frac{\epsilon \theta(S_{K-1}, \bx^{K-1})}{\xi(\bx^0)} \right) \xi(\bx^{K-1})\right] \nonumber \\
& = & \E[\xi(\bx^{K-1})] - \frac{\epsilon}{\xi(\bx^{0})} \E[\theta(S_{K-1},\bx^{K-1}) \cdot \xi(\bx^{K-1})] \nonumber \\
 & \stackrel{\eqref{eq:non-convex_muk}}{=} & \left(1 - \frac{\epsilon \mu_{K-1}}{\xi(\bx^0)} \right) \E[\xi(\bx^{K-1})] \label{eq:thm:non-convex_prf3}
\end{eqnarray}
 Combining the above inequality \eqref{eq:thm:non-convex_prf3} with $(1 - z) \leq \exp(-z)$ repeatedly, we get 
\begin{eqnarray*}
\E[\xi(\bx^K)] & \stackrel{\eqref{eq:thm:non-convex_prf3}}{\leq} & \left(1 - \frac{\epsilon \mu_{K-1}}{\xi(\bx^0)}\right) \E[\xi(\bx^{K-1})] \\
& \leq & \exp \left(- \frac{\epsilon \mu_{K-1}}{\xi(\bx^0)} \right) \E[\xi(\bx^{K-1})] \\
& \stackrel{\eqref{eq:thm:non-convex_prf3}}{\leq} & \dots \\ 
&\stackrel{\eqref{eq:thm:non-convex_prf3}}{\leq} & \exp \left(- \frac{\epsilon \sum_{k=0}^{K-1}\mu_{k}}{\xi(\bx^0)} \right) \xi(\bx^0) \\
&\stackrel{\eqref{eq:convergence_non-convex_condition}}{\leq} &  \epsilon,
\end{eqnarray*}
where the last line follows from comparing the logarithms of both sides. This proves $(ii)$.
\end{proof}

\section{Technical Lemmas}

\subsection{Lemma ~\ref{lem:technical_GPL}}
\begin{lemma} \label{lem:technical_GPL}
Let $f$ be $\lambda_f$-strongly convex \eqref{eq:strongly_convex_smooth} with $\lambda_f \geq 0$ and $F$ be $\lambda_F$-strongly convex \eqref{eq:strongly_convex} with $\lambda_F \geq 0$. Then 
\begin{align}
- \min_{\by \in \R^n} \bigg\{ \langle \nabla f(\bx), &\by \rangle + \frac{L}{2}\|\by\|^2 + g(\bx + \by) - g(\bx) \bigg\} \nonumber \\
&\geq \min \left\{ \frac{1}{2}\xi(\bx),  \frac{\left(\xi(\bx) + \frac{\lambda_F}{2}\|\bx - \bx^*\|^2\right)^2}{2(\lambda_F - \lambda_f + L)\|\bx - \bx^*\|^2} \right\}. \label{eq:technical_lemma_GPL}
\end{align}
\end{lemma}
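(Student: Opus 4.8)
The plan is to lower-bound the minimization on the left-hand side by splitting the infimum over $\by$ into two regimes — a ``short-step'' regime and a ``long-step'' regime — and optimizing a convenient scalar surrogate in each. First I would use $\bM$-smoothness in the form $\bM = L\bI$ together with the definition of $\xi$ to pass to a cleaner object. The key identity is that for any fixed direction $\by$,
\begin{equation*}
F(\bx + \by) - F(\bx) \qq{\leq} \langle \nabla f(\bx), \by \rangle + \tfrac{L}{2}\|\by\|^2 + g(\bx+\by) - g(\bx),
\end{equation*}
so the negated minimum on the left of \eqref{eq:technical_lemma_GPL} is at least $-\min_{\by}\{F(\bx+\by)-F(\bx)\} = \xi(\bx) - \min_{\by} (F(\bx+\by) - F(\bx^*))$. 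Since $F$ has minimizer $\bx^*$, the natural candidates for $\by$ are scalings $\by = t(\bx^* - \bx)$ for $t \in [0,1]$; plugging these in and using $\lambda_F$-strong convexity of $F$ along the segment gives $F(\bx + t(\bx^*-\bx)) - F(\bx) \le -t\,\xi(\bx) - \tfrac{\lambda_F t(1-t)}{2}\|\bx-\bx^*\|^2$, while the $\tfrac{L}{2}\|\by\|^2$ term in the surrogate contributes (after correcting for the $\lambda_f$ already accounted for by smoothness versus strong convexity of $f$) a penalty of order $\tfrac{(L - \lambda_f + \lambda_F)t^2}{2}\|\bx - \bx^*\|^2$. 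The point of introducing $\lambda_f$ is exactly this bookkeeping: $f$ being simultaneously $L$-smooth and $\lambda_f$-strongly convex lets one shave the effective quadratic coefficient from $L$ down to $L - \lambda_f + \lambda_F$ when comparing the surrogate against the true decrease of $F$.

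Next I would carry out the scalar optimization. Writing $a = \xi(\bx) + \tfrac{\lambda_F}{2}\|\bx-\bx^*\|^2$ and $b = (L - \lambda_f + \lambda_F)\|\bx-\bx^*\|^2$, the bound from the previous paragraph says the left-hand side of \eqref{eq:technical_lemma_GPL} is at least $\max_{t \in [0,1]} \big( t\,a - \tfrac{t^2}{2} b \big)$. The unconstrained maximizer is $t^\star = a/b$. If $t^\star \le 1$, the maximum value is $a^2/(2b)$, which is precisely the second term in the min in \eqref{eq:technical_lemma_GPL}. If $t^\star > 1$, we instead take $t = 1$ and get a value of at least $a - \tfrac{b}{2}$; in this case $a > b$, so $a - \tfrac{b}{2} > \tfrac{a}{2} \ge \tfrac{1}{2}\xi(\bx)$ (using $a \ge \xi(\bx)$), which dominates the first term in the min. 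In either case the left-hand side is bounded below by $\min\{\tfrac12\xi(\bx),\ a^2/(2b)\}$, which is exactly \eqref{eq:technical_lemma_GPL}.

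I expect the main obstacle to be the bookkeeping in the first step: carefully justifying that the surrogate $\langle \nabla f(\bx),\by\rangle + \tfrac{L}{2}\|\by\|^2 + g(\bx+\by) - g(\bx)$, evaluated at $\by = t(\bx^*-\bx)$, can be upper bounded by $-t\,\xi(\bx) - \tfrac{\lambda_F t(1-t)}{2}\|\bx-\bx^*\|^2 + \tfrac{(L-\lambda_f)t^2}{2}\|\bx-\bx^*\|^2$. This requires combining three facts — $\lambda_F$-strong convexity of $F = f + g$ (equation \eqref{eq:strongly_convex}) applied to the pair $\bx, \bx^*$ with weight $t$; the gradient inequality \eqref{eq:strongly_convex_smooth} for $\lambda_f$-strongly convex $f$ to relate $\langle \nabla f(\bx), t(\bx^*-\bx)\rangle$ to $f(\bx^*) - f(\bx)$ minus a $\tfrac{\lambda_f t^2}{2}\|\bx-\bx^*\|^2$-type term; and the definition of $g(\bx + t(\bx^*-\bx))$ via convexity of $g$ (the $\lambda_F = 0$ part). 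Once these are assembled into the coefficients $a$ and $b$ above, the remaining scalar calculus is routine, and one should double-check that the edge cases $\|\bx - \bx^*\| = 0$ (excluded since $\bx \notin \cX^*$) and $\lambda_F = 0$ (so $a = \xi(\bx)$, recovering the weakly-convex statement of Theorem~\ref{thm:GPL_convex_weakly_convexXXX}) behave as expected.
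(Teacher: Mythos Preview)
Your overall strategy — plug $\by = t(\bx^{*}-\bx)$ into the surrogate, reduce to the scalar problem $\max_{t\in[0,1]}\bigl(ta - \tfrac{t^{2}}{2}b\bigr)$ with $a=\xi(\bx)+\tfrac{\lambda_F}{2}\|\bx-\bx^{*}\|^{2}$ and $b=(\lambda_F-\lambda_f+L)\|\bx-\bx^{*}\|^{2}$, and split on whether $a/b\le 1$ — is exactly the paper's proof. The scalar computation and the case split are correct.

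There is, however, a genuine slip in your first paragraph. From $L$-smoothness you have $F(\bx+\by)-F(\bx)\le \text{surrogate}(\by)$ pointwise, which yields $-\min_{\by}\text{surrogate}(\by)\le -\min_{\by}\{F(\bx+\by)-F(\bx)\}=\xi(\bx)$, an \emph{upper} bound, not the lower bound you need. Smoothness is never used in this lemma. The correct one-line reduction (and the one the paper uses) is the opposite inequality coming from $\lambda_f$-strong convexity of $f$: from \eqref{eq:strongly_convex_smooth} one gets $f(\bx)+\langle\nabla f(\bx),\by\rangle \le f(\bx+\by)-\tfrac{\lambda_f}{2}\|\by\|^{2}$, hence
\[
\text{surrogate}(\by)\;\le\;F(\bx+\by)-F(\bx)+\tfrac{L-\lambda_f}{2}\|\by\|^{2},
\]
which is a pointwise \emph{upper} bound on the surrogate and thus gives the desired lower bound on $-\min_{\by}\text{surrogate}$. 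Now take $\by=t(\bx^{*}-\bx)$ and apply $\lambda_F$-strong convexity of $F$ (inequality \eqref{eq:strongly_convex}) directly to $F(\bx+t(\bx^{*}-\bx))=F(t\bx^{*}+(1-t)\bx)$; this already delivers the $-t\xi(\bx)-\tfrac{\lambda_F t(1-t)}{2}\|\bx-\bx^{*}\|^{2}$ term in one shot, so your separate invocation of convexity of $g$ is unnecessary. With that correction, your last paragraph is precisely the bookkeeping that works, and the rest of your argument goes through unchanged.
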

\begin{proof}
Let
\begin{equation} \label{eq:prf_GBP_generalized_convex_beta}
\beta \qq{=} \min\left\{1, \frac{\xi(\bx) + \frac{\lambda_F}{2}\|\bx - \bx^*\|^2}{(\lambda_F - \lambda_f + L)\|\bx - \bx^*\|^2}\right\}.
\end{equation}
Observe, that $\beta = 1$ implies, that \[(\lambda_F - \lambda_f + L)\|\bx - \bx^*\|^2 \qq{\leq} \xi(\bx) + \frac{\lambda_F}{2}\|\bx - \bx^*\|^2,\]
from which it follows that \begin{equation} \label{eq:prf_GBP_generalized_convex_beta_1}
\frac{\lambda_f - L}{2}\|\bx-\bx^*\|^2 \qq{\geq} -\frac{1}{2}\xi(\bx) +  \frac{\lambda_F}{4}\|\bx - \bx^*\|^2 \qq{>} -\frac{1}{2}\xi(\bx)
\end{equation} Now, it follows that 
\begin{eqnarray*}
- \min_{\by \in \R^n} & & \left\{ \langle \nabla f(\bx), \by \rangle + \frac{L}{2}\|\by\|^2 + g(\bx + \by) - g(\bx) \right\} \nonumber \\ 
&\stackrel{\eqref{eq:prox_problem}}{=}& F(\bx) - \min_{\by \in \R^n}\left\{ f(\bx) + \langle \nabla f(\bx), \by \rangle + \frac{L}{2}\|\by\|^2 + g(\bx + \by) \right\} \nonumber \\
&\stackrel{\eqref{eq:strongly_convex_smooth}}{\geq}& F(\bx) - \min_{\by \in \R^n}\left\{ f(\bx + \by) - \frac{\lambda_f}{2}\|\by\|^2 + \frac{L}{2}\|\by\|^2 + g(\bx + \by) \right\} \nonumber \\ 
&\stackrel{\eqref{eq:prox_problem}}{=}& F(\bx) - \min_{\by \in \R^n} \left\{ F(\bx + \by) + \frac{L - \lambda_f}{2}\|\by\|^2 \right\} \qquad \left(\mbox{let } \by = \beta(\bx^* - \bx)\right) \nonumber \\
&\geq & F(\bx) - F(\beta\bx^* + (1-\beta)\bx) - \frac{\beta^2(L - \lambda_f)}{2}\|\bx - \bx^*\|^2 \nonumber \\ 
&\stackrel{\eqref{eq:strongly_convex}}{\geq}& F(\bx) - \beta F(\bx^*) - (1-\beta) F(\bx) + \left(\frac{\lambda_F \beta(1-\beta)}{2} - \frac{\beta^2(L - \lambda_f)}{2} \right)\|\bx - \bx^*\|^2 \nonumber \\ 
&\stackrel{\eqref{eq:optimality_gap}}{=}& \beta \left( \xi(\bx) + \frac{\lambda_F}{2}\|\bx - \bx^*\|^2\right) - \beta^2 \frac{  (\lambda_F - \lambda_f + L)}{2}\|\bx - \bx^*\|^2 \nonumber \\
&\stackrel{\eqref{eq:prf_GBP_generalized_convex_beta}}{=}& \min \left\{\xi(\bx) + \frac{\lambda_f - L}{2}\|\bx - \bx^*\|^2, \frac{\left( \xi(\bx) + \frac{\lambda_F}{2}\|\bx - \bx^*\|^2\right)^2}{2(\lambda_F - \lambda_f + L)\|\bx - \bx^*\|^2} \right\} \nonumber \\
&\stackrel{\eqref{eq:prf_GBP_generalized_convex_beta_1}}{>}& \min \left\{ \frac{1}{2}\xi(\bx),  \frac{\left(\xi(\bx) + \frac{\lambda_F}{2}\|\bx - \bx^*\|^2\right)^2}{2(\lambda_F - \lambda_f + L)\|\bx - \bx^*\|^2} \right\} \nonumber. 
\end{eqnarray*}
\end{proof}
\subsection{Lemma ~\ref{lem:onestep_expectation}}

\begin{lemma} \label{lem:onestep_expectation}

Let $f$ be a given function and let $X, Y$ be random variables such that $\Prob(f(X) \geq 0) = 1$ and $\Prob(\E[Y ~|~ X] \geq 0) = 1.$ Additionally, let $c > 0 $ be a scalar such that 
\begin{equation} \label{eq:onestep_expectation_c}
c \qq{\leq} \E[Y ~|~ X]
\end{equation}
  Then it holds that \begin{equation} \label{eq:onestep_expectation}
\E[f(X)Y] \qq{\geq} c \E[f(X)].
\end{equation}
\end{lemma}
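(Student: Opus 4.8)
The plan is to use the tower property of conditional expectation together with the key observation that $f(X)$ is a function of $X$, so it can be pulled out of the inner conditional expectation $\E[\,\cdot\mid X]$. First I would write
\[
\E[f(X)Y] \qq{=} \E\big[\E[f(X)Y \mid X]\big] \qq{=} \E\big[f(X)\,\E[Y\mid X]\big],
\]
where the second equality uses that $f(X)$ is $\sigma(X)$-measurable. Next, since $\Prob(f(X)\geq 0)=1$, multiplying the almost-sure inequality $\E[Y\mid X]\geq c$ from \eqref{eq:onestep_expectation_c} by the nonnegative quantity $f(X)$ preserves the inequality almost surely:
\[
f(X)\,\E[Y\mid X] \qq{\geq} c\, f(X) \qquad \text{a.s.}
\]
Taking expectations of both sides (and using monotonicity of expectation) gives $\E[f(X)\E[Y\mid X]] \geq c\,\E[f(X)]$, which combined with the identity above yields \eqref{eq:onestep_expectation}.

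The only subtlety — and the single step that deserves care rather than being truly an obstacle — is the integrability needed to justify pulling $f(X)$ out of the conditional expectation and to ensure the expectations involved are well-defined; in the intended applications $f=\xi$ (or $\xi^2$) and $Y=\theta_k\in[0,1]$ is bounded, so all quantities are nonnegative and the manipulations are valid with values in $[0,+\infty]$, making the argument rigorous without further hypotheses. I do not expect any genuinely hard part here; the lemma is essentially a one-line consequence of the tower property plus positivity of $f(X)$.
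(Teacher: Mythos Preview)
Your proposal is correct and follows essentially the same approach as the paper: apply the tower property to write $\E[f(X)Y]=\E[f(X)\,\E[Y\mid X]]$, then use nonnegativity of $f(X)$ together with $\E[Y\mid X]\geq c$ to conclude. Your added remark on integrability is a reasonable extra care step that the paper omits.
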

\begin{proof}
Using the tower property $\E[X] = \E[\E[X | Y]]$ combined with assumption \eqref{eq:onestep_expectation_c} we get
\[\E[f(X)Y] \qq{=} \E[\E[f(X)Y ~|~ X]] \qq{=} \E[f(X)\E[Y ~|~ X]] \qq{\stackrel{\eqref{eq:onestep_expectation_c}}{\geq}} \E[f(X) c] \qq{=} c\E[f(X)],\]
which concludes the proof.
\end{proof}

\newpage
\section{Notation Glossary}

\begin{table}[h]
\centering
\begin{tabular}{|c|c|c}
\hline
Notation & Description \\
\hline
$\R$ & the set of real numbers \\
$\R_+$ & the set of positive real numbers \\
$\extR$ & the set $\R \cup \{+\infty\}$ \\
\hline
$\bx$ & a vector \\
$x_i$ & the $i$-th entry of the vector $\bx$ \\
\hline
$\bX$ & a matrix \\
$\bX_{i:}$ & the $i$-th row of the matrix $\bX$ \\
$\bX_{:j}$ & the $j$-th column of the matrix $\bX$ \\
$X_{ij}$ & the entry at the $i$-th row and $j$-th column of the matrix $\bX$ \\
\hline
$[n]$ & a shorthand for the set $\{1, \dots, n\}$, usually containing the coordinates of the space \\
\hline
$\bx_{S}$ & a $|S|$-dimensional vector containing entries of $\bx$ with indices in $S$ \\
$\bx_{[S]}$ & the vector $\bx$ with the entries with indices outside $S$ zeroed out \\
\hline
$\bX_{S}$ & a $|S| \times |S|$ submatrix of $\bX$ containing only rows and columns with indices in $S$ \\
$\bX_{[S]}$ & the matrix $\bX$ with all entries on columns or rows outside of $S$ zeroed out \\
$\bX_{[S]}^{-1}$ & the $n \times n$ matrix containing $(\bX_S)^{-1}$ at the rows and columns indicated by $S$. \\
\hline
$f$ & a smooth function  from $\R^n$ to $\R$ (see Def.~\ref{ass:Msmoothness})\\
$g$ & a separable \eqref{ass:separability} and possibly non-smooth function from $\R^n$ to $\R$ \\
$F$ & the objective function from $\R^n$ to $\R$, defined as $f + g$ \eqref{eq:prox_problem} \\
\hline
$\nabla_S f(\bx)$ & a shorthand for $(\nabla f (\bx))_S$ \\
$\nabla_{[S]} f(\bx)$ & a shorthand for $(\nabla f (\bx))_{[S]}$ \\
\hline
$\xi(\bx)$ & the optimality gap defined in $\eqref{eq:optimality_gap}$ \\
$\lambda(\bx)$ & the auxilary function defined in \eqref{eq:lambda_ih8d} \\
$\mu(\bx)$ & the forcing function defined in \eqref{eq:GPL_function} \\
$\theta(S, \bx)$ & the proportion function defined in \eqref{eq:proportion_function}\\ 
\hline
$\cX^*$ & the set of global minimizers of $F$ \\
 $\cX$ & the set of vectors with nonzero $\lambda(\bx)$ \eqref{eq:set_Xalpha_proportion} \\
 \hline
$\lambda_{\min}(\bM)$ & the smallest eigenvalue of a square matrix $\bM$ \\
$\lambda_{\max}(\bM)$ & the largest eigenvalue of a square matrix $\bM$ \\
\hline
$\bM, L \bI$ & the matrices defining $\bM$-smoothness of $f$ \eqref{eq:Msmoothness} for smooth and non-smooth cases  \\
$L_\tau$  & the smoothness parameter defined as $\max_{S : |S| = \tau} \{\lambda_{\max}(\bM_S)\}$ \\ 
$\lambda_F, \lambda_f$ & strong convexity parameters of $F$ and $f$, respectively \\
\hline
$U_S(\bx, \bu)$ & the quadratic upper bound function at a given point $\bx$, defined in \eqref{def:US} \\
\hline
\end{tabular}
\caption{Notation Glossary.}
\label{tab:notation}
\end{table}

\end{document}